\let\th@plain=\undefined
\let\oldbackslash=\backslash
\renewcommand{\backslash}{\oldbackslash\mkern-3mu}
\newcommand\bigset[2]{\left\{\, #1
 \mathrel{\left| \vphantom {\left\{ #1 \mid #2 \right\} }
 \right.} #2 \,\right\} }
\newcommand{\oldsection}{}
\let\oldsection=\section
\newcommand{\newsection}[1]{\oldsection{\mathversion{bold}#1}}
\renewcommand{\section}{\@ifnextchar*{\oldsection}{\@ifnextchar[{\oldsection}{\newsection}}}
\newcommand{\mytheoremheaderfont}[1]{{\normalfont\bfseries{#1}}}
\newcommand{\optargfont}[1]{{\normalfont #1}}
\medbreak\item[\hskip\labelsep \mytheoremheaderfont{##1 ##2\@addpunct{.}}]}%
\medbreak\item[\hskip\labelsep \mytheoremheaderfont{##1 ##2 }\optargfont{(##3)\@addpunct{.}}]}
\theoremstyle{plain}
\newtheorem{thm}[equation]{Theorem}
\newtheorem{cor}[equation]{Corollary}
\newtheorem{prop}[equation]{Proposition}
\newtheorem{lem}[equation]{Lemma}
\crefname{lem}{Lemma}{Lemmas}
\crefname{cor}{Corollary}{Corollaries}
\global\theorembodyfont{\normalfont} 
\medbreak\item[\hskip\labelsep \mytheoremheaderfont{##1 ##2\@addpunct{.}}]}%
\medbreak\item[\hskip\labelsep \mytheoremheaderfont{##1 ##2 }\optargfont{(##3)\@addpunct{.}}]}
\theoremstyle{definition}
\newtheorem{rem}[equation]{Remark}
\newtheorem{notation}[equation]{Notation}
\newtheorem{defn}[equation]{Definition}
 \newtheorem{ack}{Acknowledgments\ignorespaces}[section]
\newcommand{\augment}{&}
\newcommand{\integer}{\mathbb{Z}}
\newcommand{\iso}{\cong}
 \newcommand{\Cay}{\mathop{\rm Cay}}
 \newcommand{\bad}{\textbf{{?}{?}{?}}}
\renewcommand{\pmod}[1]{\ (\mathop{\rm mod} #1)}
\newcommand{\normal}{\triangleleft}
\newcommand{\notnormal}{\mathrel{\not\hskip-2pt\normal}}
\DeclareMathOperator{\Aut}{Aut}
\newcommand{\pref}[1]{(\ref{#1})}
\newcommand{\fullcref}[2]{\cref{#1}\pref{#1-#2}}
\newcommand{\see}[1]{{\rm(}see~\ref{#1}{\rm)}}
\numberwithin{equation}{section}
 \newcounter{case}
 \newenvironment{case}[1][\unskip]{\refstepcounter{case}\bf
 \medskip \noindent Case \thecase\ #1. \it}{\unskip\upshape}
 \renewcommand{\thecase}{\arabic{case}}
 \newcounter{subcase}
 \newenvironment{subcase}[1][\unskip]{\refstepcounter{subcase}\bf
 \medskip \noindent \hskip\parindent Subcase \thesubcase\ #1. \it}{\unskip\upshape}
\numberwithin{subcase}{case}
\crefname{subcase}{Subcase}{Subcases}
 \newcounter{subsubcase}
 \newenvironment{subsubcase}[1][\unskip]{\refstepcounter{subsubcase}\bf
 \medskip \noindent \hskip2\parindent Subsubcase \thesubsubcase #1. \it}{\unskip\upshape}
\numberwithin{subsubcase}{subcase}
 \newcounter{subsubsubcase}
\newenvironment{subsubsubcase}[1][\unskip]{\refstepcounter{subsubsubcase}\bf
 \medskip \noindent \hskip3\parindent Subsubsubcase \thesubsubsubcase #1. \it
}{\unskip\upshape} \numberwithin{subsubsubcase}{subsubcase}
\crefname{subsubsubcase}{Subsubsubcase}{Subsubsubcases}
\newcounter{saveBibCtr}
\begin{document}

\begin{frontmatter}   

\titledata{Hamiltonian cycles in Cayley graphs whose order has few prime factors}	  
{}	        

\authordata{K.\,Kutnar}
{University of Primorska, FAMNIT, Glagolja\v ska 8, 6000 Koper, Slovenia} 
{Klavdija.Kutnar@upr.si}		   
{}	 

\authordata{D.\,Maru\v si\v c}	   
{University of Primorska, FAMNIT, Glagolja\v ska 8, 6000 Koper, Slovenia
\\ University of Ljubljana, PEF, Kardeljeva pl. 16, 1000 Ljubljana, Slovenia} 
{Dragan.Marusic@guest.arnes.si}
{}

\authordata{D.\,W.\,Morris}	   
{Department of Mathematics and Computer Science,
University of Lethbridge,
Lethbridge, Alberta, T1K~3M4, Canada}
{Dave.Morris@uleth.ca}
{}

\authordata{J.\,Morris}      
{Department of Mathematics and Computer Science,
University of Lethbridge,
Lethbridge, Alberta, T1K~3M4, Canada}
{Joy.Morris@uleth.ca}
{}

\authordata{P.\,\v Sparl}	   
{University of Ljubljana, PEF, Kardeljeva pl. 16, 1000 Ljubljana, Slovenia}
{Primoz.Sparl@pef.uni-lj.si}
{}

\keywords{Cayley graphs, hamiltonian cycles.}	      
\msc{05C25, 05C45}		     

\begin{abstract}
We prove that if $\Cay(G;S)$ is a connected Cayley graph with $n$ vertices, and the prime factorization of~$n$ is very small, then $\Cay(G;S)$ has a hamiltonian cycle. More precisely, if $p$, $q$, and~$r$ are distinct primes, then $n$ can be of the form $kp$ with $24 \neq k < 32$, or  of the form $kpq$ with $k \le 5$, or of the form $pqr$, or of the form $kp^2$ with $k \le 4$, or of the form $kp^3$ with $k \le 2$.
\end{abstract}

\end{frontmatter}   

\section{Introduction} \label{IntroSect}

\begin{defn}
Let $S$ be a subset of a finite group~$G$. The \emph{Cayley graph} $\Cay(G;S)$ is the graph whose vertices are the elements of~$G$, with an edge joining $g$ and~$gs$, for every $g \in G$ and $s \in S$.
\end{defn}

It was conjectured in the early 1970's that every connected Cayley graph has a hamiltonian cycle, but we are still nowhere near a resolution of this problem.
(See the surveys \cite{CurranGallian-survey,PakRadoicic-survey,WitteGallian-survey} for discussions of the progress that has been made.)
One of the purposes of this paper is to provide some evidence for the conjecture, by establishing that all Cayley graphs on groups of small order have hamiltonian cycles.
Our results are summarized in the following theorem:

\begin{thm} \label{Ham<100}
 Let $G$ be a finite group. Every connected Cayley graph on~$G$ has a
hamiltonian cycle if\/ $|G|$ has any of the following forms\/ {\upshape(}where
$p$, $q$, and~$r$ are distinct primes\/{\upshape):}
 \begin{enumerate}
 
 \item \label{Ham<100-kp}
 \newcounter{kp} \setcounter{kp}{\theenumi}
 $k p$, where $1 \le k < 32$, with $k \neq 24$, 
 
 \item \label{Ham<100-kpq}
 \newcounter{kpq} \setcounter{kpq}{\theenumi}
 $k p q$, where $1 \le k \le 5$,
 
 \item \label{Ham<100-pqr}
 \newcounter{pqr} \setcounter{pqr}{\theenumi}
 $p q r$,
 
 \item \label{Ham<100-kp2}
 \newcounter{kpsquared} \setcounter{kpsquared}{\theenumi}
 $k p^2$, where $1 \le k \le 4$, 
 
 \item \label{Ham<100-kp3}
 \newcounter{kpcubed} \setcounter{kpcubed}{\theenumi}
 $k p^3$, where $1 \le k \le 2$.
 

 \end{enumerate}
 \end{thm}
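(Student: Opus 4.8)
The plan is to induct on $n=|G|$, treating the five arithmetic forms simultaneously and reducing each instance either to a group of strictly smaller order of one of the admissible forms or to a short list of established results. Two reductions apply across all forms. If $G$ is abelian, the classical theorem of Chen--Quimpo---every connected Cayley graph on a nontrivial abelian group is Hamiltonian---finishes the job, so we may assume $G$ is nonabelian. If $n$ is a prime power, which covers $n=p$, $n=p^2$, and $n=p^3$ (the instances $k=1$ of the last two forms), we invoke the theorem that every connected Cayley graph on a group of prime-power order is Hamiltonian. After these, attention is restricted to nonabelian $G$ whose order has at least two distinct prime divisors.

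The central leverage is a Sylow counting argument on the largest prime divisor $p$ of $n$. Because the complementary factor is small in every form, the number of Sylow $p$-subgroups---which divides this factor and is congruent to $1$ modulo $p$---is forced to equal $1$ once $p$ is large enough, so the Sylow $p$-subgroup $P$ is normal in $G$; the finitely many cases in which all prime divisors are small have bounded order and are handled by direct, computer-assisted verification. Normality of $P$ has two uses. Since $G/P$ then has order at most the complementary factor, which is below $60$, and every group of order below $60$ is solvable, $G$ is solvable whenever $p$ is large; nonsolvable groups are thereby confined to bounded order, where the only admissible example is $A_5$, for which every connected Cayley graph is known to be Hamiltonian. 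Second, $P$ supplies the normal subgroup needed for the inductive descent.

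The descent itself uses the Factor Group Lemma: if $N \normal G$ is cyclic and $\Cay(G/N;\overline S)$ has a Hamiltonian cycle whose ordered product of labels---its voltage---generates $N$, then $\Cay(G;S)$ is Hamiltonian. For the form $kp$ one applies this with $N=P$ of order $p$. For $kp^2$ and $kp^3$, where $P$ need not be cyclic, one descends stepwise through a chain $1=N_0 \normal N_1 \normal \cdots \normal N_r=P$ of subgroups normal in $G$ with cyclic factors, invoking the lemma at each stage. The squarefree form $pqr$ is cleanest of all: a group of squarefree order is a $Z$-group, hence metacyclic with cyclic commutator subgroup, so the results of Durnberger, Maru\v si\v c, and Keating--Witte on groups with cyclic commutator subgroup apply at once; these same results also settle many residual instances of $kpq$, $kp^2$, and $kp^3$ after the Sylow reduction.

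I expect the voltage condition to be the main obstacle, for it is genuinely delicate: a Hamiltonian cycle that exists in the quotient need not have a voltage generating $N$, and one must then either modify the cycle or switch to a different cyclic normal subgroup, neither of which is always possible. What remains after all such reductions is a collection of groups admitting no suitable cyclic normal subgroup---characteristically those whose minimal normal subgroup is noncyclic elementary abelian---which resist every uniform argument and must be treated individually, by explicit constructions or by computer. For each form with bounded cofactor this collection is a finite, order-bounded list, so the process terminates. Finally, the exclusion of $k=24$ in the form $kp$ marks exactly where the method breaks down: groups of order $24p$ include examples for which no proof of Hamiltonicity is currently known.
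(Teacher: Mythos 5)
Your reduction framework (the abelian case via Chen--Quimpo, prime powers via Witte's theorem, Sylow counting to force a normal Sylow $p$-subgroup for the largest prime, solvability outside $A_5$, and descent through cyclic normal subgroups via the Factor Group Lemma) is essentially the skeleton the paper uses, and you correctly identify the voltage condition as the delicate point. But the proposal has a fatal gap precisely there: the residual groups that resist this descent do \emph{not} form a finite, order-bounded list amenable to computer verification. They include infinite families parametrized by the prime(s), for example $\integer_3 \ltimes (\integer_p \times \integer_p)$ of order $3p^2$, $D_{2p} \times D_{2p}$ and $\integer_4 \ltimes (\integer_p \times \integer_p)$ of order $4p^2$, $\integer_2 \ltimes P$ for $P$ elementary abelian or nonabelian of order $p^3$, $D_8 \ltimes \integer_p$ with noncyclic kernel of order $8p$, and $A_4 \ltimes \integer_p$ of order $12p$. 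In each of these the minimal normal subgroup on which the commutator subgroup sits is noncyclic (or the needed voltage fails), and one must produce an explicit hamiltonian cycle construction valid for \emph{all} $p$ --- such as the row-by-row traversal of the quotient multigraph $\langle s\rangle \backslash \Cay(G;S)$ for $\integer_3 \ltimes \integer_p^2$, or the reduction of one subcase of $D_{2p}\times D_{2q}$ to Alspach's classification of hamiltonian generalized Petersen graphs. This is the bulk of the actual proof and is entirely absent from, indeed explicitly deferred by, your plan.

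A second, smaller error: your treatment of the squarefree case $pqr$ appeals to results on groups with cyclic commutator subgroup, but the Keating--Witte theorem requires $[G,G]$ to be a cyclic $p$-\emph{group}, i.e.\ cyclic of prime-power order. A group of order $pqr$ can have $[G,G] \iso \integer_{qr}$, which is cyclic but not of prime-power order, and hamiltonicity for arbitrary cyclic commutator subgroups remains open. So this case is not settled ``at once''; one must analyze $G = \integer_p \ltimes (\integer_q \times \integer_r)$ directly and exhibit a hamiltonian cycle in the quotient whose voltage generates all of $\integer_{qr}$, which is what the paper does.
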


 \begin{rem}[\cite{Witte-pn}] \label{pk}
 It is also known that Cayley graphs with $p^k$ vertices all have hamiltonian cycles.
 \end{rem}

 This work began in the 1980's as an undergraduate research project by D.\,Jungreis and E.\,Friedman at the University of Minnesota, Duluth, under the supervision of J.\,A.\,Gallian, but their results \cite{JungreisFriedman} were never published. (This paper is a revision and extension of the students' work; we include statements and proofs of their main results.)  
 We consider only Cayley graphs in this paper; see \cite{KutnarSparl} for references to analogous work on hamiltonian cycles in more general vertex-transitive graphs with a small number of vertices.

It was originally expected that the numerous available methods would easily prove that every Cayley graph on any group of order less than, say, $100$ has a hamiltonian cycle. Unfortunately, a major lesson of this work is that such an expectation is wildly incorrect.
Namely, although the results here were not obtained easily, they do not even include all of the orders up to $75$. More precisely, as can be seen from \cref{To120}, combining \cref{Ham<100} with \cref{pk} deals with all orders less than $120$, \emph{except:}
 \begin{itemize}
 \item $72 = 2^3 \cdot 3^2 = 8 p^2$ or $24p$,
 \item $96 = 2^5 \cdot 3 = 32 p$,
 \item $108 = 2^2 \cdot 3^3 = 36p$ or $4p^3$,
 \item $120 = 2^3 \cdot 3 \cdot 5 = 24 p$.
 \end{itemize}
In fact, the situation is even worse than this list would seem to indicate, because the cases $k = 16$, $k = 27$, and $k = 30$ of \fullcref{Ham<100}{kp} are not proved here: they were treated in the separate papers \cite{CurranMorris2-16p,GhaderpourMorris-27p,GhaderpourMorris-30p} after a preprint of this paper was released. 

\begin{figure}[ht]
\begin{align*}
 1 & \augment 
 21 &= 3p \augment
 41 &= p \augment
 61 &= p \augment
 81 &= p^k \augment
 101 &= p \augment
 \\
 2 &= p \augment
 22 &= 2p \augment
 42 &= 6p \augment
 62 &= 2p \augment
 82 &= 2p \augment
 102 &= 6p \augment
 \\
 3 &= p \augment
 23 &= p \augment
 43 &= p \augment
 63 &= 9p  \augment
 83 &= p \augment
 103 &= p \augment
 \\
 4 &= p^k \augment
 24 &= 8p \augment
 44 &= 4p \augment
 64 &= p^k \augment
 84 &= 12p \augment
 104 &= 8p \augment
 \\
 5 &= p \augment
 25 &= p^2 \augment
 45 &= 9p \augment
 65 &= 5p \augment
 85 &= 5p \augment
 105 &= 15p \augment
 \\
 6 &= 2p \augment
 26 &= 2p \augment
 46 &= 2p \augment
 66 &= 6p \augment
 86 &= 2p \augment
 106 &= 2p \augment
 \\
 7 &= p \augment
 27 &= p^k \augment
 47 &= p \augment
 67 &= p \augment
 87 &= 3p \augment
 107 &= p \augment
 \\
 8 &= p^k \augment
 28 &= 4p \augment
 48 &= 16p \augment
 68 &= 4p \augment
 88 &= 8p \augment
 108 &= \bad \augment
 \\
 9 &= p^k \augment
 29 &= p \augment
 49 &= p^k \augment
 69 &= 3p \augment
 89 &= p \augment
 109 &= p \augment
 \\
 10 &= 2p \augment
 30 &= 6p \augment
 50 &= 2p^2 \augment
 70 &= 2pq \augment
 90 &= 18p \augment
 110 &= 10p \augment
 \\
 11 &= p \augment
 31 &= p \augment
 51 &= 3p \augment
 71 &= p \augment
 91 &= pq \augment
  111 &= 3p \augment
\\
 12 &= 4p \augment
 32 &= p^k \augment
 52 &= 4p \augment
 72 &= \bad \augment
 92 &= 4p \augment
  112 &= 16p \augment
\\
 13 &= p \augment
 33 &= 3p \augment
 53 &= p \augment
 73 &= p \augment
 93 &= 3p \augment
 113 &= p \augment
\\
 14 &= 2p \augment
 34 &= 2p \augment
 54 &= 2p^3 \augment
 74 &= 2p \augment
 94 &= 2p \augment
 114 &= 6p \augment
 \\
 15 &= 3p \augment
 35 &= pq \augment
 55 &= pq \augment
 75 &=  3p^2 \augment
 95 &= 5p \augment
 115 &= 5p \augment
 \\
 16 &= p^k \augment
 36 &= 4p^2 \augment
 56 &= 8p \augment
 76 &= 4p \augment
 96 &= \bad \augment
 116 &= 4p \augment
 \\
 17 &= p \augment
 37 &= p \augment
 57 &= 3p \augment
 77 &= pq \augment
 97 &= p \augment 
 117 &= 9p \augment
 \\
 18 &= 2p^2 \augment
 38 &= 2p \augment
 58 &= 2p \augment
 78 &= 6p \augment
 98 &= 2p^2 \augment
 118 &= 2p \augment
 \\
 19 &= p \augment
 39 &= 3p \augment
 59 &= p \augment
 79 &= p \augment
 99 &= 9p \augment
 119 &= 7p \augment
 \\
 20 &= 4p \augment
 40 &= 8p \augment
 60 &= 12p \augment
 80 &= 16p \augment
 100 &= 4p^2 \augment
 120 &= \bad \augment
 \end{align*}
 \caption{Factorizations of orders up to 120.}
 \label{To120}
 \end{figure}

 \subsection*{Outline of the paper.}
 Most of the cases of \cref{Ham<100} are known (including all of the cases where $k = 1$). For example, C.\,C.\,Chen and N.\,Quimpo \cite{ChenQuimpo-pq} proved that Cayley graphs of order $pq$ are hamiltonian (in fact, edge-hamiltonian), and D.\,Li \cite{Li-pqr} proved that Cayley graphs of order $pqr$ are hamiltonian. (However, the latter result is in Chinese, so we provide a proof.) The following list of the paper's sections enumerates the main cases that need to be considered.
\begin{align*} \itemsep=-2pt
&\text{\S\ref{PrelimSect} {Preliminaries}}
&&\text{\S\ref{4p2Sect} {Groups of order $4p^2$}}
&&\text{\S\ref{2p3Sect} {Groups of order $2p^3$} }
\\
&\text{\S\ref{8p-section} {Groups of order $8p$}}
&&\text{\S\ref{pqrSect} {Groups of order $pqr$} }
&&\text{\S\ref{18pSect} {Groups of order $18p$}}
\\
&\text{\S\ref{3p2Sect} {Groups of order $3p^2$}}
&&\text{\S\ref{4pqSect} {Groups of order $4pq$}}
\end{align*}

\section{Preliminaries} \label{PrelimSect}

\subsection{Outline of the proof of \cref{Ham<100}:}
Here is a description of how the results of this paper combine to prove \cref{Ham<100}.

\begin{enumerate} \renewcommand{\labelenumi}{(\theenumi)}

\setcounter{enumi}{\thekp}\addtocounter{enumi}{-1}
\item
If $k \in \{2, 3, 5, 6, 7, 10, 11, 13, 14, 17, 19, 22, 23, 26, 29, 31\}$, then $k$ is either prime or twice a prime, so $kp$ is of the form $pq$, $2pq$, $p^2$, or $2p^2$. These cases are treated below, in 
\pref{HowToProve-kpq-1}, \pref{HowToProve-kpq-2}, 
\pref{HowToProve-kp2-1}, and~\pref{HowToProve-kp2-2},
respectively,
so we need only consider the other values of~$k$.
Also, we note that the proofs of 
\pref{Ham<100-kpq}--\pref{Ham<100-kp3} 
make no use of \pref{Ham<100-kp}, other than the cases $4p$ and $8p$, so we are free to employ any and all other parts of the theorem in establishing the cases of~\pref{Ham<100-kp} (other than $4p$ and $8p$).

 \begin{enumerate}
 \item[$1p$:] Groups of prime order are abelian, so \cref{abel} applies.
 \item[$4p$:] See \cref{4p}. 
 \item[$8p$:] See \cref{8p}.
 \item[$9p$:] \Cref{p2q-easy} applies unless $p = 2$. If $p = 2$, then $|G|$ is of the form $2p^2$.
 \item[$12p$:] $|G|$ is of the form $8p$ (if $p = 2$) or $4p^2$ (if $p = 3$) or $4pq$ (if $p > 3$).
 \item[$15p$:] $|G|$ is of the form $3p^2$ (if $p = 5$) or $3pq$ (otherwise).
 \item[$16p$:] See \cite{CurranMorris2-16p}.
 \item[$18p$:] See \cref{18p}. 
 \item[$20p$:] $|G|$ is of the form $4p^2$ (if $p = 5$) or $4pq$ (otherwise).
 \item[$21p$:] $|G|$ is of the form $3p^2$ (if $p = 7$) or $3pq$ (otherwise).
 \item[$25p$:] \Cref{p2q-easy} applies unless $p \in \{2,3\}$. In the exceptional cases, $|G|$ is of the form $kp^2$ with $1 \le k \le 4$.
 \item[$27p$:] See \cite{GhaderpourMorris-27p}.
 \item[$28p$:] $|G|$ is of the form $4p^2$ (if $p = 7$) or $4pq$ (otherwise).
 \item[$30p$:] See \cite{GhaderpourMorris-30p}.
\end{enumerate}

\setcounter{enumi}{\thekpq}\addtocounter{enumi}{-1}
 \item
 Assume $|G| = kpq$ with $1 \le k \le 5$.
 	\begin{enumerate}
	\item \label{HowToProve-kpq-1}
	If $k = 1$, then $[G,G]$ is cyclic of prime order, so \cref{KeatingWitte} applies.
	\item  \label{HowToProve-kpq-2}
	If $k = 2$, see \cref{2pq}. 
	\item If $k = 3$, see \cref{3pq}. 
	\item If $k = 4$, see \cref{4pq}. 
	\item If $k = 5$, see \cref{5pq}. 
	\end{enumerate}

\setcounter{enumi}{\thepqr}\addtocounter{enumi}{-1}
 \item
 Assume $|G| = pqr$. See \cref{pqr} (or \cite{Li-pqr}). 

\setcounter{enumi}{\thekpsquared}\addtocounter{enumi}{-1}
\item
Assume $|G| = k p^2$ with $1 \le k \le 4$.
	\begin{enumerate}
	\item \label{HowToProve-kp2-1}
	 If $k = 1$, then $|G| = p^2$, so $G$ is abelian. Hence, \cref{abel} applies.
	\item  \label{HowToProve-kp2-2}
	If $k = 2$, see \cref{2p2}. 
	\item  If $k = 3$, see \cref{3p2}. 
	\item If $k = 4$, see \cref{4p2}. 
	\end{enumerate}

\setcounter{enumi}{\thekpcubed}\addtocounter{enumi}{-1}
 \item
 Assume $|G| = kp^3$ with $1 \le k \le 2$. 
 	\begin{enumerate}
	\item If $k = 1$, then $|G| = p^3$ is a prime power, so \cref{pk} applies.
	\item If $k = 2$, see \cref{2p3}. 
	\end{enumerate}
 
%
 \end{enumerate}

\subsection{Some basic results on Cayley graphs of small order}

It is very easy to see that Cayley graphs on abelian groups are hamiltonian (in fact, they are edge-hamiltonian \cite{ChenQuimpo-pq} and are usually hamiltonian connected \cite{ChenQuimpo-hamconn}):

\begin{lem}[\cite{ChenQuimpo-hamconn}] \label{abel}
If $G$ is abelian, then every connected Cayley graph on~$G$ has a hamiltonian cycle.
\end{lem}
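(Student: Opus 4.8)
The plan is to prove the statement by induction on the size of a connecting set, using the standard coset-by-coset ``boustrophedon'' (back-and-forth) construction. First I would reduce to the case where $S$ is symmetric, does not contain the identity, and generates~$G$, with $|G| \ge 3$. A key realization is that the naive induction does not go through for Hamiltonian \emph{cycles} alone: in order to splice smaller pieces together, I would actually need the smaller Cayley graph to possess Hamiltonian \emph{paths} between suitably prescribed pairs of endpoints. Thus I would strengthen the inductive hypothesis to a Hamiltonian-connectedness statement (or, in the bipartite case, Hamiltonian-laceability) --- essentially the content of \cite{ChenQuimpo-hamconn} --- and carry that stronger statement through the induction.

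For the inductive step, fix $s \in S$ and set $H = \langle S \setminus \{s, s^{-1}\} \rangle$. Since $G$ is abelian, $H \normal G$ and the quotient $G/H$ is cyclic, generated by the image of~$s$; let $t$ be the order of $sH$ in $G/H$, so that $G$ is the disjoint union of the cosets $H, Hs, \dots, Hs^{t-1}$. By the (strengthened) inductive hypothesis, $\Cay(H; S \cap H)$ has a Hamiltonian path $P$ joining a chosen pair of endpoints $u, v$. Translating $P$ into each coset $Hs^j$ and joining consecutive cosets by a single $s$-edge, while traversing the cosets alternately forwards and backwards, produces a spanning subgraph that is a single Hamiltonian path through all of~$G$.

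The main obstacle --- and the step I expect to require the most care --- is closing this path into a cycle. After zig-zagging through all $t$ cosets, the final $s$-edge returns from $Hs^{t-1}$ into the starting coset $H$, but at the vertex shifted by $s^t$, which lies in~$H$ yet need not be the identity. Making the two loose ends coincide therefore forces a compatibility condition between the endpoints $u, v$ of~$P$ and the element $s^t \in H$; this is exactly why controllable Hamiltonian paths (rather than a single cycle) are needed in~$H$, and it is where the induction genuinely uses the strengthened hypothesis. I would handle the generic case by choosing $u$ and $v$ so that the return shift lands correctly, and treat separately the parity obstruction that arises when $\Cay(G;S)$ is bipartite, where a laceability statement matching endpoints across the two parts replaces plain connectedness. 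Finally I would dispose of the small base cases --- $H$ trivial, $t \le 2$, or $G$ cyclic --- by direct inspection.
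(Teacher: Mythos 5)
The paper offers no proof of this lemma at all: it is imported verbatim from \cite{ChenQuimpo-hamconn}, so there is no in-paper argument to compare yours against. Your proposal is essentially a reconstruction of the Chen--Quimpo proof itself, and its architecture is right: you correctly recognize that a naive induction on hamiltonian cycles fails, that the inductive hypothesis must be strengthened to hamiltonian-connectedness (laceability in the bipartite case), and that the crux is closing the boustrophedon path, i.e.\ matching the endpoints $u,v$ of the path in $H=\langle S\setminus\{s,s^{-1}\}\rangle$ against the return shift $s^{t}\in H$. What the sketch defers, however, is exactly where all the work lives: to carry the strengthened hypothesis through the induction you must show that $\Cay(H;S\cap H)$ admits a hamiltonian path between \emph{every} admissible pair of endpoints (not merely one convenient pair), and the parity analysis in the bipartite case together with the degenerate cases ($|H|\le 2$, $t\le 2$, valency~$2$) is the substance of \cite{ChenQuimpo-hamconn}; ``choose $u$ and $v$ so that the return shift lands correctly'' is at this point an assertion rather than an argument. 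If you only need the lemma as stated, a shorter route is available and is in effect used later in the paper in \fullcref{NormalEasy}{Z}: in an abelian group every $s\in S$ is central and $\langle s\rangle\normal G$, so a hamiltonian cycle in $\Cay(G/\langle s\rangle;S)$ yields a spanning subgraph of $\Cay(G;S)$ isomorphic to the Cartesian product $P_{n}\times C_{|s|}$, which is hamiltonian; inducting on $|G|$ this way trades the hamiltonian-connectedness machinery for a little bookkeeping when $|s|=2$ or $|G/\langle s\rangle|\le 2$.
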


The following generalization handles many groups of small order:

\begin{thm}[Keating-Witte \cite{KeatingWitte}] \label{KeatingWitte}
 If the commutator subgroup~$[G,G]$ of~$G$ is a cyclic $p$-group, then
every connected Cayley graph on~$G$ has a hamiltonian cycle.
 \end{thm}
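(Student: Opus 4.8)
The plan is to combine the fact that abelian Cayley graphs are hamiltonian (\cref{abel}) with the standard \emph{factor group lemma} (see \cite{WitteGallian-survey}). Write $N = [G,G]$, which is a cyclic $p$-group; we may assume $N \neq 1$, since otherwise $G$ is abelian and \cref{abel} applies directly. The quotient $A = G/N$ is abelian, so if $\bar S$ denotes the image of $S$ in $A$, then by \cref{abel} the connected Cayley graph $\Cay(A;\bar S)$ has a hamiltonian cycle. Traversing such a cycle records a sequence of generators $(\bar s_1,\dots,\bar s_m)$, with $m = |A|$, and the corresponding lift $(s_1,\dots,s_m)$ runs along a path in $\Cay(G;S)$ whose endpoint is the \emph{voltage} $\pi = s_1 s_2 \cdots s_m \in N$. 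The factor group lemma says that if $\pi$ generates the cyclic normal subgroup~$N$, then repeating the sequence $(s_1,\dots,s_m)$ a total of $|N|$ times traces a hamiltonian cycle in $\Cay(G;S)$. Thus everything reduces to finding a hamiltonian cycle in $\Cay(A;\bar S)$ whose voltage generates~$N$.

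Because $N$ is cyclic of prime-power order, its subgroups form a chain whose unique maximal member is $N^p = \{\, x^p : x \in N\,\}$, so $\pi$ generates $N$ exactly when $\pi \notin N^p$. First I would check that some commutator already generates~$N$. Indeed, $N$ is generated by the conjugates of the commutators $[s,t]$ with $s,t \in S$; since every automorphism of~$N$ preserves the characteristic subgroup~$N^p$, if every $[s,t]$ lay in $N^p$ then so would all of their conjugates, forcing $N \leq N^p$, which is absurd. Hence there exist $s_0,t_0 \in S$ with $[s_0,t_0] \notin N^p$, that is, $[s_0,t_0]$ generates~$N$.

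The crux is to arrange that the voltage can be shifted by this generating commutator. The key step is a local surgery. Using that abelian Cayley graphs are in fact edge-hamiltonian \cite{ChenQuimpo-pq}, I would select a hamiltonian cycle of $\Cay(A;\bar S)$ that contains a prescribed pair of edges labelled $\bar s_0$ and~$\bar t_0$; near these edges the commuting pair $\bar s_0,\bar t_0$ bounds a $4$-cycle in~$A$, and rerouting the hamiltonian cycle across this $4$-cycle leaves it hamiltonian while multiplying the voltage by $[s_0,t_0]^{\pm 1}$. Since $[s_0,t_0]$ generates~$N$, this move changes $\pi$ modulo~$N^p$; choosing the orientation of the reroute appropriately therefore drives the voltage out of~$N^p$, so that it generates~$N$, and the factor group lemma finishes the argument.

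I expect the voltage-control step to be the real obstacle. Exhibiting \emph{some} hamiltonian cycle in the abelian quotient is routine, but producing one that simultaneously carries the configuration needed for the commutator surgery, and verifying that the surgery both preserves hamiltonicity and alters the voltage by exactly the intended commutator, is where the difficulty concentrates. A cleaner way to manage this may be to replace the single passage to $A$ by an induction on $|N| = p^k$, lifting across one socle subgroup $Z \leq N$ of order~$p$ at a time: at each stage $[G/Z,\, G/Z]$ is again cyclic of smaller order, and one needs only a \emph{nontrivial} voltage in $\integer_p$ rather than a generator of $\integer_{p^k}$. The price is that the inductive statement must be strengthened to keep track of which voltages are realizable, and organizing that bookkeeping is the technical heart of the proof.
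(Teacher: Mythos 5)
First, note that the paper does not prove this statement: it is quoted from Keating and Witte \cite{KeatingWitte}, so there is no in-paper argument to compare yours against. Judged on its own, your outline correctly isolates the two standard ingredients (a hamiltonian cycle in the abelian quotient $G/N$ via \cref{abel}, and the Factor Group Lemma \cref{FGL} applied to the cyclic normal subgroup $N=[G,G]$), and your observation that some commutator $[s_0,t_0]$ of generators must lie outside $N^p$ is correct, since $N^p$ is characteristic in $N$ and hence normal in~$G$.

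The genuine gap is the ``surgery'' step, and it is not merely a technicality. Rerouting a hamiltonian cycle of $\Cay(G/N;S)$ across the $4$-cycle $v,\ v\bar s_0,\ v\bar s_0\bar t_0,\ v\bar t_0$ does not preserve hamiltonicity: replacing the subpath from $v$ through $v\bar s_0$ to $v\bar s_0\bar t_0$ by the path through $v\bar t_0$ revisits the vertex $v\bar t_0$, which the cycle already passes through elsewhere, and omits $v\bar s_0$ entirely. The correct local exchange --- deleting two parallel $\bar s_0$-edges of that $4$-cycle and inserting the two $\bar t_0$-edges --- produces a $2$-factor that is in general a disjoint union of \emph{two} cycles rather than a single hamiltonian cycle; whether it reconnects depends on the global structure of the cycle, not on anything local. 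Edge-hamiltonicity of abelian Cayley graphs lets you force one prescribed edge into a hamiltonian cycle, but not the two-edge configuration needed here, and it gives no control over whether the exchange reconnects. This is exactly why Keating--Witte is a substantial theorem: their proof (and the earlier special cases of Maru\v si\v c and Durnberger for $|N|=p$) replaces the single quotient-and-lift by an induction that constructs hamiltonian cycles with explicitly controlled voltages --- the ``bookkeeping'' you defer to in your final paragraph. As written, the proposal reduces the theorem to an unproved claim that is at least as hard as the theorem itself.
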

 
 For ease of reference, we record a well-known (and easy) consequence of this theorem.

\begin{cor} \label{p2q-easy}
 If\/ $|G| = p^2 q$, where $p$ and~$q$ are primes with $p^2 \not\equiv 1 \pmod{q}$, then every
connected Cayley graph on~$G$ has a hamiltonian cycle.
 \end{cor}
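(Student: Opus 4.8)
The plan is to deduce the corollary directly from \cref{KeatingWitte} by showing that the arithmetic hypothesis forces the commutator subgroup $[G,G]$ to be either trivial or cyclic of prime order. First I would dispose of the degenerate case: if $p = q$ then $|G| = p^3$ is a prime power, so \cref{pk} applies, and we may assume $p \ne q$ from now on.

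The key step is to make a Sylow $q$-subgroup normal. Let $Q$ be a Sylow $q$-subgroup, so $|Q| = q$ and $Q$ is cyclic, and let $n_q$ be the number of such subgroups. Sylow's theorems give $n_q \mid p^2$ and $n_q \equiv 1 \pmod q$, so $n_q \in \{1, p, p^2\}$. Now $n_q = p^2$ would give $p^2 \equiv 1 \pmod q$ outright, while $n_q = p$ would give $p \equiv 1 \pmod q$ and hence again $p^2 \equiv 1 \pmod q$; both contradict the hypothesis $p^2 \not\equiv 1 \pmod q$. Therefore $n_q = 1$, i.e.\ $Q \normal G$.

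Once $Q$ is normal, the rest is immediate. Since $|G/Q| = p^2$, the quotient $G/Q$ has order $p^2$ and is therefore abelian, so $[G,G] \le Q$. As $|Q| = q$ is prime, $Q$ has no proper nontrivial subgroups, whence $[G,G]$ is either trivial or all of $Q$; in either case it is a cyclic $q$-group. If $[G,G]$ is trivial then $G$ is abelian and \cref{abel} applies, and otherwise $[G,G] = Q$ is a cyclic $q$-group and \cref{KeatingWitte} applies (with the prime taken to be $q$). Either way every connected Cayley graph on $G$ has a hamiltonian cycle.

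I do not expect any genuine obstacle here: the argument is elementary, and the only place the hypothesis is used is the Sylow count that pins down $n_q = 1$. The one point worth stating carefully is that $n_q = p$ is ruled out not directly but because it propagates to $p^2 \equiv 1 \pmod q$, which is exactly what the hypothesis excludes.
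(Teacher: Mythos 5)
Your proof is correct and follows essentially the same route as the paper: dispose of $p=q$ via the prime-power result, use the Sylow count (which the paper delegates to \cref{NoMod1->Normal}) to get $Q \normal G$, and then conclude that $[G,G] \le Q$ is cyclic of order $q$ or $1$ so that \cref{KeatingWitte} applies. The only difference is that you spell out the divisor count $n_q \in \{1,p,p^2\}$ explicitly rather than citing the lemma.
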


\begin{proof}
We may assume $p \neq q$, for otherwise $|G| = p^3$ is a prime power, so \cref{pk} applies.

 Let $Q$ be a Sylow $q$-subgroup of~$G$. From Sylow's Theorem \pref{NoMod1->Normal}, we know that $Q$ is normal in~$G$. The quotient group $G/Q$,
being of order~$p^2$, must be abelian. Therefore $[G,G] \subset Q$ is cyclic of
order~$q$ or~$1$, so \cref{KeatingWitte} applies.
 \end{proof}

The proof of \cref{pk} actually yields the following stronger result:

\begin{cor}[{}{\cite[Cor.~3.3]{Morris-2genNilp}}] \label{pkSubgrp}
  Suppose
 \begin{itemize}
 \item $S$ is a generating set of~$G$,
 \item $N$ is a normal $p$-subgroup of~$G$,
 and
 \item $s t^{-1} \in N$, for all $s,t \in S$.
 \end{itemize}
 Then $\Cay(G;S)$ has a hamiltonian cycle.
 \end{cor}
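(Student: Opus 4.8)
Here is the statement to be proved is \cref{pkSubgrp}; the following is a plan of attack.

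The plan is to exploit the congruence hypothesis $st^{-1}\in N$, which forces all of~$S$ into a single coset of~$N$ and makes the quotient cyclic. Fixing $s_0\in S$, every $s\in S$ lies in $Ns_0$, and since $N$ is normal each such $s$ projects to the same element $\overline{s_0}$ of $G/N$. As $S$ generates~$G$, this single element generates $G/N$, so $G/N=\langle\overline{s_0}\rangle$ is cyclic. Consequently, via the projection $G\to G/N$, the graph $\Cay(G;S)$ is a cover of the single cycle $\Cay(G/N;\{\overline{s_0}\})$ with fibre~$N$, and the whole task reduces to lifting a hamiltonian cycle through a $p$-group of voltages. I would run the argument by induction on $|N|$. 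If $N$ is trivial then $S=\{s_0\}$ and $G=\langle s_0\rangle$ is cyclic, so $\Cay(G;S)$ is a single $|G|$-cycle and there is nothing to prove (this is also a special case of \cref{abel}).

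For the inductive step I would pass to a quotient. Choose $Z$ to be a minimal nontrivial subgroup of the centre $Z(N)$ that is normal in~$G$; since $pZ$ (written additively) is again normal in~$G$, minimality forces $pZ=0$, so $Z$ is elementary abelian, and because $Z\le Z(N)$ the conjugation action of~$G$ on~$Z$ factors through the cyclic group $G/N$. The hypotheses of the corollary are inherited by $G/Z$: the image of~$S$ still generates, $N/Z$ is still a normal $p$-subgroup, and the congruence condition survives. Hence, by induction, $\Cay(G/Z;\overline{S})$ has a hamiltonian cycle~$\overline{C}$.

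It then remains to lift $\overline{C}$, and this is where the standard Factor Group Lemma enters. Lifting each edge of $\overline{C}$ to the corresponding generator in~$S$ and tracing the resulting walk from the identity returns to an element $v\in Z$, the \emph{voltage} of~$\overline{C}$; the full preimage of $\overline{C}$ in $\Cay(G;S)$ is a disjoint union of cycles, one for each coset of $\langle v\rangle$ in~$Z$. When $Z$ is cyclic of order~$p$ and $v$ generates~$Z$, traversing $\overline{C}$ exactly $|Z|$ times produces a single hamiltonian cycle and the step is finished.

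The hard part is precisely the obstruction to this clean outcome. The voltage~$v$ may be trivial, and, more seriously, $Z$ need not be cyclic: a cyclic group can act irreducibly on an $\mathbb{F}_p$-space with no one-dimensional invariant subspace, so a minimal normal~$Z$ inside $Z(N)$ can have rank greater than one, and then no single voltage can ever generate it. To overcome this I would prove (or invoke, as the technical core of \cite{Morris-2genNilp}, in the spirit of Witte's proof of \cref{pk}) a merging lemma: because $Z$ is central in~$N$ and all generators lie in one coset of~$N$, two sheets of the lifted configuration that are adjacent along an edge of~$\overline{C}$ can be spliced together by rerouting a single pair of edges, with the effect of adding a prescribed element of~$Z$ to the aggregate voltage. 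Iterating this across suitably chosen edges lets one realise a set of voltages that generates all of the elementary abelian group~$Z$, so the disjoint lifts coalesce into one hamiltonian cycle. Making the splicing simultaneously \emph{valid} (each rerouting must keep the result a union of cycles covering every vertex) and \emph{productive} (the accumulated voltages must genuinely span~$Z$) is the crux of the argument, and the step I expect to demand the most care.
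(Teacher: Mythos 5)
The paper does not actually prove this statement: it is imported from \cite[Cor.~3.3]{Morris-2genNilp}, with only the remark that the proof of \cref{pk} yields it, so there is no internal argument to compare yours against. Your reduction is the correct skeleton of that external proof: since every element of~$S$ lies in the single coset $Ns_0$, the quotient $G/N$ is cyclic; one inducts through a minimal $G$-normal subgroup $Z \le Z(N)$, which is elementary abelian and on which conjugation factors through the cyclic group $G/N$; and the Factor Group Lemma disposes of the case where the voltage of the lifted cycle generates~$Z$.

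As a self-contained proof, however, your proposal has a genuine gap, and it sits exactly where you flagged it. The ``merging lemma'' is not a routine splicing argument: rerouting a pair of edges of the lifted configuration does not let you add an \emph{arbitrary prescribed} element of~$Z$ to the aggregate voltage --- the achievable increments are particular differences and commutators dictated by the group structure (note that for $s,t \in S$ you only know $st^{-1} \in N$, not $st^{-1} \in Z$, so swapping generators perturbs the voltage by elements of~$N$ whose images in~$Z$ must then be controlled). Proving that the realizable voltages form a coset of a subgroup of~$Z$ large enough to guarantee a generating one --- the arc-forcing analysis in Witte's proof of \cref{pk} --- is the entire technical content of the result, and invoking ``the technical core of \cite{Morris-2genNilp}'' to finish is circular, since that core \emph{is} the statement being proved. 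Your write-up is therefore an accurate road map to the cited proof rather than a proof; to make it stand alone you would need to carry out that voltage-spanning argument in full.
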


\subsection{Factor Group Lemma}

When proving the various parts of \cref{Ham<100}, we will implicitly assume, by induction on~$|G|$, that if $N$ is any nontrivial, normal subgroup of~$G$, then every connected Cayley graph on $G/N$ has a hamiltonian cycle. (Similarly, we also assume that if $H$ is any proper subgroup of~$G$, then every connected Cayley graph on $H$ has a hamiltonian cycle.) Thus it is very useful to know when we can lift hamiltonian cycles from a quotient graph to the original Cayley graph. Here are a few well-known results of this type.

\begin{notation}
For $s_1,s_2,\ldots,s_n \in S \cup S^{-1}$, we use 
	$$ (s_1,s_2,s_3,\ldots,s_n) $$
to denote the walk in $\Cay(G;S)$ that visits (in order) the vertices
	$$ e, s_1, s_1s_2, s_1s_2s_3, \ldots, s_1s_2 \cdots s_n .$$
Also, 
	\begin{itemize}
	\item $(s_1,s_2,s_3,\ldots,s_n)^k$ denotes the walk that is obtained from the concatenation of $k$~copies of $(s_1,s_2,s_3,\ldots,s_n)$, 
	and 
	\item $(s_1,s_2,s_3,\ldots,s_n)\#$ denotes the walk  $(s_1,s_2,s_3,\ldots,s_{n-1})$ that is obtained by deleting the last term of the sequence.
	\end{itemize}
\end{notation}

The following observation is elementary.

\begin{lem} \label{FGL(notnormal)}
 Suppose
 \begin{itemize}
 \item $S$ is a generating set of~$G$,
 \item $H$ is a cyclic subgroup of~$G$, with index $|G:H| = n$,
 \item $s_1,s_2,\ldots,s_n$ is a sequence of $n$ elements of $S \cup S^{-1}$, such that 
 	\begin{itemize}
	\item the elements $e, s_1, s_1s_2, s_1s_2s_3, \ldots, s_1s_2\cdots s_{n-1}$ are all in different right cosets of~$H$,
	and
	\item the product $s_1s_2s_3\cdots s_n$ is a generator of~$H$.
	\end{itemize}
 \end{itemize}
 Then $(s_1,\ldots,s_n)^{|H|}$ is a hamiltonian cycle in $\Cay(G;S)$.
 \end{lem}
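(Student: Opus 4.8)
The plan is to verify directly that the walk $(s_1,\ldots,s_n)^{|H|}$ traces out a hamiltonian cycle by tracking which coset of~$H$ each partial product lands in, and then which specific element of each coset is visited on successive passes. The key observation is that $H$ need not be normal, so right cosets are the natural bookkeeping object: the group acts on the right, and the walk notation records products $s_1s_2\cdots s_i$ read left to right, so the prefix $s_1\cdots s_i$ determines a coset $H(s_1\cdots s_i)$ only if we are careful about sides. I would first set $c = s_1 s_2 \cdots s_n$ and note that the hypothesis says $c$ is a generator of the cyclic group~$H$, so $H = \langle c \rangle$ has order exactly $|H|$ and $c^{|H|} = e$; hence the concatenated walk $(s_1,\ldots,s_n)^{|H|}$ is closed, returning to the identity after $n|H|$ steps.

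The main step is to show the walk visits all $n|H| = |G|$ vertices without repetition. First I would treat a single pass: by hypothesis the $n$ elements $e, s_1, s_1s_2, \ldots, s_1\cdots s_{n-1}$ lie in $n$ distinct right cosets of~$H$, and since $|G:H| = n$ these are exactly all the cosets, one representative apiece. Now consider the $j$-th pass through the pattern, for $0 \le j \le |H|-1$: the vertex reached just before the $i$-th step of that pass is $c^{\,j}\,(s_1\cdots s_{i-1})$, because each completed pass multiplies the current vertex on the left by~$c$. Since $c^{\,j} \in H$, left-multiplication by $c^{\,j}$ permutes the right cosets of~$H$ among themselves only when $H$ is normal — so instead I would argue coset membership on the correct side: the element $c^{\,j}(s_1\cdots s_{i-1})$ lies in the coset $H(s_1\cdots s_{i-1})$ precisely because $c^{\,j}\in H$, so within each fixed pass the $n$ vertices still hit the $n$ distinct cosets, and I must show that as $j$ ranges over $0,\ldots,|H|-1$ the visits to a fixed coset $H(s_1\cdots s_{i-1})$ produce $|H|$ distinct elements. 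Two such vertices from passes $j$ and $j'$ agree iff $c^{\,j}(s_1\cdots s_{i-1}) = c^{\,j'}(s_1\cdots s_{i-1})$, i.e.\ $c^{\,j} = c^{\,j'}$, which forces $j \equiv j' \pmod{|H|}$; hence the $|H|$ passes give $|H|$ distinct elements of that coset, exhausting it.

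Combining the two counts, each of the $n$ cosets is visited exactly $|H|$ times, once per pass, and the $|H|$ visits are distinct elements filling the coset; so all $n|H| = |G|$ vertices are visited exactly once, and the closed walk is a hamiltonian cycle. Each step of the walk uses an element of $S \cup S^{-1}$, so it is a legitimate path in $\Cay(G;S)$.

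The main obstacle, and the only subtle point, is the side on which the cosets must be taken: because $H$ is not assumed normal, one cannot pass freely between left and right cosets, and the cancellation $c^{\,j} = c^{\,j'} \Rightarrow j \equiv j'$ relies on $c$ generating~$H$ with full order $|H|$. I would keep all coset computations as right cosets $H x$ and exploit that the left factor accumulated after $j$ complete passes is exactly $c^{\,j}\in H$, which is harmless for right-coset membership; this is what makes the non-normal version work where a naive normal-subgroup argument would not apply.
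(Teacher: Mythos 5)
Your proof is correct, and it is precisely the elementary verification that the paper has in mind: the paper states this lemma with only the remark that it is elementary and supplies no proof of its own. Your bookkeeping is sound at the one delicate point --- the vertex reached at step $i$ of pass $j$ is $c^j(s_1\cdots s_i)$ with $c=s_1\cdots s_n\in H$, so it stays in the \emph{right} coset $H(s_1\cdots s_i)$ without any normality assumption, and distinctness follows from the cosets being distinct together with $|c|=|H|$.
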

 
 The assumptions on the sequence $s_1,s_2,\ldots,s_n$ can also be expressed by saying that a certain quotient multigraph has a hamiltonian cycle:
 
 \begin{defn}
 If $H$ is any subgroup of~$G$, then $H \backslash \Cay(G;S)$ denotes the multigraph in which:
 	\begin{itemize}
	\item the vertices are the right cosets of~$H$,
	and
	\item there is an edge joining $Hg_1$ and~$Hg_2$ for each $s \in S \cup S^{-1}$, such that $g_1 s \in H g_2$.
	\end{itemize}
Thus, if there are two different elements $s_1$ and~$s_2$ of $S \cup S^{-1}$, such that $g_1 s_1$ and $g_1 s_2$ are both in $H g_2$, then the vertices $Hg_1$ and $Hg_2$ are joined by a double edge.
 \end{defn}

When the cyclic subgroup~$H$ is normal, we have the following well-known special case:

\begin{cor}[{``Factor Group Lemma''}] \label{FGL}
 Suppose
 \begin{itemize}
 \item $S$ is a generating set of~$G$,
 \item $N$ is a cyclic, normal subgroup of~$G$,
 \item $(s_1 N,\ldots,s_n N)$ is a hamiltonian cycle in $\Cay(G/N;S)$,
 and
 \item the product $s_1 s_2 \cdots s_n$ generates~$N$.
 \end{itemize}
 Then $(s_1,\ldots,s_n)^{|N|}$ is a hamiltonian cycle in $\Cay(G;S)$.
 \end{cor}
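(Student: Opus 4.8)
The plan is to deduce this directly from \cref{FGL(notnormal)} by taking the cyclic subgroup~$H$ there to be the normal subgroup~$N$. Since $N$ is cyclic by hypothesis, the choice $H = N$ satisfies the requirement that $H$ be cyclic, and $S$ generates~$G$ as required. By the Notation convention, each $s_i$ is an element of $S \cup S^{-1}$, so the only real work is to verify that the two bulleted conditions on the sequence $s_1,\ldots,s_n$ in \cref{FGL(notnormal)} follow from the hypotheses stated here.

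First I would confirm the index. A hamiltonian cycle in $\Cay(G/N;S)$ visits each of the $|G/N|$ vertices exactly once, and the cycle $(s_1 N,\ldots,s_n N)$ has length~$n$, so $|G:N| = n$; hence $|G:H| = n$, as the lemma demands. The key step is then translating the distinctness condition. Again by the Notation convention, the walk $(s_1 N,\ldots,s_n N)$ visits the cosets $N, s_1 N, s_1 s_2 N, \ldots, s_1\cdots s_{n-1} N$ before returning to~$N$, and because it is a hamiltonian cycle these $n$ left cosets are pairwise distinct. Since $N$ is normal, each left coset coincides with the corresponding right coset, namely $s_1\cdots s_i N = N s_1\cdots s_i = H(s_1\cdots s_i)$. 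Therefore the elements $e, s_1, s_1 s_2, \ldots, s_1\cdots s_{n-1}$ lie in pairwise distinct right cosets of~$H$, which is exactly the first condition of \cref{FGL(notnormal)}. The second condition, that the product $s_1 s_2 \cdots s_n$ generates $H = N$, is immediate from the final hypothesis.

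With both conditions verified, \cref{FGL(notnormal)} yields that $(s_1,\ldots,s_n)^{|N|}$ is a hamiltonian cycle in $\Cay(G;S)$, as desired. There is no substantial obstacle here: the statement is a genuine special case, and the only place an error could creep in is the left-versus-right coset bookkeeping. That is precisely the point at which normality of~$N$ is used, since for a non-normal subgroup the identification $gN = Ng$ fails and one must invoke the more general \cref{FGL(notnormal)} directly rather than passing through a quotient Cayley graph.
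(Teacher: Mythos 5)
Your proposal is correct and matches the paper's approach: the paper presents \cref{FGL} as an immediate special case of \cref{FGL(notnormal)} (taking $H=N$), and your write-up simply supplies the routine verification, with the left-versus-right coset identification via normality being exactly the point being exploited. No gaps.
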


When $|H|$ (or $|N|$) is prime, it is generated by any of its nontrivial elements.
So, in order to know that there is a hamiltonian cycle for which the product $s_1 s_2 \cdots s_n$ generates~$H$, it suffices to know that there are two hamiltonian cycles that differ in only one edge:

\begin{cor} \label{MultiDouble}
  Suppose
 \begin{itemize}
 \item $S$ is a generating set of~$G$,
 \item $H$ is a subgroup of~$G$, such that $|H|$ is prime,
 \item the quotient multigraph $H \backslash \Cay(G;S)$ has a hamiltonian cycle~$C$,
 and
 \item $C$ uses some double edge of~$H \backslash \Cay(G;S)$.
 \end{itemize}
 Then there is a hamiltonian cycle in $\Cay(G;S)$.
 \end{cor}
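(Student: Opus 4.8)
The plan is to deduce this from \cref{FGL(notnormal)}. Because $|H|$ is prime, $H$ is cyclic, so that lemma is available; moreover, every nontrivial element of~$H$ is a generator, and this is the fact that makes the double edge useful. To set things up, I would first translate the hamiltonian cycle~$C$ of $H \backslash \Cay(G;S)$ into a sequence $s_1, s_2, \ldots, s_n$ of elements of $S \cup S^{-1}$, where $n = |G:H|$: reading the edges of~$C$ in order starting from the coset~$H$, each edge from $H g_{i-1}$ to $H g_i$ is, by the definition of the quotient multigraph, witnessed by some $s_i \in S \cup S^{-1}$ with $g_{i-1} s_i \in H g_i$. Since $C$ visits every coset exactly once, the partial products $e, s_1, s_1 s_2, \ldots, s_1 \cdots s_{n-1}$ lie in pairwise distinct right cosets of~$H$, which is precisely the first hypothesis of \cref{FGL(notnormal)}. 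Because $C$ is a \emph{cycle}, the full product $P := s_1 s_2 \cdots s_n$ returns to the starting coset~$H$, so $P \in H$.

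It remains only to arrange that $P$ generates~$H$, and here is where I would use the double edge. Say $C$ traverses a double edge at step~$i$, joining $H g_{i-1}$ to $H g_i$. By definition, a double edge records two distinct elements $s_i, s_i' \in S \cup S^{-1}$ with both $g_{i-1} s_i$ and $g_{i-1} s_i'$ in $H g_i$. Replacing $s_i$ by $s_i'$ leaves the coset sequence unchanged (since $g_{i-1} s_i$ and $g_{i-1} s_i'$ lie in the same coset), so the modified sequence $s_1, \ldots, s_{i-1}, s_i', s_{i+1}, \ldots, s_n$ still satisfies the first hypothesis of \cref{FGL(notnormal)} and still has product $P' := s_1 \cdots s_{i-1}\, s_i'\, s_{i+1} \cdots s_n \in H$. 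Writing $a = s_1 \cdots s_{i-1}$ and $b = s_{i+1} \cdots s_n$, we have $P = a s_i b$ and $P' = a s_i' b$; if both were trivial then $a s_i b = a s_i' b$ would force $s_i = s_i'$, contradicting $s_i \neq s_i'$. Hence at least one of $P, P'$ is a nontrivial element of the prime-order group~$H$, and is therefore a generator of~$H$.

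Applying \cref{FGL(notnormal)} to whichever of the two sequences has a generating product then yields a hamiltonian cycle in $\Cay(G;S)$, as required. There is no serious obstacle here: the entire content is the observation that swapping to the parallel edge changes the group element $P$ (because $s_i \neq s_i'$) while leaving the coset walk intact, so the two candidate products cannot both be the identity. The prime-order hypothesis then upgrades ``nontrivial'' to ``generator,'' which is exactly what \cref{FGL(notnormal)} needs, so the only point requiring care is the bookkeeping that confirms both modified walks remain legitimate hamiltonian cycles of the quotient.
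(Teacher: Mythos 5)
Your argument is correct and is precisely the reasoning the paper intends: the remark immediately preceding \cref{MultiDouble} (``it suffices to know that there are two hamiltonian cycles that differ in only one edge'') is exactly your observation that the two products $a s_i b$ and $a s_i' b$ cannot both be trivial, so one of them generates the prime-order subgroup~$H$ and \cref{FGL(notnormal)} applies. The paper gives no further proof, so your write-up simply makes its intended argument explicit.
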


\begin{defn}
We say that a generating set~$S$ of a group~$G$ is \emph{minimal} if no proper subset of~$S$ generates~$G$.
\end{defn}

\begin{cor} \label{DoubleEdge}
  Suppose
 \begin{itemize}
 \item $N$ is a normal subgroup of~$G$, such that $|N|$ is prime,
 \item the image of $S$ in $G/N$ is a minimal generating set of~$G/N$,
 \item there is a hamiltonian cycle in $\Cay(G/N;S)$,
 and
 \item $s \equiv t \pmod{N}$ for some $s,t \in S \cup S^{-1}$ with $s
\neq
t$.
 \end{itemize}
 Then there is a hamiltonian cycle in $\Cay(G;S)$.
 \end{cor}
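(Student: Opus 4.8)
The plan is to reduce this to \cref{MultiDouble}, taking the subgroup there to be~$N$ itself. First I would check that the hypotheses of that corollary are in force. Since $s \neq t$ but $s \equiv t \pmod{N}$, the element $s^{-1}t$ is a nontrivial element of~$N$, hence generates~$N$ (because $|N|$ is prime); combining this with the fact that the image of~$S$ generates $G/N$ shows that $\langle S \rangle = G$, so $\Cay(G;S)$ is connected. Because $N$ is normal, the vertices of the quotient multigraph $N \backslash \Cay(G;S)$ are exactly the elements of $G/N$, and its underlying simple graph is precisely $\Cay(G/N;S)$. Thus the given hamiltonian cycle of $\Cay(G/N;S)$ is already a hamiltonian cycle of this multigraph.

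Next I would locate the relevant double edge. Write $c$ for the common image of $s$ and~$t$ in $G/N$. For every coset $Ng$, both generators $s$ and~$t$ carry $Ng$ to the single coset $Ngs = Ngt$; since $s \neq t$, these are two \emph{distinct} parallel edges of the multigraph, i.e., a genuine double edge lying over the edge of $\Cay(G/N;S)$ that is labelled by~$c$. So to apply \cref{MultiDouble} it suffices to exhibit a hamiltonian cycle of $N \backslash \Cay(G;S)$ that traverses an edge labelled~$c$.

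The crux of the argument is to show that the given hamiltonian cycle of $\Cay(G/N;S)$ must use the generator~$c$, and this is exactly where the minimality hypothesis enters. I would argue by contradiction: if no edge of the cycle were labelled by~$c$ (equivalently, by $c^{\pm1}$), then every edge would be labelled by an element of the image of~$S$ other than~$c$. Following the cycle from the identity coset, every coset of $G/N$ would then be expressible as a product of these remaining generators, so those generators would already generate $G/N$ — contradicting the minimality of the image of~$S$. Hence~$c$ is used; lifting that edge to either of its two parallel copies gives a hamiltonian cycle of $N \backslash \Cay(G;S)$ using the double edge.

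Finally, \cref{MultiDouble} applied with this subgroup~$N$ and this hamiltonian cycle produces a hamiltonian cycle in $\Cay(G;S)$, as required. The only real obstacle is the middle step: guaranteeing that the generator~$c$ actually appears among the edges of the quotient's hamiltonian cycle. This is precisely what minimality supplies, via the elementary principle that the set of generators appearing on a hamiltonian cycle of a Cayley graph must itself generate the group.
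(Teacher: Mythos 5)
Your proof is correct and is exactly the argument the paper intends: \cref{DoubleEdge} is stated there without proof as an immediate consequence of \cref{MultiDouble}, and your derivation---using minimality of the image of~$S$ to force the hamiltonian cycle of $\Cay(G/N;S)$ to traverse the common image of $s$ and~$t$, which lies under a double edge of the multigraph $N \backslash \Cay(G;S)$---supplies precisely the missing details.
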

 
 We will also use the following generalization of \cref{FGL(notnormal)}:
 
 \begin{lem}[{}{\cite[Lem.~5.1]{Witte-pn}}] \label{FGL(skewgen)}
 Suppose
 	\begin{itemize}
	\item $K$ is a normal subgroup of a subgroup~$H$ of~$G$,
	\item $(s_1,s_2,\ldots,s_n)$ is a hamiltonian cycle in the quotient $H \backslash \Cay(G;S)$,
	and
	\item the product $s_1s_2\cdots s_n$ generates $H/K$.
	\end{itemize}
Then $(s_1,s_2,\ldots,s_n)^{|H/K|}$ is a hamiltonian cycle in $K \backslash \Cay(G;S)$.
\end{lem}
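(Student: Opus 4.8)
The plan is to treat this as a relative version of the Factor Group Lemma (\cref{FGL}): I would exhibit the walk $(s_1,\ldots,s_n)^m$, where $m = |H/K|$, as a closed walk in $K \backslash \Cay(G;S)$ and check directly that it visits every vertex exactly once. (Taking $K$ trivial recovers \cref{FGL(notnormal)}, since a subgroup generated by a single element is cyclic, so this is genuinely a generalization.)

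First I would fix notation. Write $m = |H/K|$, set $h = s_1 s_2 \cdots s_n$, and let $p_i = s_1 s_2 \cdots s_i$ be the partial products, so $p_0 = e$ and $p_n = h$. Because $(s_1,\ldots,s_n)$ is a hamiltonian cycle in $H \backslash \Cay(G;S)$, its length $n$ equals $|G:H|$, and hence $K \backslash \Cay(G;S)$ has exactly $|G:K| = |G:H|\cdot|H:K| = nm$ vertices; meanwhile the walk $(s_1,\ldots,s_n)^m$ traverses exactly $nm$ edges. So it suffices to verify that this walk is closed and visits $nm$ distinct cosets of~$K$. Each traversal of the block $(s_1,\ldots,s_n)$ multiplies the current position on the right by $h$, so the vertices visited, over blocks $j = 0,\ldots,m-1$ and positions $i = 0,\ldots,n-1$, are precisely the cosets $K h^j p_i$. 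Closure is immediate: after $m$ blocks the walk reaches $h^m$, and since the hypothesis that $hK$ generates $H/K$ means $hK$ has order $m$, we get $h^m \in K$, so the walk returns to its start $K$.

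The crux, and the only real content, is distinctness of the $nm$ cosets $K h^j p_i$. Suppose $K h^j p_i = K h^{j'} p_{i'}$. I would first project to the coarser quotient $H \backslash \Cay(G;S)$: since $K \le H$ and $h^j, h^{j'} \in H$, we have $H h^j p_i = H p_i$ and $H h^{j'} p_{i'} = H p_{i'}$, so the assumed equality forces $H p_i = H p_{i'}$. But $H p_0,\ldots,H p_{n-1}$ are exactly the distinct vertices of the given hamiltonian cycle, whence $i = i'$. Cancelling $p_i$ then gives $h^{j-j'} \in K$, i.e.\ $(hK)^{j-j'} = K$; as $hK$ has order~$m$ and $|j - j'| < m$, this forces $j = j'$. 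Thus all $nm$ cosets are distinct, the walk is a hamiltonian cycle, and the lemma follows.

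I expect no serious obstacle: the argument is a direct extension of \cref{FGL}. The one point requiring care is the two-step projection in the distinctness check, first projecting to $H$-cosets to pin down the position $i$ within a block, and only then using the order of $hK$ in $H/K$ to pin down the block index $j$. Everything else is bookkeeping on right cosets.
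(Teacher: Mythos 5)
Your argument is correct: the closure check via $h^m \in K$, the coset count $|G:K| = nm$, and the two-step distinctness argument (project to $H$-cosets to fix $i$, then use the order of $hK$ in $H/K$ to fix $j$) together establish the lemma, and the appeal to normality of $K$ in $H$ is exactly where it is needed. The paper itself gives no proof, citing \cite[Lem.~5.1]{Witte-pn} instead, and your argument is essentially the standard one found there, so there is nothing further to compare.
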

 
 The theory of ``voltage graphs'' \cite[Thm.~2.1.3, p.~63]{GrossTucker} 
 (or see \cite[Thm.~5.2]{Alspach-lifting}) provides a method for applying \cref{FGL(notnormal)}. Here is one example that we will use:


 
\begin{thm}[Locke-Witte, c.f.\ {}{\cite[Prop.~3.3]{LockeWitte}}]\label{VoltageCor}
 Suppose 
 	\begin{itemize}
	\item $\Cay(G;S)$ is connected,
	\item $N$ is a normal subgroup of~$G$,
	\item $|N|$ is prime,
	and 
	\item for some~$k$, $\Cay(G/N; S)$ is isomorphic to either
		\begin{itemize}
		\item $\Cay \bigl( \integer_{4k}; \{1,2k\} \bigr)$ {\upshape(}a non-bipartite M\"obius ladder\/{\upshape)},
		or
		\item $\Cay \bigl( \integer_{2k} \times \integer_2; \{(1,0), (0,1)\} \bigr)$ {\upshape(}a bipartite prism\/{\upshape)}, with $2k \not\equiv 1 \pmod{|N|}$.
		\end{itemize}
	\end{itemize}
Then some hamiltonian cycle in $\Cay(G/N; S)$ lifts to a hamiltonian cycle in $\Cay(G;S)$.
\end{thm}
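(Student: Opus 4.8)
The plan is to use the theory of voltage graphs, as the statement suggests, to reduce the claim to an explicit computation in the quotient graph $X := \Cay(G/N;S)$, and then to finish with the Factor Group Lemma (\cref{FGL}). Since $N \normal G$ and $\Cay(G;S)$ is connected, the projection $\Cay(G;S) \to X$ is a connected regular covering whose deck group is $N$; as $|N| = p$ is prime, $N \iso \integer_p$. Fixing a base vertex and a spanning tree of $X$, voltage-graph theory assigns to each oriented closed walk $C$ in $X$ a net voltage $\phi(C) \in N$, and $\phi$ descends to a homomorphism from the cycle space of $X$ into $N$. I would rely on two standard facts: first, connectivity of the cover is equivalent to $\phi$ being surjective, hence (as $p$ is prime) to $\phi \not\equiv 0$; second, for a hamiltonian cycle $C = (s_1N,\ldots,s_nN)$ of $X$ the net voltage is exactly the product $s_1\cdots s_n \in N$, so $C$ lifts to a hamiltonian cycle of $\Cay(G;S)$ precisely when $\phi(C) \neq 0$ — this is the content of \cref{FGL}. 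Thus it suffices to exhibit a single hamiltonian cycle of $X$ with nonzero voltage.

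I would argue by contradiction: suppose $\phi(C) = 0$ for every hamiltonian cycle $C$ of $X$, and show that this forces $\phi \equiv 0$. To organize the computation, fix convenient generators of the cycle space: the $2k$ ``square'' $4$-cycles $Q_0,\ldots,Q_{2k-1}$ (each bounding a quadrilateral of the ladder or prism) together with one further generator $R$ — the rim in the prism case, and an odd rim-plus-rung cycle in the M\"obius case. Writing $q_i = \phi(Q_i)$, the homomorphism $\phi$ is completely determined by the values $q_i$ and $\phi(R)$, so it is enough to prove that these all vanish.

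The heart of the argument is to produce two families of hamiltonian cycles of $X$. First, for each $a$ there is a hamiltonian cycle $H_a$ using exactly two rungs, at consecutive positions $a,a+1$, together with all but two rim edges; an orientation-careful computation gives $\phi(H_a) = \sum_{i \neq a} q_i$. From $\phi(H_a) = 0$ for all $a$, subtraction yields that the $q_i$ share a common value $c$, and then $(2k-1)\,c = 0$ in $\integer_p$. Here the hypothesis $2k \not\equiv 1 \pmod{|N|}$ enters precisely: it makes $2k-1$ invertible modulo $p$, forcing $c = 0$ and hence every $q_i = 0$. (In the non-bipartite M\"obius case the rim is itself a hamiltonian cycle with voltage $\phi(R)$, which handles this direction without any congruence hypothesis — explaining why none is imposed there.) Second, a ``zigzag'' hamiltonian cycle $H_{\mathrm{zig}}$ using all the rungs: once every $q_i = 0$, a short computation collapses its voltage to $\phi(H_{\mathrm{zig}}) = \phi(R)$, so $\phi(H_{\mathrm{zig}}) = 0$ gives $\phi(R) = 0$ as well. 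Therefore $\phi \equiv 0$, contradicting connectivity; some hamiltonian cycle of $X$ must have nonzero voltage, and it lifts by \cref{FGL}.

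The conceptual steps are routine voltage bookkeeping, so the real work — and the only place an error is likely — is keeping the edge orientations consistent, so that the integer coefficient in the relation among the $q_i$ comes out as exactly $2k-1$ rather than $2k+1$. In the bipartite prism this sign accounting is what pins the argument to the hypothesis $2k \not\equiv 1 \pmod{|N|}$; in the M\"obius ladder the analogous care is needed to track the half-turn twist (the identification of the rung $r_{2k}$ with $-r_0$), which is exactly what makes the odd cycle, rather than the full rim, the correct extra generator of the cycle space.
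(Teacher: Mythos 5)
The paper itself offers no proof of this statement (it is quoted from Locke--Witte), so I can only judge your argument on its own terms; your overall plan --- set up the voltage homomorphism $\phi\colon H_1(X;\integer)\to N$, note that connectivity forces $\phi\neq 0$ while a hamiltonian cycle lifts exactly when its voltage is nonzero, and then show the hamiltonian cycles of $X$ span the cycle space modulo $|N|$ --- is indeed the standard route, and your treatment of the bipartite prism checks out. (I verified that $H_a=\sum_{i\neq a}Q_i$ in homology, that this forces the common value $c$ of the $q_i$ to satisfy $(2k-1)c=0$, and that once the $q_i$ vanish the all-rung zigzag is homologous to $R$ plus a sum of squares.) One point you should make explicit before any of this: the projection $\Cay(G;S)\to\Cay(G/N;S)$ is a genuine covering only when the generator $t$ mapping to the involution ($2k$, resp.\ $(0,1)$) is itself an involution of~$G$; otherwise the rung edges are doubled in $N\backslash\Cay(G;S)$ and the homological bookkeeping does not apply as stated. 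That case is harmless --- then $t^2$ generates $N$ and any hamiltonian cycle through a rung acquires nonzero voltage by traversing that rung as $t$ rather than $t^{-1}$ --- but it must be dispatched separately.

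The genuine gap is in the M\"obius case, which you essentially wave at. Writing the relations in the fundamental-cycle basis attached to the rungs, the rim gives $\rho=0$, the two-rung cycles give $f_a+f_{a+1}+\rho=0$ for $a\le 2k-2$ and $f_0-f_{2k-1}=0$ for the wrap-around cycle (the twist flips a sign there), and every other hamiltonian cycle of the M\"obius ladder uses its rungs in consecutive pairs and yields nothing new. These relations force $f_a=(-1)^af_0$ and $2f_0=0$ --- which kills $\phi$ when $|N|$ is odd but says nothing when $|N|=2$. This is not a bookkeeping artifact: take $G=\integer_{4k}\times\integer_2$, $S=\{(1,0),(0,1)\}$, $N=\langle(2k,1)\rangle$; then $\Cay(G/N;S)\iso\Cay(\integer_{4k};\{1,2k\})$, yet every hamiltonian cycle of the M\"obius ladder uses an even number of rungs, so its voltage always has trivial second coordinate and never generates~$N$. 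So your parenthetical claim that the non-bipartite case ``handles this direction without any congruence hypothesis'' is false --- a parity hypothesis is needed there too, and the statement is only safe because the paper applies it exclusively with $|N|$ an odd prime. Relatedly, your identification of the rim's voltage with $\phi(R)$ is off: with $R$ the odd $(2k+1)$-cycle, the rim is homologous to $2R-\sum_iQ_i$, and that factor of~$2$ is precisely the source of the trouble.
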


\subsection{Applications of \cref{FGL(notnormal)}}

For future reference, we record some special cases of \cref{FGL(notnormal)}. Although the hypotheses of these results are very restrictive (and rather complicated), they will be used many times.

\begin{lem}[Jungreis-Friedman {}{\cite[Lem.~6.1]{JungreisFriedman}}]\label{Stud61}
 Let $\{s_1,s_2\}$ generate the group $G$.  If
 \begin{itemize}
 \item $2 |s_1| \cdot | [s_1,s_2] | = |G|$,
 \item $s_2 \notin \langle s_1 \rangle \langle [s_1,s_2]
\rangle$,
 \item $\langle [s_1,s_2] \rangle \cap \langle s_1 \rangle
= \{e\}$, and
 \item $\langle [s_1,s_2] \rangle \cap \bigl( s_2^{-1}
\langle s_1 \rangle s_2 \bigr) = \{e\}$,
 \end{itemize}
 then  $\Cay \bigl( G;\{s_1,s_2\} \bigr)$ has a hamiltonian cycle.
 \end{lem}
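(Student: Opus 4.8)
The plan is to apply \cref{FGL(notnormal)} with the cyclic subgroup $H = \langle [s_1,s_2]\rangle$. Writing $n = |s_1|$, $c = [s_1,s_2]$, and $S = \{s_1,s_2\}$, the first hypothesis says exactly that $|G:H| = |G|/|c| = 2n$, so I need a sequence of $2n$ elements of $S\cup S^{-1}$ whose $2n$ partial products lie in distinct right cosets of~$H$ and whose total product generates~$H$. I would use the sequence
\[ \bigl(\,\underbrace{s_1,\dots,s_1}_{n-1},\; s_2^{-1},\; \underbrace{s_1^{-1},\dots,s_1^{-1}}_{n-1},\; s_2\,\bigr). \]
Its total product is $s_1^{n-1}s_2^{-1}s_1^{-(n-1)}s_2 = s_1^{-1}s_2^{-1}s_1 s_2 = c$, which generates~$H$, so the only real work is to check that the $2n$ partial products hit $2n$ distinct cosets.

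These partial products fall into two ``columns.'' The first $n$ of them are $e, s_1,\dots,s_1^{n-1}$, lying in the cosets $Hs_1^{i}$; these are distinct because $Hs_1^i = Hs_1^j$ would force $s_1^{i-j}\in H\cap\langle s_1\rangle = \{e\}$ (third hypothesis). For the second column I would use the identity $s_2^{-1}s_1 s_2 = s_1 c$ (equivalently $s_1 s_2 = s_2 s_1 c$): commuting $s_1^{-1}$ past $s_2^{-1}$ at the cost of a factor~$c$ and then using $Hc = H$ rewrites the remaining partial products $s_1^{n-1}s_2^{-1}s_1^{-j}$ as the cosets $Hs_2^{-1}s_1^{-j}$. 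Two of these coincide exactly when $(s_1 c)^{j-i} = (s_2^{-1}s_1 s_2)^{j-i}\in H$; since $\langle s_1 c\rangle = s_2^{-1}\langle s_1\rangle s_2$ and $|s_1 c| = |s_1| = n$, the fourth hypothesis forces $i = j$. Finally, a collision between the two columns, $Hs_1^{i} = Hs_2^{-1}s_1^{-j}$, is equivalent to $s_1^{i+j}s_2\in H$, i.e.\ $s_2\in\langle s_1\rangle H = \langle s_1\rangle\langle [s_1,s_2]\rangle$, which the second hypothesis forbids. Thus all $2n$ cosets are distinct, and \cref{FGL(notnormal)} yields the hamiltonian cycle.

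The step I expect to be fussiest is this second column: because $H$ need not be normal (indeed it typically is not, as already happens when $G\cong A_4$, $s_1$ a $3$-cycle, and $s_2$ a double transposition), one cannot simply pass to a quotient $G/H$, and the conjugating prefix $s_1^{n-1}$ sitting in front of $s_2^{-1}$ must be handled directly. The observations that make this manageable are the single commutator relation $s_2^{-1}s_1 s_2 = s_1 c$, which simultaneously identifies the conjugate subgroup $s_2^{-1}\langle s_1\rangle s_2$ of the fourth hypothesis as $\langle s_1 c\rangle$ and lets me strip the prefix via $Hc = H$, together with the fact that $s_1 c$ is conjugate to $s_1$ and so has order~$n$. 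Each of the four hypotheses then enters the verification exactly once, which is reassuring evidence that they are tight.
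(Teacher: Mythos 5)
Your proposal is correct and follows essentially the same route as the paper: both apply \cref{FGL(notnormal)} with $H=\langle[s_1,s_2]\rangle$ to the walk $(s_1^{|s_1|-1},s_2^{-1},s_1^{-(|s_1|-1)},s_2)^{|[s_1,s_2]|}$, and both verify coset-distinctness via the same three checks (within each column using the third and fourth hypotheses, and across columns using the second), with the identity $\gamma s_2^{-1}s_1^{-1}=s_1^{-1}s_2^{-1}$ doing the work of stripping the conjugating prefix. The only quibble is a harmless off-by-one in your relabelling of the second column's cosets, which affects neither the distinctness argument nor the conclusion.
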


\begin{proof}
 For convenience, let $\gamma = [s_1,s_2] = s_1^{-1}
s_2^{-1} s_1 s_2$. We claim that
 $$ ( s_1^{|s_1|-1}, s_2^{-1}, s_1^{-(|s_1|-1)},
s_2)^{|\gamma|} $$ is a hamiltonian cycle. 
This will follow from \cref{FGL(notnormal)} if we show that the vertices of the walk 
$ ( s_1^{|s_1|-1}, s_2^{-1}, s_1^{-(|s_1|-1)})$ 
are all in different right cosets of~$\langle \gamma \rangle$.

Note that the vertices in this walk are all in $\langle s_1 \rangle$ or $s_1^{-1} s_2^{-1} \langle s_1 \rangle$, and that 
	\begin{align} \label{Stud71pf-s1s2=s2}
	\langle \gamma \rangle s_1^{-1} s_2^{-1} \langle s_1 \rangle 
		= \langle \gamma \rangle (s_1^{-1} s_2^{-1} s_1 s_2) s_2^{-1} s_1^{-1} \langle s_1 \rangle 
		= \langle \gamma \rangle s_2^{-1} \langle s_1 \rangle 
		. \end{align}
	\begin{itemize}
	\item Since $\langle \gamma \rangle \cap \langle s_1 \rangle = \{e\}$, we know that all of the elements of $\langle s_1 \rangle$ are in different right cosets.
	\item Since $\langle \gamma \rangle \cap s_2^{-1} \langle s_1 \rangle s_2 = \{e\}$, we know that all of the elements of $s_2^{-1} \langle s_1 \rangle$ are in different right cosets. So \pref{Stud71pf-s1s2=s2} implies that all of the elements of $s_1^{-1} s_2^{-1} \langle s_1 \rangle$ are in different right cosets.
	\item
	Since $s_2^{-1} \notin \langle \gamma \rangle \langle s_1 \rangle$, we know that
		$$ \langle \gamma \rangle  \langle s_1 \rangle \cap 
		\langle \gamma \rangle s_2^{-1} \langle s_1 \rangle  
		= \emptyset .
		$$
	So \pref{Stud71pf-s1s2=s2}  implies that none of the elements of $\langle s_1 \rangle$ are in the same right coset as any element of $s_1^{-1} s_2^{-1} \langle s_1 \rangle$.
	\qedhere
	\end{itemize}
 \end{proof}

\Cref{Stud61} will be used many times; here is an example.

\begin{cor} \label{Z3kx(Z2xZ2)}
If $G \iso \integer_{3^k} \ltimes (\integer_2 \times \integer_2)$, for some $k \in \integer^+$, then every connected Cayley graph on~$G$ has a hamiltonian cycle.
\end{cor}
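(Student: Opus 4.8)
The plan is to reduce to a two-element generating set and then apply \cref{Stud61}. If the action of $\integer_{3^k}$ on $\integer_2\times\integer_2$ is trivial, then $G$ is abelian and \cref{abel} applies, so I assume the action is nontrivial. Write $G=A\ltimes V$ with $A\iso\integer_{3^k}$ and $V\iso\integer_2\times\integer_2$. The image of $A$ in $\Aut(V)\iso S_3$ has order dividing $\gcd(3^k,6)=3$, so a generator of~$A$ acts as an automorphism $\sigma$ of order~$3$, which cyclically permutes the three involutions $x,y,z$ of~$V$ (with no fixed points). In particular $[G,G]=V$ is \emph{not} cyclic, so \cref{KeatingWitte} does not apply directly; this is exactly why a separate argument is needed. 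Finally, since enlarging the connection set only adds edges, a minimal generating subset $S_0\subseteq S$ gives a spanning subgraph of $\Cay(G;S)$, so it suffices to produce a hamiltonian cycle in $\Cay(G;S)$ when $S$ is a minimal generating set.

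Next I would show that every minimal generating set has exactly two elements. Because $A$ is cyclic of prime-power order, some $s_1\in S$ must project to a generator of $G/V\iso A$. A short computation shows $s_1$ then has order exactly $3^k$: writing $s_1=v\,a_0$ with $a_0$ generating a fixed complement, the telescoping identity $w+\sigma(w)+\sigma^2(w)=0$ (valid for every $w\in V$, since $x+y+z=0$) gives $s_1^{3^k}=e$. Hence $\langle s_1\rangle$ is itself a complement to~$V$, and after taking $\langle s_1\rangle$ as the reference complement I may assume $s_1=a$ generates~$A$ and acts on~$V$ as~$\sigma$. Any $s_2\in S$ with $s_2\notin\langle s_1\rangle$ then has nontrivial $V$-part~$w$, whose $\langle s_1\rangle$-orbit is all of $\{x,y,z\}$, so $\langle s_1,s_2\rangle=G$; minimality forces $S=\{s_1,s_2\}$ with $s_2=w\,a^j$ for some involution $w\in\{x,y,z\}$ and some integer~$j$.

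It then remains to verify the four hypotheses of \cref{Stud61} for $\{s_1,s_2\}$. Working in the coordinates $(u_1,t_1)(u_2,t_2)=(u_1+{}^{t_1}u_2,\,t_1t_2)$, a direct computation gives $\gamma:=[s_1,s_2]=\sigma^{-1-j}(w)+\sigma^{-j}(w)$, the sum of two distinct $\sigma$-translates of~$w$, hence the third involution $\sigma^{1-j}(w)$ of~$V$. Thus $|\gamma|=2$ and $2\,|s_1|\cdot|\gamma|=2\cdot 3^k\cdot 2=|G|$, which is the first hypothesis. The third and fourth hypotheses are immediate from coprimality: $\langle\gamma\rangle$ has order~$2$, whereas $\langle s_1\rangle$ and its conjugate $s_2^{-1}\langle s_1\rangle s_2$ both have odd order~$3^k$, so each of these subgroups meets $\langle\gamma\rangle$ trivially.

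The one step needing genuine care — and the main obstacle — is the second hypothesis, $s_2\notin\langle s_1\rangle\langle\gamma\rangle$, which requires correctly identifying that coset set rather than naively reading off its $V$-parts. The elements of $\langle s_1\rangle\langle\gamma\rangle$ are $a^i$ and $a^i\gamma$, which in coordinates have $A$-part $a^i$ and $V$-part $0$ or $\sigma^i(\gamma)$; so membership of $s_2=(w,a^j)$ would force $w=\sigma^j(\gamma)$. But $\gamma=\sigma^{1-j}(w)$ gives $\sigma^j(\gamma)=\sigma(w)\neq w$, so the hypothesis holds for every admissible~$j$ (no case distinction on $j\bmod 3$ is actually needed). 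With all four hypotheses verified, \cref{Stud61} produces the desired hamiltonian cycle in $\Cay\bigl(G;\{s_1,s_2\}\bigr)$, and hence in $\Cay(G;S)$.
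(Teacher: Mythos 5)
Your proposal is correct and follows the same route as the paper's proof: reduce to a minimal two-element generating set $\{s_1,s_2\}$ with $s_1$ generating a complement to $\integer_2\times\integer_2$, compute that $[s_1,s_2]$ is an involution outside $\langle w\rangle$, and apply \cref{Stud61}. You simply supply in full the details (that $\#S=2$, that $s_1$ has order $3^k$, and the verification of the second hypothesis of \cref{Stud61}) which the paper leaves as "easy to verify."
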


\begin{proof}
We may assume $\integer_{3^k}$ acts nontrivially on $\integer_2 \times \integer_2$, for otherwise $G$ is abelian, so \cref{KeatingWitte} applies. Then $\#S = 2$, and some element~$s_1$ of~$S$ generates (a conjugate of)~$\integer_{3^k}$. The other element~$s_2$ of~$S$ is of the form $s_1^i y$ with $y \in \integer_2 \times \integer_2$, so we have 
	$$[s_1,s_2] = [s_1,y] \in (\integer_2 \times \integer_2) - \langle y \rangle ,$$
so it is easy to verify the hypotheses of \cref{Stud61}.
\end{proof}

 \begin{cor} \label{4p}
 If\/ $|G| = 4p$, where $p$ is prime, then every connected Cayley graph on~$G$ has a hamiltonian cycle.
 \end{cor}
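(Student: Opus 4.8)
The plan is to reduce the statement to the two easy cases that are already available: either the commutator subgroup of~$G$ is a cyclic $p$-group (so that \cref{KeatingWitte} applies), or $G$ is precisely the semidirect product handled by \cref{Z3kx(Z2xZ2)}. The work is entirely structural: a short Sylow analysis shows that one of these two situations must occur.

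First I would dispose of the prime $p = 2$: then $|G| = 8 = 2^3$ is a prime power, so \cref{pk} applies. Henceforth assume $p$ is odd, and let $P$ be a Sylow $p$-subgroup of~$G$, so $|P| = p$ and $P$ is cyclic. By Sylow's Theorem, the number~$n_p$ of Sylow $p$-subgroups satisfies $n_p \equiv 1 \pmod p$ and $n_p \mid 4$. Since the divisors of~$4$ are $1$, $2$, and~$4$, while $2 \not\equiv 1 \pmod p$ for every prime~$p$ and $4 \equiv 1 \pmod p$ forces $p = 3$, we conclude that $P$ is normal in~$G$ \emph{unless} $p = 3$ and $n_3 = 4$.

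In the main case, where $P \normal G$, the quotient $G/P$ has order~$4$ and is therefore abelian. Hence $[G,G] \subseteq P$, so $[G,G]$ is cyclic of order $p$ or~$1$; in either case it is a cyclic $p$-group, and \cref{KeatingWitte} supplies a hamiltonian cycle.

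It remains only to treat $p = 3$ with $n_3 = 4$, and here I expect the one point requiring care. Counting elements of order~$3$, the four Sylow $3$-subgroups account for $4 \cdot 2 = 8$ elements, leaving exactly~$4$ elements to form a Sylow $2$-subgroup~$Q$; being the unique one, $Q$ is normal. Since $\integer_3$ has no nontrivial action on a cyclic group of order~$4$, the group $Q$ must be elementary abelian, and the conjugation action of~$P$ on~$Q$ is nontrivial, for otherwise $G$ would be abelian and we would have $n_3 = 1$. Thus $G \iso \integer_3 \ltimes (\integer_2 \times \integer_2)$, so \cref{Z3kx(Z2xZ2)} (with $k = 1$) completes the proof. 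The analysis presents no genuine obstacle; the only subtlety is verifying that the non-normal Sylow $3$-case collapses to this single group.
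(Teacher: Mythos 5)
Your proof is correct and follows essentially the same route as the paper: the normal-Sylow case reduces to \cref{KeatingWitte} (the paper packages this step as \cref{p2q-easy}), and the exceptional case $p=3$ with non-normal Sylow $3$-subgroup is identified as $A_4 \iso \integer_3 \ltimes (\integer_2\times\integer_2)$, handled by \cref{Z3kx(Z2xZ2)}. You merely spell out the Sylow counting and the identification of $A_4$ that the paper leaves as "easy to see."
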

 
 \begin{proof}
 \Cref{p2q-easy} applies unless $p = 3$. However, if $p = 3$, then either the Sylow $3$-subgroup is normal, so the argument of \cref{p2q-easy} applies, or $G \iso A_4 \iso \integer_3 \ltimes (\integer_2 \times \integer_2)$, so \cref{Z3kx(Z2xZ2)} applies. 
\end{proof}

\begin{lem}[Jungreis-Friedman {}{\cite[Lem.~7.1]{JungreisFriedman}}] \label{Stud71}
 Let $S$ be a minimal generating set for the group~$G$.  If there exist
two distinct generators $s_1, s_2 \in S$ such that: \begin{itemize}
\item
$|s_1s_2|=|G|/|\langle S-\{s_1\}\rangle|$, \item $\langle
s_1s_2\rangle\cap \langle S-\{s_1\}\rangle=\{e\}$, and \item there is a
hamiltonian cycle in $\Cay \bigl( \langle S-\{s_1\}\rangle; S-\{s_1\} \bigr)$,
 \end{itemize}
 then there is a hamiltonian cycle in $\Cay(G;S)$.
 \end{lem}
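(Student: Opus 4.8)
The plan is to apply \cref{FGL(notnormal)} with the cyclic subgroup $N = \langle s_1 s_2 \rangle$. Write $T = S \setminus \{s_1\}$ and $H = \langle T \rangle$, and put $m = |H|$. By the first two hypotheses, $N$ is cyclic of order $|s_1 s_2| = |G|/|H| = |G:H|$, and $N \cap H = \{e\}$. Hence $|NH| = |N|\,|H| = |G|$, so the $m$ elements of $H$ lie in $m = |G:N|$ distinct right cosets of $N$; that is, $H$ is a complete set of right-coset representatives for $N$. Since $|G:N| = m$, \cref{FGL(notnormal)} will apply as soon as I produce a walk $(u_1, \ldots, u_m)$ in $\Cay(G;S)$ whose partial products $e, u_1, \ldots, u_1 \cdots u_{m-1}$ exhaust $H$ (and so lie in distinct right cosets of $N$) and whose full product $u_1 \cdots u_m$ generates $N$.

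The heart of the matter is to route a hamiltonian path of $\Cay(H;T)$ from $e$ to $s_2^{-1}$. First I would observe that minimality of $S$ forces $T$ to be a minimal generating set of $H$: if some proper subset of $T$ generated $H$, then adjoining $s_1$ would give a proper subset of $S$ generating $G$. In particular $\langle T \setminus \{s_2\} \rangle$ is a proper subgroup of $H$, so the spanning subgraph of $\Cay(H;T)$ that uses only the generators in $T \setminus \{s_2\}$ is disconnected. Consequently the hamiltonian cycle of $\Cay(H;T)$ supplied by the third hypothesis must traverse some edge labelled $s_2^{\pm 1}$, say the edge joining $h$ and $h s_2^{-1}$. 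Because $\Cay(H;T)$ is vertex-transitive, left translation by $h^{-1}$ is an automorphism carrying this edge to the one joining $e$ and $s_2^{-1}$; applying it yields a hamiltonian cycle $C$ through $\{e, s_2^{-1}\}$. Deleting that edge from $C$ leaves a hamiltonian path $P$ of $\Cay(H;T)$ from $e$ to $s_2^{-1}$.

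To finish, let $(u_1, \ldots, u_{m-1})$ be the sequence of edge-labels of $P$ (each lying in $T \cup T^{-1} \subseteq S \cup S^{-1}$) and set $u_m = s_1^{-1}$. The partial products $e, u_1, \ldots, u_1 \cdots u_{m-1}$ are precisely the $m$ vertices of $P$, hence all of $H$, so they lie in distinct right cosets of $N$; and since $u_1 \cdots u_{m-1} = s_2^{-1}$ is the endpoint of $P$, the full product is $u_1 \cdots u_{m-1} s_1^{-1} = s_2^{-1} s_1^{-1} = (s_1 s_2)^{-1}$, a generator of $N$. Thus \cref{FGL(notnormal)} shows that $(u_1, \ldots, u_{m-1}, s_1^{-1})^{|N|}$ is a hamiltonian cycle in $\Cay(G;S)$. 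I expect the routing step of the second paragraph to be the subtle point: the hypothesis asserts only bare hamiltonicity of $\Cay(H;T)$, and the whole argument hinges on upgrading this to a hamiltonian cycle through the prescribed edge $\{e, s_2^{-1}\}$, which is exactly what makes the terminal product equal a generator of $N$ rather than the identity.
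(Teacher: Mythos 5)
Your proof is correct and follows essentially the same route as the paper: both arguments use the minimality of $S$ to find an $s_2^{\pm1}$-edge in the given hamiltonian cycle of $\Cay\bigl(\langle S-\{s_1\}\rangle; S-\{s_1\}\bigr)$, reposition it, and then close up with an $s_1^{\pm1}$-edge via \cref{FGL(notnormal)}. The only (cosmetic) difference is that you append $s_1^{-1}$ so that the terminal product is $(s_1s_2)^{-1}$ and you can work directly with $\langle s_1s_2\rangle$, whereas the paper appends $s_1$ and therefore needs the extra observation that $\langle s_2s_1\rangle = s_2^{-1}\langle s_1s_2\rangle s_2$ also meets $\langle S-\{s_1\}\rangle$ trivially.
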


\begin{proof}
 Let $(t_i)_{i=1}^{n}$ be a hamiltonian cycle in $\Cay \bigl( \langle S-\{s_1\}\rangle;
S-\{s_1\} \bigr)$.  Since $S$ is a minimal generating set for $G$, we know that
$s_2$ or its inverse must appear somewhere in this cycle, and by choosing a different
starting point if necessary, and reversing the cycle if necessary, we can assume without loss of generality
that $t_n=s_2^{-1}$.  Then $t_1t_2\ldots t_{n-1}=s_2$.

Since $\langle s_1s_2 \rangle \cap \langle S - \{s_1 \} \rangle = \{e\}$, conjugating by $s_2^{-1}$ tells us that $\langle s_2s_1 \rangle \cap \langle S - \{s_1 \} \rangle$ is also trivial.
So the elements of $\langle S - \{s_1 \} \rangle$ are all in different right cosets of $\langle s_2s_1 \rangle$. Therefore \cref{FGL(notnormal)} tells us that 
$$\bigl( (t_i)_{i=1}^{n-1} , s_1 \bigr)^{|s_1s_2|}$$ is a hamiltonian
cycle in
$\Cay(G;S)$. 
 \end{proof}

\begin{cor}\label{direct-prod-cor}
 Let $S$ be a minimal generating set for the group $G$.  If  there exist two distinct
generators $s_1,s_2\in S$, such that 
	\begin{itemize}
	\item $\Cay \bigl( \langle S-\{s_1\}\rangle;S-\{s_1\} \bigr)$ has a
hamiltonian cycle, 
	and 
	\item $|s_1s_2|=|G|/|\langle S-\{s_1\}\rangle|$ is prime,
	\end{itemize}
then there is a hamiltonian cycle in $\Cay(G;S)$.
 \end{cor}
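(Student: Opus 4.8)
The plan is to deduce this corollary directly from \cref{Stud71} by showing that the primality hypothesis automatically supplies the one condition of that lemma which is not already assumed here. Write $H = \langle S - \{s_1\}\rangle$ and set $p = |s_1 s_2| = |G|/|H|$, which is prime by assumption. Comparing the two statements, both require $S$ to be minimal, both require a hamiltonian cycle in $\Cay \bigl( H; S - \{s_1\} \bigr)$, and both require the index relation $|s_1 s_2| = |G|/|H|$; the only hypothesis of \cref{Stud71} still to be checked is that $\langle s_1 s_2\rangle \cap H = \{e\}$. So the whole corollary reduces to verifying this intersection is trivial.

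To do so, first I would note that $H$ is a \emph{proper} subgroup of~$G$: since $p = |G|/|H| > 1$ we have $H \neq G$, and because $S$ generates~$G$ while $S - \{s_1\}$ generates only~$H$, this forces $s_1 \notin H$. Next, the cyclic subgroup $\langle s_1 s_2\rangle$ has order~$p$, which is prime, so its only subgroups are $\{e\}$ and itself; hence $\langle s_1 s_2\rangle \cap H$ is either trivial or all of $\langle s_1 s_2\rangle$. In the latter case we would have $s_1 s_2 \in H$, but since $s_2 \in S - \{s_1\} \subseteq H$ (using $s_1 \neq s_2$), it would follow that $s_1 = (s_1 s_2) s_2^{-1} \in H$, contradicting $s_1 \notin H$. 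Therefore $\langle s_1 s_2\rangle \cap H = \{e\}$, all three hypotheses of \cref{Stud71} are met, and that lemma produces a hamiltonian cycle in $\Cay(G;S)$.

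There is no substantial obstacle to this argument: the single nontrivial observation is that a cyclic group of prime order meets $H$ trivially unless it is wholly contained in~$H$, and containment is excluded by $s_1 \notin H$. I would expect the published proof to consist of essentially these few lines.
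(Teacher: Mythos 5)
Your argument is correct and matches the paper's proof in all essentials: both reduce the corollary to verifying $\langle s_1s_2\rangle\cap\langle S-\{s_1\}\rangle=\{e\}$ for \cref{Stud71}, and both use primality of $|s_1s_2|$ to conclude that a nontrivial intersection would force $s_1s_2$, hence $s_1$, into $\langle S-\{s_1\}\rangle$, which is impossible. The only cosmetic difference is that the paper derives the contradiction directly from the minimality of~$S$, whereas you first deduce $s_1\notin\langle S-\{s_1\}\rangle$ from $|G|/|\langle S-\{s_1\}\rangle|>1$; both routes are valid.
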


\begin{proof}
 In order to apply \cref{Stud71}, we need only show that
 $\langle s_1s_2\rangle\cap \langle S-\{s_1\}\rangle=\{e\}$.
Suppose, to the contrary, that $\langle
S-\{s_1\}\rangle$ contains a nontrivial element of $\langle s_1s_2\rangle$.
Since $|s_1s_2|$ is prime, this implies that $s_1s_2 \in \langle S-\{s_1\}\rangle$.
But, since $s_1$ and $s_2$ are distinct, we also have $s_2 \in \langle S-\{s_1\}\rangle$.
Therefore $s_1 \in \langle S-\{s_1\}\rangle$, contradicting the minimality of~$S$.
 \end{proof}

 \subsection{Groups of dihedral type} 

\begin{notation} 
We use $D_{2n}$ and $Q_{4n}$ to denote the \emph{dihedral group} of order~$2n$ and the \emph{generalized quaternion group} of order~$4n$, respectively. That is,
	$$ D_{2n} = \langle f,x \mid f^2 = x^n = e, \  fxf = x^{-1} \rangle $$
and
	$$ Q_{4n} = \langle f,x \mid x^{2n} = e, \ f^2 = x^n, \ f^{-1} x f = x^{-1} \rangle. $$
\end{notation}

\begin{defn} \ \label{DihTypeDefn}
	\begin{itemize}
	\item  A group~$G$ is of \emph{dihedral type} if it has
		\begin{itemize}
		\item an abelian subgroup~$A$ of index~$2$,
		and
		\item an element~$f$ of order~$2$ (with $f \notin A$),
		\end{itemize}
	such that $f$ inverts every element of~$A$ (i.e., $f^{-1} a f = a^{-1}$ for all $a \in A$).
	\item  A group~$G$ is of \emph{quaternion type} if it has 
		\begin{itemize}
		\item an abelian subgroup~$A$ of index~$2$,
		and
		\item an element~$f$ of order~$4$,
		\end{itemize}
	such that $f$ inverts every element of~$A$. 
	\end{itemize}
 Thus, dihedral groups are the groups of dihedral type in which $A$ is cyclic, while generalized quaternion groups are the groups of quaternion type in which $A$ is cyclic.
 \end{defn}

It is not very difficult to show that Cayley graphs on dihedral groups of small order are hamiltonian:

\begin{lem}[Witte {\cite[Prop.~5.5]{Witte-CayDiags}}] \label{dihedral}
If $n$ has at most three distinct prime factors, then every connected Cayley graph on $D_{2n}$ has a hamiltonian cycle.
\end{lem}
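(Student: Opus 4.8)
The plan is to induct on~$n$, exploiting the fact that a suitable cyclic quotient of $D_{2n}$ is again dihedral. Throughout, write $A = \langle x\rangle \iso \integer_n$ for the cyclic subgroup of index~$2$; every element outside~$A$ is an involution (a ``reflection''). We may assume $G$ is nonabelian (so $n \geq 3$) and that the generating set~$S$ is minimal, since a hamiltonian cycle in a spanning subgraph is one in the whole graph. As rotations generate only subgroups of~$A$, the set~$S$ must contain at least one reflection. As a first reduction, note that $[G,G] = \langle x^2\rangle$ is always cyclic, so if its order is a prime power then \cref{KeatingWitte} already applies; hence the substantive case is when $\langle x^2\rangle$, and thus~$n$, involves at least two primes.

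For the inductive step I would pick a prime $p \mid n$ and let $N = \langle x^{n/p}\rangle$ be the unique subgroup of order~$p$ inside the characteristic subgroup~$A$; then $N \normal G$ and $G/N \iso D_{2n/p}$. Since $n/p$ still has at most three prime factors, the induction hypothesis (or \cref{abel} when $n/p \le 2$) yields a hamiltonian cycle in $\Cay(G/N; S)$. To lift it, the Factor Group Lemma (\cref{FGL}) reduces matters to producing a hamiltonian cycle in the quotient whose generator-product generates $N \iso \integer_p$. The cleanest way to guarantee this is via a double edge: if two distinct $s,t \in S \cup S^{-1}$ satisfy $s \equiv t \pmod N$ (equivalently, two of the exponents appearing in~$S$ agree modulo~$n/p$), then \cref{DoubleEdge} lifts a hamiltonian cycle, provided the image of~$S$ remains minimal in~$G/N$. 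When the quotient happens to be a M\"obius ladder or a prism, \cref{VoltageCor} supplies the lift instead.

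The base of the case analysis is $|S| = 2$. If both generators are reflections, then $\Cay(G;S)$ is $2$-regular and connected, hence a single $2n$-cycle, so it is trivially hamiltonian. If $S$ consists of one reflection and one rotation, minimality forces the rotation to generate~$A$, and $\Cay(G;S)$ is a prism or a M\"obius ladder, again hamiltonian (or apply \cref{VoltageCor}). For $|S| \ge 3$ I would delete a generator~$s_1$: by minimality $H = \langle S - \{s_1\}\rangle$ is a proper subgroup, hence cyclic or dihedral of smaller order, so $\Cay(H; S - \{s_1\})$ is hamiltonian by \cref{abel} or by induction; one then aims to apply \cref{Stud71} or \cref{direct-prod-cor}, which require $|G:H|$ prime and $|s_1 s_2|$ to match.

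The hard part is the bookkeeping that shows one of these mechanisms always fires, especially for minimal generating sets consisting entirely of reflections, which (because $n$ has at most three prime factors) may contain three or even four involutions $f x^{c_0}, \ldots, f x^{c_k}$. Here $\Cay(G;S)$ is bipartite with parts $A$ and $fA$, the double-edge criterion becomes the congruence $c_i \equiv c_j \pmod{n/p}$, and one must choose the prime~$p$ (among at most three candidates) so that either such a coincidence occurs or the product around a lifted cycle genuinely generates~$N$. Controlling these gcd and coprimality relations among the exponents~$c_i$ relative to $n = p_1^{a_1}p_2^{a_2}p_3^{a_3}$, and ruling out a bad configuration for \emph{every} admissible prime, is exactly the point at which the hypothesis ``at most three prime factors'' is consumed, and is where essentially all of the work of the proof lies.
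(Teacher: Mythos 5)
The paper does not actually prove this lemma: it is imported wholesale from Witte \cite[Prop.~5.5]{Witte-CayDiags}, so there is no internal argument to compare yours against. Judged on its own terms, your proposal assembles the right machinery (the reduction via \cref{KeatingWitte} to $n$ with at least two odd prime divisors, the quotient $G/N \iso D_{2n/p}$, the Factor Group Lemma, \cref{DoubleEdge}, \cref{VoltageCor}, \cref{Stud71}), and your base cases for $|S|=2$ are correct. But the last paragraph concedes that ``essentially all of the work of the proof lies'' in showing that one of these mechanisms always fires for minimal generating sets of three or four reflections, and then does not do that work. Naming the difficulty is not the same as resolving it, so as written this is an outline, not a proof.

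The gap is concrete. Take $n = p_1 p_2 p_3$ squarefree and odd and $S = \{f,\ f x^{p_1 p_2},\ f x^{p_1 p_3},\ f x^{p_2 p_3}\}$, a minimal generating set of four reflections. For each choice of prime $p_i$ you do get a coincidence $c_j \equiv c_k \pmod{n/p_i}$, but \cref{DoubleEdge} also requires the image of $S$ in $G/N$ to be a \emph{minimal} generating set of $G/N$, and after identifying the two congruent reflections the surviving three-element image need not be minimal in $D_{2n/p_i}$; when it is not, \cref{DoubleEdge} simply does not apply and you have said nothing about what replaces it. Likewise, \cref{Stud71} and \cref{direct-prod-cor} demand $|s_1 s_2| = |G|/|\langle S - \{s_1\}\rangle|$ together with a trivial intersection condition, and for reflection-only generating sets $s_1 s_2$ is a rotation whose order is governed by $\gcd(c_1 - c_2, n)$ --- there is no argument that some pair in $S$ satisfies these constraints. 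Finally, when the Factor Group Lemma is invoked directly, the product around a lifted cycle is a rotation in $N$ that may well be trivial, and you give no mechanism for modifying the cycle when it is. Until these configurations are actually analyzed (this is precisely the content of Witte's Prop.~5.5), the lemma has not been established.
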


A similar argument also yields a result for other groups of dihedral type:

\begin{prop}[Jungreis-Friedman {\cite[Thm.~5.4]{JungreisFriedman}}] \label{DihType}
 If $G = \integer_2 \ltimes A$ is of dihedral type, and $|A|$ is the product of at
most three primes {\upshape(}not necessarily distinct\/{\upshape)}, then every connected Cayley graph on~$G$ has a hamiltonian cycle. 
 \end{prop}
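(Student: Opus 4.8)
The plan is to reduce to a minimal generating set and then lift a hamiltonian cycle from a quotient, following the same pattern as the dihedral case \cref{dihedral}. First I would note that we may assume $S$ is a minimal generating set, since enlarging $S$ only adds edges. Because $A$ has index~$2$ and $G/A \iso \integer_2$, at least one element of $S$ lies outside~$A$; such elements are involutions that invert $A$ by conjugation (as $A$ is abelian and $f$ inverts it), and I will call them \emph{reflections}, the elements of $A$ being \emph{rotations}. A crucial structural fact is that every subgroup of the abelian group~$A$ is normal in~$G$, being inverted, hence preserved, by~$f$. If $A$ is cyclic, then $G = D_{2|A|}$ is dihedral and \cref{dihedral} finishes the proof, since $|A|$, having at most three prime factors counted with multiplicity, has at most three \emph{distinct} prime factors; this disposes of all the base cases. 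Hence I would assume $A$ is non-cyclic, which—given the bound on $|A|$—forces $A$ to be one of $\integer_p \times \integer_p$, $\integer_p \times \integer_p \times \integer_p$, $\integer_p \times \integer_{p^2}$, or $\integer_p \times \integer_p \times \integer_q$.

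By the standing inductive hypothesis on~$|G|$, every connected Cayley graph on a proper subgroup or proper quotient of~$G$ already has a hamiltonian cycle; so for any prime-order subgroup $N \le A$ (which is automatically normal), $\Cay(G/N;S)$ is hamiltonian, and $G/N$ is again of dihedral type with one fewer prime factor. The entire difficulty is therefore to \emph{lift}: to produce a hamiltonian cycle in a quotient whose net voltage generates the prime~$N$, or to invoke one of the two-generator lemmas directly. The tools I would keep in hand are \cref{FGL,MultiDouble,DoubleEdge,VoltageCor} for lifting through a prime~$N$, and \cref{Stud61,Stud71,direct-prod-cor} for the configurations where $S$ has few generators.

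I would organize the lifting by the number of reflections in~$S$. When $S$ contains exactly one reflection~$f_0$, the rotations $T = S \cap A$ must generate~$A$, so $T$ is a minimal generating set of the non-cyclic group~$A$ and $S = \{f_0\} \cup T$. Here I would delete a carefully chosen rotation $s_1 \in T$ so that $H = \langle S - \{s_1\}\rangle = \langle f_0\rangle \ltimes \langle T - \{s_1\}\rangle$ is a proper dihedral-type subgroup (hamiltonian by induction), pick a partner $s_2 \in T - \{s_1\}$, and verify the hypotheses of \cref{Stud71} (or its prime special case \cref{direct-prod-cor}); the point is to arrange $|s_1 s_2| = |G:H| = |A : \langle T - \{s_1\}\rangle|$ together with $\langle s_1 s_2\rangle \cap H = \{e\}$. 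For instance, for $A = \integer_p \times \integer_p$ or $\integer_p^3$ a basis element plays the role of~$s_1$ and the index is the prime~$p$, so \cref{direct-prod-cor} applies, whereas for $A = \integer_p \times \integer_{p^2}$ one deletes the element of order~$p^2$ and uses the full strength of \cref{Stud71} with index~$p^2$. When $S$ contains at least two reflections $f_0, f_1$, their product $f_0 f_1 \in A$ is a nontrivial rotation; if it has prime order I would take $N = \langle f_0 f_1\rangle$, note that $f_0 \equiv f_1 \pmod N$ (recall $f_0^{-1} = f_0$), and apply \cref{DoubleEdge} after checking that the image of $S$ is a minimal generating set of $G/N$. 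More generally I would hunt for any two elements of $S \cup S^{-1}$ congruent modulo a prime-order subgroup of~$A$, or for a double edge in $N\backslash\Cay(G;S)$ to feed \cref{MultiDouble}.

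The step I expect to be the main obstacle is precisely guaranteeing liftability in the residual configurations, where no two generators are congruent modulo any prime-order subgroup of~$A$ and \cref{DoubleEdge} is unavailable (for example, two reflections whose product has order~$p^2$ in $\integer_p \times \integer_{p^2}$). This is the genuinely new difficulty beyond the cyclic (dihedral) case, and it is concentrated in the non-cyclic Sylow subgroup: I must choose which order-$p$ line~$N$ of $\integer_p \times \integer_p$—or which subgroup of $\integer_p^3$, or which $\integer_p \times \integer_p$ inside $\integer_p^2 \times \integer_q$—to quotient by, and simultaneously pick a hamiltonian cycle of $\Cay(G/N;S)$ whose voltage escapes~$N$. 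In the hardest of these, the quotient $\Cay(G/N;S)$ is forced to be a prism or a non-bipartite M\"obius ladder, and I would lean on \cref{VoltageCor}; the delicate verification is then the parity/congruence side condition there (non-bipartiteness of the M\"obius ladder, or $2k \not\equiv 1 \pmod{|N|}$ for the prism), which must be read off from the precise orders of the generators rather than assumed.
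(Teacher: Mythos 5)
Your plan diverges from the paper's proof at the very first step, and that divergence is where the gap lies. The paper's opening move is to observe that, since every element of $fA$ inverts~$A$, one may assume $S \cap A = \emptyset$ (citing \cite[Thm.~5.3]{Witte-CayDiags}); that is, \emph{every} generator is a reflection and $f \in S$. Once all generators lie in the single coset $fA$, we have $st^{-1} \in A$ for all $s,t \in S$, so \cref{pkSubgrp} disposes of every case in which $A$ is a $p$-group: your cases $\integer_p \times \integer_p$, $\integer_p \times \integer_p \times \integer_p$ and $\integer_p \times \integer_{p^2}$ all vanish in one line, and the whole proposition reduces to $A \iso \integer_p \times \integer_p \times \integer_q$ with $S$ a set of reflections. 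You never invoke this reduction or \cref{pkSubgrp}; instead you propose a case analysis over all four non-cyclic possibilities for~$A$, organized by the number of rotations in~$S$, and that analysis is both much longer and incomplete as written (for example, with $A = \integer_p \times \integer_{p^2}$, one reflection, and two rotation generators each of order $p^2$, no choice of $s_1, s_2$ satisfies the order condition $|s_1s_2| = |G|/|\langle S - \{s_1\}\rangle|$ of \cref{Stud71}, so a different device is needed there).

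The ``main obstacle'' you defer at the end is a genuine unresolved gap, and the tools you name do not close it. In the surviving case $A \iso \integer_p \times \integer_p \times \integer_q$ the hard configuration is $S = \{f, f_1, f_2\}$ with all three generators reflections and $ff_1$, $ff_2$ of order $pq$; then $f_1f_2$ can also have order $pq$, so no two elements of $S \cup S^{-1}$ are congruent modulo any prime-order normal subgroup (each generator is its own inverse), \cref{DoubleEdge} never applies, and no quotient multigraph $H \backslash \Cay(G;S)$ with $|H|$ prime has a double edge, so \cref{MultiDouble} is unavailable as well. The quotient by a prime-order subgroup is a cubic Cayley graph generated by three involutions, and you give no argument that it is a prism or a M\"obius ladder, so \cref{VoltageCor} cannot simply be ``leaned on.'' The paper resolves exactly this configuration with a bespoke application of \cref{FGL}: since one of any four consecutive integers is coprime to $pq$, there is $k \in \{0,1,\ldots,p\}$ such that $(f_2f)^{p-k}(ff_1)^k$ generates $\langle ff_1 \rangle$, whence $\bigl( (f_2,f)^{p-k}, (f_2,f_1)^k \bigr)$ is a hamiltonian cycle. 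That construction, rather than a congruence or voltage-graph argument, is the missing idea.
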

 
 \begin{proof}
 Let $S$ be a minimal generating set of~$G$.
 Since every element of $fA$ inverts~$A$, it is easy to see that we may assume $S \cap A = \emptyset$ (cf.\ \cite[Thm.~5.3]{Witte-CayDiags}), and that $f \in S$.
	\begin{itemize}
	\item If $A$ is a $p$-group, then \cref{pkSubgrp} applies.
	\item If $A$ is cyclic, then $G$ is dihedral, so \cref{dihedral} applies.
	\end{itemize}
Thus, we may assume $A = \integer_p \times \integer_p \times \integer_q$, where $p$ and~$q$ are distinct primes.

Note that $fS - \{e\}$ must be a minimal generating set of~$A$.
 
 \setcounter{case}{0}
 \begin{case}
 Assume $fS$ contains an element~$x$ of order~$p$.
\end{case}
Then $fS - \{x\}$ must generate a subgroup of order~$pq$ (necessarily cyclic), so $\langle S - \{fx\} \rangle \iso D_{2pq}$; let $(s_1,s_2,\ldots,s_{2pq})$ be a Hamiltonian cycle 
in $\Cay \bigl( D_{2pq}; S-\{fx\} \bigr)$. We may assume $s_{2pq}=f$.  The vertices of the path $(s_1,s_2,\ldots,s_{2pq})\#$ are all in different right cosets of $\langle x \rangle$, so \cref{FGL(notnormal)} implies that $\bigl((s_1,s_2,\ldots,s_{2pq})\#,fx \bigr)^p$ is a hamiltonian
cycle in $\Cay(G;S)$.

\begin{case}
Assume $fS$ does not contain any element of order~$p$.
\end{case}
Then $S = \{f, f_1,f_2\}$, where $f f_1$ and $f f_2$ both have order~$pq$ (and $\langle f f_1, f f_2 \rangle = A =  \integer_p \times \integer_p \times \integer_q$). 
We may assume $p \ge 3$, for otherwise $[G,G] = \integer_q$, so \cref{KeatingWitte} applies. Then, since at least one of any four consecutive integers is relatively prime to $pq$, there exists $k \in \{0,1,2,\ldots,p\}$, such that $(f_2f)^p(ff_1)^k$ generates $\langle f f_1 \rangle$. This means that $(f_2 f)^{p-k} (f_2f_1)^k$ generates $\langle f f_1 \rangle$, so \cref{FGL} implies that $\bigl( (f_2 f)^{p-k}, (f_2f_1)^k \bigr)$ is a hamiltonian cycle in $\Cay \bigl( A; \{f_2f, f_2f_1 \} \bigr)$. Then it is clear that $\bigl( (f_2 , f)^{p-k}, (f_2, f_1)^k \bigr)$ is a hamiltonian cycle in $\Cay \bigl( G; \{f, f_1, f_2\} \bigr)$.
 \end{proof}
 
 \begin{cor} \label{2p2}
 If\/ $|G| = 2p^2$, where $p$ is prime, then every connected Cayley graph on~$G$ has a hamiltonian cycle.
 \end{cor}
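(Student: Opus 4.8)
The plan is to use the fact that a group of order~$2p^2$ is almost always a semidirect product $\integer_2 \ltimes A$, where $A$ is abelian of order~$p^2$, and then to split into cases according to how the involution acts on~$A$, citing a different earlier result in each case.

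First I would dispose of the prime-power case: if $p = 2$, then $|G| = 8 = 2^3$, so \cref{pk} applies. Hence assume $p$ is odd. A Sylow $p$-subgroup~$A$ then has index~$2$, so it is normal, and since $|A| = p^2$ it is abelian; thus either $A \iso \integer_{p^2}$ or $A \iso \integer_p \times \integer_p$. Because $\gcd(2,p^2) = 1$, Cauchy's Theorem supplies an involution $f \notin A$ with $G = \langle f \rangle \ltimes A$, and conjugation by~$f$ is an automorphism of~$A$ of order dividing~$2$.

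Next I would classify this automorphism and handle each possibility with a named result:
\begin{itemize}
\item \emph{$f$ acts trivially.} Then $G$ is abelian, so \cref{abel} applies.
\item \emph{$f$ inverts every element of~$A$.} Then $G$ is of dihedral type, and $|A| = p \cdot p$ is a product of at most three primes, so \cref{DihType} applies.
\item \emph{$A \iso \integer_p \times \integer_p$ and $f$ fixes one factor $\langle a \rangle$ while inverting the other $\langle b \rangle$.} Then $a$ is central, and a short computation gives $[f,b] = b^2$, which generates $\langle b \rangle$ since $p$ is odd, while $G/\langle b \rangle$ is abelian. Hence $[G,G] = \langle b \rangle \iso \integer_p$ is a cyclic $p$-group, so \cref{KeatingWitte} applies.
\end{itemize}

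The only real content is verifying that these three types are exhaustive, which is also the main (albeit mild) obstacle. When $A \iso \integer_{p^2}$ this is immediate, because $\Aut(\integer_{p^2})$ is cyclic and so contains exactly one involution, namely inversion. When $A \iso \integer_p \times \integer_p$ I must argue that an order-$2$ element of $\Aut(A) \iso GL(2,p)$ is, after a change of generators, one of $\operatorname{diag}(1,1)$, $\operatorname{diag}(1,-1)$, or $\operatorname{diag}(-1,-1)$; this follows because its minimal polynomial divides $x^2 - 1 = (x-1)(x+1)$, which is separable for $p \neq 2$, so the automorphism is diagonalizable with eigenvalues in $\{1,-1\}$. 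The one place a direct calculation is genuinely needed is recognizing that the ``mixed'' case yields a \emph{cyclic} commutator subgroup (so that \cref{KeatingWitte}, rather than \cref{DihType}, is the right tool), and that computation is the one-line commutator identity above.
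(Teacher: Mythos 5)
Your proof is correct and follows essentially the same route as the paper, which simply asserts the dichotomy ``either $[G,G]$ is cyclic of order~$p$ (so \cref{KeatingWitte} applies) or $G$ is of dihedral type (so \cref{DihType} applies)''; your case analysis of the involution's action on the Sylow $p$-subgroup is just the explicit verification that this dichotomy is exhaustive. Nothing is missing, and the one computation you flag as essential (the mixed case giving $[G,G]=\langle b\rangle\iso\integer_p$) is exactly what makes the paper's first alternative apply.
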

 
 \begin{proof}
 Either $[G,G]$ is cyclic of order~$p$ (so
\cref{KeatingWitte} applies) or $G$ is of dihedral type, so \cref{DihType} applies. 
\end{proof}

\begin{cor}[Jungreis-Friedman cf.\ {\cite[Thm.~5.1]{JungreisFriedman}}]  \label{QnHamCyc}
 If $n$ is the product of at most three primes {\upshape(}not necessarily distinct\/{\upshape)}, then every connected Cayley graph on any group of quaternion type of order~$4n$ has a hamiltonian cycle.
 \end{cor}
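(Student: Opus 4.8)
The plan is to pass to the quotient by the central involution arising from~$f$, which produces a group of dihedral type, and then to lift a hamiltonian cycle back up using double edges that are forced by the elements of order~$4$. By \cref{abel} we may assume $G$ is not abelian, which rules out the degenerate small cases. Write $z = f^2$. Since $f$ inverts~$A$, the square $z = f^2$ acts trivially on~$A$ and of course commutes with~$f$, so $z$ is a central involution; put $N = \langle z \rangle$, a normal subgroup of prime order~$2$ with $N \subseteq A$. I claim $\bar G := G/N$ is of dihedral type: the image $\bar A := A/N$ is abelian of index~$2$ in $\bar G$, the image $\bar f$ has order~$2$ (because $f^2 = z \in N$) and lies outside $\bar A$ (because $f \notin A$ and $N \subseteq A$), and $\bar f$ inverts $\bar A$; thus $\bar G \iso \integer_2 \ltimes \bar A$. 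Moreover $|\bar A| = |A|/2 = n$ is a product of at most three primes, so \cref{DihType} guarantees that every connected Cayley graph on~$\bar G$ has a hamiltonian cycle.

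The key observation is that every element outside~$A$ has order~$4$ with square~$z$: for $a \in A$ we have $(fa)^2 = f^2\,(f^{-1}af)\,a = z\,a^{-1}a = z$. Consequently, if $s \in (S \cup S^{-1}) \setminus A$, then $s^{-1} = sz$, so $s$ and~$s^{-1}$ are \emph{distinct} elements of $S \cup S^{-1}$ that are congruent modulo~$N$. Hence, from any coset $Ng$, both of them lead to the single coset $N(gs)$ (since $gs^{-1} = gsz \in N(gs)$), so they determine a double edge between $Ng$ and $N(gs)$ in the quotient multigraph $N \backslash \Cay(G;S)$.

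To finish, let $\bar S$ be the image of~$S$ in~$\bar G$, and regard a hamiltonian cycle of the connected graph $\Cay(\bar G;\bar S)$ as a hamiltonian cycle~$C$ in $N \backslash \Cay(G;S)$. Because $\bar A$ has index~$2$ in~$\bar G$ and both of its cosets are nonempty, the cycle~$C$ must pass between the two cosets of~$\bar A$; any step that does so uses (the image of) some $s \in (S \cup S^{-1}) \setminus A$, since the images of the elements of $S \cap A$ preserve each coset. By the previous paragraph, such a step is a double edge of $N \backslash \Cay(G;S)$, so $C$ uses a double edge. Therefore \cref{MultiDouble} (with $H = N$, which has prime order~$2$) yields a hamiltonian cycle in $\Cay(G;S)$.

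The step that I expect to require the most care is the last one: one must be certain that the hamiltonian cycle of the quotient genuinely traverses an edge coming from an element of order~$4$ — equivalently, that it changes $\bar A$-cosets — which is exactly what the index-$2$ parity argument supplies, together with the fact that no generator lying in~$A$ can switch cosets. Everything else is either the structural bookkeeping needed to identify the central quotient as a group of dihedral type, or a direct appeal to \cref{DihType} and \cref{MultiDouble}; in particular, the observation that $s^{-1} = sz$ for elements of order~$4$ is what guarantees that the two parallel edges differ precisely by the generator of~$N$, so that one of the two liftings has the voltage needed by the Factor Group Lemma.
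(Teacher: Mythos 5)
Your argument is correct and follows essentially the same route as the paper: pass to the quotient by the central involution $\langle f^2\rangle$, recognize the quotient as a group of dihedral type, invoke \cref{DihType}, and lift through a double edge created by an order-$4$ generator. The only cosmetic difference is that the paper applies \cref{DoubleEdge} with $s = f = t^{-1}$ (relying on minimality of~$S$), whereas you apply \cref{MultiDouble} directly and use the index-$2$ parity argument to guarantee the lifted cycle crosses a double edge; both justifications are valid.
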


\begin{proof}
Let $\Cay(G;S)$ be such a Cayley graph, and assume, without loss of generality, that $S$ is a minimal generating set for~$G$. Let $A$ be an abelian subgroup of index~$2$ in $G$, and let $f \in S$, with $f \notin A$. Then $\langle f^2 \rangle$ is a normal subgroup of order~$2$ in~$G$. Furthermore, $G/\langle f^2 \rangle$ is of dihedral type, so \cref{DihType} implies there is a hamiltonian cycle in $\Cay \bigl( G/\langle f^2 \rangle ; S \bigr)$. Therefore, \cref{DoubleEdge} applies with $s = f = t^{-1}$.
\end{proof}

\begin{rem}
If $G$ is a group of dihedral type, and $|G|$ is divisible by~$4$, then B.\,Alspach, C.\,C.\,Chen, and M.\,Dean \cite{AlspachChenDean} have shown that every connected Cayley graph on~$G$ has a hamiltonian cycle. In fact, the Cayley graphs are hamiltonian connected (or hamiltonian laceable when they are bipartite).
\end{rem}

\subsection{Generator in a cyclic, normal subgroup}

The following observation is well known.

\begin{lem}
\label{NormalEasy}
 Let $S$ generate $G$ and let $s \in S$, such that $\langle s \rangle
\normal G$. If
 \begin{itemize}
 \item $\Cay \bigl( G/\langle s \rangle ; S \bigr)$ has a
hamiltonian cycle,
 and
 \item either
 \begin{enumerate}
 \item \label{NormalEasy-Z} 
 $s \in Z(G)$,
 or
 \item \label{NormalEasy-notZ} 
 $Z(G) \cap \langle s \rangle = \{e\}$,
 or
 \item \label{NormalEasy-p}
 $|s|$ is prime,
 \end{enumerate}
 \end{itemize}
 then $\Cay(G;S)$ has a hamiltonian cycle.
 \end{lem}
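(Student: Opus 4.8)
The plan is to apply the Factor Group Lemma (\cref{FGL}) with the cyclic normal subgroup $N = \langle s\rangle$. Fix a hamiltonian cycle $(s_1 N, \ldots, s_n N)$ in $\Cay(G/N;S)$, whose existence is assumed, and set $g = s_1 s_2 \cdots s_n$. Since the cycle closes up in the quotient, $g \in N$. If $g$ generates~$N$, then \cref{FGL} immediately produces the hamiltonian cycle $(s_1,\ldots,s_n)^{|N|}$ in $\Cay(G;S)$, and we are done. So the whole difficulty is concentrated in the case where $g$ fails to generate~$N$; the approach is to show that each of the three hypotheses lets us repair this.

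When $g$ does not generate~$N$, lifting the quotient cycle yields not a single cycle but a spanning disjoint union of $d = |N|/|\langle g\rangle|$ cycles, each of length $n\,|\langle g\rangle|$. The first observation I would establish is that the generator $s$ itself (which maps to a loop in the quotient, hence is unused by the quotient cycle) supplies edges joining these $d$ fragments: right translation by~$s$ shifts the fragment-label of a vertex by one in $N/\langle g\rangle \iso \integer_d$, so the $d$ fragments are linked into a single cyclic chain. The task then becomes a cycle-merging argument: to fuse two consecutive fragments $C$ and~$C'$, I would locate an edge $uv$ of~$C$ (coming from some~$s_i$) whose endpoints' $s$-translates $us$ and~$vs$ form an edge of~$C'$, and perform the standard $2$-swap that deletes $uv$ and the edge $(us)(vs)$ while inserting the two $s$-edges $u\,$–$\,us$ and $v\,$–$\,vs$; repeating around the chain merges all $d$ fragments into one hamiltonian cycle.

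I would then dispatch the three cases by verifying that the required matched pair of edges exists. In case~\pref{NormalEasy-p}, $|s|$ is prime, so either $g \ne e$ (handled directly by \cref{FGL}) or $g = e$, in which case $d = |s|$, every fragment is a translate of the quotient cycle, and the matching $s$-edges are immediate. In case~\pref{NormalEasy-Z}, $s$ is central, so the edge $vs$ of~$C'$ is literally the $s_i$-image of~$us$ (the generator $s_i$ commutes with~$s$); equivalently, when $g = e$ the $s$-edges and the cycle edges form a spanning subgraph isomorphic to the Cartesian product of the quotient cycle with $\Cay\bigl(\langle s\rangle;\{s\}\bigr)$, which is hamiltonian. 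Case~\pref{NormalEasy-notZ} is where I expect the real obstacle: conjugation by~$s_i$ sends $s$ to a power $s^a$ with $\gcd(a,|s|)=1$, so the $s$-translate of an edge is twisted by this automorphism and the naive matched pair need no longer exist. The hypothesis $Z(G)\cap\langle s\rangle=\{e\}$ is precisely what controls this twisting, since it forbids any nontrivial power of~$s$ from being conjugation-invariant; the crux of the proof is to exploit this to guarantee that, somewhere along each fragment, a correctly matched pair of $s$-edges can still be found so that the $2$-swap goes through.
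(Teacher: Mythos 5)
Your case \pref{NormalEasy-Z} is essentially the paper's argument (a spanning subgraph isomorphic to the Cartesian product of a path covering the quotient cycle with $C_{|s|}$). The genuine gap is case \pref{NormalEasy-notZ}, which you correctly identify as the crux but do not prove --- and the matched-pair $2$-swap you propose for it cannot succeed. Writing $v_i = s_1\cdots s_i$ and $v_i s v_i^{-1} = s^{a_i}$, the translates $v_i s$ and $v_{i+1}s$ are joined by an $s_{i+1}$-edge if and only if $a_i = a_{i+1}$, i.e., if and only if $s_{i+1}$ centralizes~$s$; if no generator centralizes~$s$, no matched pair exists anywhere. Concretely, let $G = \integer_3 \ltimes \integer_7 = \langle t \rangle \ltimes \langle s \rangle$ with $tst^{-1} = s^2$ and $S = \{s,t\}$: the quotient hamiltonian cycle is $(t,t,t)$ with product~$e$, its lift is seven disjoint triangles, and the three $s$-edges leaving any one triangle land in three \emph{different} triangles at pairwise non-adjacent vertices, so no $2$-swap of the kind you describe is available. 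The same example refutes your treatment of case \pref{NormalEasy-p}: when the product is trivial and $s$ is not central, the matching $s$-edges are not ``immediate.'' (The correct disposal of \pref{NormalEasy-p} is the paper's: $Z(G) \cap \langle s \rangle$ is a subgroup of a group of prime order, hence is trivial or all of $\langle s \rangle$, so \pref{NormalEasy-Z} or \pref{NormalEasy-notZ} applies.) Your preliminary claim that right translation by~$s$ shifts the fragment label ``by one'' fails for the same reason: the shift is by $s^{a_i}$ modulo $\langle s_1\cdots s_n\rangle$, which varies with~$i$.

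What the paper actually does in case \pref{NormalEasy-notZ} is not a local edge exchange but a global construction: with $m = |\langle s \rangle| / |s_1 s_2 \cdots s_n|$, the walk $(s^{m-1}, s_1, s^{m-1}, s_2, \ldots, s^{m-1}, s_n)^k$ inserts $m-1$ copies of~$s$ before every step of the quotient cycle. That this walk closes up and covers~$G$ rests on the algebraic identity that, for $t \in \langle s \rangle$ and coset representatives $g_0, \ldots, g_{n-1}$ of $\langle s\rangle$ in~$G$, the product $t^{g_0} t^{g_1} \cdots t^{g_{n-1}}$ lies in $Z(G)$ (conjugating it by any element of~$G$ merely permutes its factors, which commute), and is therefore trivial by the hypothesis $Z(G) \cap \langle s \rangle = \{e\}$. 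You would need an argument of this kind, or a genuinely different one; as it stands, the heart of the lemma is missing.
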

 
 \begin{proof}
 Let $(s_1,s_2,\ldots,s_n)$ be a hamiltonian cycle in $\Cay \bigl( G/\langle s \rangle ; S \bigr)$, and let $k = |s_1 s_2\cdots s_n|$, so $(s_1,s_2,\ldots,s_n)^k$ is a cycle in $\Cay(G;S)$. 
 
  \pref{NormalEasy-Z} Since $s \in Z(G)$, it is easy to see that $\Cay(G;S)$ contains a spanning subgraph isomorphic to the Cartesian product $P_{n} \times C_{|s|}$ of a path with~$n$ vertices and a cycle with $|s|$ vertices. Since it is easy to see that this Cartesian product is hamiltonian \cite[Cor.\ on p.~29]{ChenQuimpo-hamconn}, we conclude that $\Cay(G;S)$ has a hamiltonian cycle.

 \pref{NormalEasy-notZ} Let $m = |G|/(nk)$. We claim that
 	$$ \bigl( s^{m-1}, s_1, s^{m-1}, s_2,s^{m-1}, \ldots, s^{m-1}, s_n \bigr)^k $$
is a hamiltonian cycle in $\Cay(G;S)$. 

Let 
	$$ \text{$g_i = (s_1s_2\cdots s_i)^{-1}$ for $0 \le i \le n$, so $g_i g_{i+1}^{-1} = s_{i+1}$,} $$
and note that, since $(s_1,s_2,\ldots,s_n)$ is a hamiltonian cycle, we know that 
	$$ \text{$\{1,g_1,g_2,\ldots, g_{n-1}\}$ is a complete set of coset representatives for $\langle s \rangle$ in~$G$.} $$
Then, for any $h \in G$, 
	$$ \text{$\{h,g_1h,g_2h,\ldots, g_{n-1}h\}$ is also a set of coset representatives.} $$
Also, since $\langle s \rangle$ is abelian, we know that if $x$ and~$y$ are elements in the same coset of $\langle s \rangle$, then $s^x = s^y$. Thus, for any $t \in \langle s \rangle$, we have
	$$ \{ t , t^{g_1} ,  t^{g_2} , \ldots , t^{g_{n-1}}  \} = \{ t^h , t^{g_1h} ,  t^{g_2h} , \ldots , t^{g_{n-1}h}  \}, $$
so 
	$$ t t^{g_1}  t^{g_2} \cdots t^{g_{n-1}} =  t^h  t^{g_1h}   t^{g_2h}  \cdots  t^{g_{n-1}h},$$
because both products have exactly the same factors (but possibly in a different order). Since the right-hand product is $(t t^{g_1}  t^{g_2} \cdots t^{g_{n-1}})^h$, and $h$~is an arbitrary element of~$G$, we conclude that $t t^{g_1}  t^{g_2} \cdots t^{g_{n-1}} \in Z(G)$. Since $Z(G)$ has trivial intersection with $\langle s \rangle$, this implies that 
	$$t t^{g_1}  t^{g_2} \cdots t^{g_{n-1}} = e .$$
Therefore
	$$ (s^{m-1}) s_1 (s^{m-1}) s_2 \cdots  (s^{m-1}) s_n 
	=  \bigl( (s^{m-1}) (s^{m-1})^{g_1}  (s^{m-1})^{g_2} \cdots (s^{m-1})^{g_{n-1}}  \bigr) g_n^{-1} 
	= g_n^{-1}. $$
Therefore
	$$ \bigl( (s^{m-1}) s_1 (s^{m-1}) s_2 \cdots  (s^{m-1}) s_n \bigr)^k 
	= g_n^{-k} 
	= (s_1 s_2 \cdots s_n)^k
	= e ,$$
so the walk is closed. Furthermore, since $m = |\langle s \rangle/\langle g_n \rangle|$, it is clear that the walk visits every element of $\langle s \rangle$, and it is similarly easy to see that it visits every element of all of the other cosets. So it visits every element of~$G$.

Since it is also a closed walk of the correct length, we conclude that it is a hamiltonian cycle. 
 
  \pref{NormalEasy-p} 
 Since $|s|$ is prime, either \pref{NormalEasy-Z} or \pref{NormalEasy-notZ} must apply.
 \end{proof}

The following related result is much less obvious.

\begin{thm}[Alspach {\cite[Thm.~3.7]{Alspach-lifting}}]
\label{AlspachLifting}
 Suppose
 \begin{itemize}
 \item $S$ is a generating set of~$G$,
 \item $s \in S$,
 \item $\langle s \rangle \normal G$,
 \item $|G : \langle s \rangle|$ is odd,
 and
 \item $\Cay \bigl( G/ \langle s \rangle; S \bigr)$ has a hamiltonian
cycle.
 \end{itemize}
 Then $\Cay(G;S)$ has a hamiltonian cycle.
 \end{thm}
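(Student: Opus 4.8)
The plan is to deduce this from the theory of voltage graphs, exactly as advertised just before the statement: I would reduce, via the Factor Group Lemma (\cref{FGL}), to producing a single hamiltonian cycle in the quotient whose voltage generates $\langle s \rangle$. Write $N = \langle s \rangle$, so that $N \cong \integer_m$ is cyclic and normal; the covering projection $\Cay(G;S) \to \Cay(G/N;S)$ is then a connected regular covering whose deck group is $N$ acting by left multiplication (left multiplication by $N$ preserves each right coset $Ng$ and is simply transitive on it). Thus $\Cay(G;S)$ is a connected cyclic cover of $Q := \Cay(G/N;S)$ with voltage group $\integer_m$, and by \cref{FGL} it suffices to exhibit a hamiltonian cycle of~$Q$ whose lifted voltage has order~$m$ (equivalently, generates~$N$).

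I would start from the given hamiltonian cycle $\bar H = (\bar s_1, \dots, \bar s_n)$ of~$Q$, whose voltage is $v = s_1 s_2 \cdots s_n \in N$. If $v$ already generates~$N$ we are done immediately. In general, the lift of~$\bar H$ is a $2$-factor of $\Cay(G;S)$ consisting of $m/|v|$ disjoint cycles, indexed by the cosets of $\langle v \rangle$ in~$N$; the task is to merge these into one cycle by locally rerouting $\bar H$ so as to change its voltage until it becomes a generator. Because the cover $\Cay(G;S)$ is connected, the voltages of closed walks in~$Q$ generate all of~$\integer_m$, so rerouting along such walks can in principle realise any target voltage; the real content is to perform the modification while keeping the walk in~$Q$ a hamiltonian cycle.

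This is where the hypothesis that the index $n = |G:N|$ is odd enters, and it is the crux of the argument. The base hamiltonian cycle $\bar H$ has length~$n$, so $n$ odd means $\bar H$ is an odd (hence non-bipartite) cycle. This is precisely the non-bipartite, M\"obius-ladder situation of \cref{VoltageCor}: for an odd base cycle the parity obstruction that blocks the bipartite (prism) case disappears, and a suitable rerouting of~$\bar H$ always exists. Concretely, I would reduce to the case that $|N|$ is prime (replacing $N$ by its subgroups, which are characteristic in the cyclic group~$N$ and hence normal in~$G$), and then, using the edges coming from~$s$ (which close up cycles transverse to the sheets of the cover), splice consecutive sheets together along~$\bar H$, the odd length guaranteeing that the splicing closes into a single hamiltonian cycle rather than breaking into two.

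The step I expect to be the main obstacle is exactly this last, explicit merging and parity bookkeeping in the prime case: one must choose where along the odd cycle~$\bar H$ to insert the transverse $s$-moves so that the resulting closed walk both visits every coset the correct number of times and has product generating~$N$. This is the technical heart of Alspach's voltage-graph argument, and the place where all of the hypotheses---connectedness, cyclicity of~$N$, and the oddness of the index---are used together.
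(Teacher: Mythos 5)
The paper offers no proof of this theorem: it is quoted verbatim from Alspach's paper, with only the citation, so there is no in-house argument to measure yours against. Judged on its own terms, your proposal is an outline rather than a proof. The step you yourself flag as ``the main obstacle'' --- the explicit splicing of the sheets and the parity bookkeeping --- is not a technical loose end; it is the entire content of the theorem, and nothing in the proposal carries it out.

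Moreover, two of the steps you do commit to would fail. First, the opening reduction via \cref{FGL} (``it suffices to exhibit a hamiltonian cycle of $Q$ whose voltage generates $N$'') is a dead end, because such a cycle need not exist: take $G = \integer_3 \times \integer_m$ with $S = \{x,s\}$, $x = (1,0)$, $s = (0,1)$, $N = \langle s \rangle$. The quotient is a triangle, its only hamiltonian cycles are $(\bar x,\bar x,\bar x)$ and its reverse, and both have trivial voltage since $|x|=3$; yet all hypotheses of the theorem hold. Any correct proof must use the $s$-edges themselves, and these project to loops in the quotient, so once they are inserted the projected walk is no longer a hamiltonian cycle of $Q$ and \cref{FGL} no longer applies --- one needs \cref{FGL(notnormal)} with respect to a different subgroup, or a direct construction in the spanning subgraph formed by the lift of the quotient cycle together with the $|s|$-cycle that the $s$-edges trace out on each fibre (which is where Alspach actually works, and where the oddness of the index is used to close the cycle up, much as the non-bipartite alternative is the good one in \cref{VoltageCor}). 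Second, the reduction to $|N|$ prime does not go through by the induction you sketch: the intermediate subgroups of $N$ are generated by proper powers of $s$, which need not belong to $S$, so after passing to $G/K$ for $\{e\} < K < N$ the hypothesis ``the normal subgroup is generated by an element of $S$'' fails for the remaining lifting step and the inductive hypothesis cannot be invoked. Your instinct about where the oddness hypothesis must enter is sound, but as written the proposal contains neither a valid reduction nor the construction it defers to.
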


 A well-known theorem of  B.\,Alspach \cite{AlspachGP} 
 describes exactly which generalized Petersen graphs have a hamiltonian cycle. We need only the following consequence of this very precise result.
 
 \begin{thm}[B.\,Alspach {\cite{AlspachGP}}] \label{GenPet}
 Suppose $X$ is a generalized Petersen graph that is connected, and has $2n$~vertices.
 If $n \not\equiv 0 \pmod{4}$ and $n \not \equiv 5 \pmod{6}$, then $X$ has a hamiltonian cycle.
 \end{thm}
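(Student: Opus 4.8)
The plan is not to establish hamiltonicity from scratch but to read the conclusion off B.~Alspach's complete classification \cite{AlspachGP}, which is exactly how the statement is framed (``we need only the following consequence''). Write the connected generalized Petersen graph as $P(n,k)$, with outer cycle $(u_0,u_1,\ldots,u_{n-1})$, spokes $u_iv_i$, and inner edges $v_iv_{i+k}$, where $1\le k\le n/2$. The first step is simply to recall precisely which of the graphs $P(n,k)$ fail to be hamiltonian.

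Alspach's theorem says that a connected $P(n,k)$ has no hamiltonian cycle in exactly two situations: (a) $k<n/2$ and $P(n,k)$ is isomorphic to $P(n,2)$ with $n\equiv 5\pmod{6}$; and (b) the degenerate case $k=n/2$ (so that $n$ is even and each inner vertex has degree~$2$) with $n\equiv 0\pmod{4}$. Granting this, the whole proof becomes a congruence check: the hypothesis $n\not\equiv 5\pmod{6}$ eliminates family~(a), and the hypothesis $n\not\equiv 0\pmod{4}$ eliminates family~(b), so no exceptional graph occurs and $P(n,k)$ is hamiltonian.

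I would spell out why family~(b) depends on the residue of $n$ modulo~$4$, since this is the part of the classification easiest to overlook. When $k=n/2$, the only neighbours of $v_i$ are $u_i$ and $v_{i+n/2}$, so every hamiltonian cycle is forced to use all $n$ spokes and all $n/2$ inner edges; equivalently it must contain each ``diameter path'' $u_i\,v_i\,v_{i+n/2}\,u_{i+n/2}$. Completing these forced paths to a single cycle amounts to choosing one of the two perfect matchings $\tau$ of the outer cycle $C_n$ so that the permutation $i\mapsto\tau(i+n/2)$ of the outer vertices is a single cycle rather than a union of shorter ones. Tracking this permutation shows that one obtains a hamiltonian cycle when $n\equiv 2\pmod{4}$, but a proper union of cycles when $n\equiv 0\pmod{4}$ (with $n\ge 8$; the tiny case $n=4$ is a harmless coincidence, and is excluded anyway by the congruence hypothesis).

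The main obstacle is thus bookkeeping rather than a hard combinatorial argument. First, one must use the isomorphisms $P(n,k)\iso P(n,k')$ available when $kk'\equiv\pm 1\pmod{n}$ to reduce an arbitrary $k<n/2$ to the normal form $k=2$, so that family~(a) really is the only obstruction in that range. Second, one must handle the non-regular graphs $P(n,n/2)$ separately, as above. Once Alspach's classification is invoked, the deduction under the stated hypotheses is immediate.
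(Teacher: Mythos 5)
Your proposal is correct and matches the paper's treatment exactly: the paper offers no proof of this statement, but simply records it as the needed consequence of Alspach's classification in \cite{AlspachGP}, which is precisely the deduction you carry out (and your recollection of the two exceptional families --- $P(n,2)$ with $n\equiv 5 \pmod{6}$, and $P(n,n/2)$ with $n\equiv 0\pmod 4$, $n\ge 8$ --- is accurate). Your added analysis of the degenerate $k=n/2$ case is a correct bonus, not a deviation.
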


 \subsection{A few small groups}
 
For future reference, we record the existence of hamiltonian cycles in every connected Cayley graph on the groups $S_4$, $A_4 \times \integer_2$, $A_4 \times \integer_3$, and $A_5$. Only a few non-isomorphic Cayley graphs arise on each group, and a computer search could quickly find a hamiltonian cycle in each of them, so, for brevity, we omit some details of the proofs.

\begin{lem}[{}{\cite[Thm.~8.2]{JungreisFriedman}}] \label{S4}
Every connected Cayley graph on the symmetric group $S_4$ has a hamiltonian cycle.
\end{lem}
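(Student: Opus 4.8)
The plan is to reduce to minimal generating sets and then dispatch the short resulting list of Cayley graphs with the non-normal factor-group lemmas of this section. If $S_0 \subseteq S$ generates $G$, then $\Cay(G;S_0)$ is a spanning subgraph of $\Cay(G;S)$, so a hamiltonian cycle for $S_0$ is also one for $S$; hence we may assume $S$ is a minimal generating set. Since the longest irredundant generating sequence of $S_4$ has length~$3$, this leaves only the cases $\#S = 2$ and $\#S = 3$, and (because every automorphism of $S_4$ is inner) only a handful of graphs up to isomorphism. Throughout we exploit that $S_4$ has a unique index-$2$ subgroup $A_4 \iso \integer_3 \ltimes (\integer_2 \times \integer_2)$, every Cayley graph on which is hamiltonian by \cref{4p} (or \cref{Z3kx(Z2xZ2)}), and that its Sylow $2$-subgroups are copies of $D_8$, whose Cayley graphs are hamiltonian by \cref{dihedral}.

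For $\#S = 3$, at least one generator is odd, since $S \not\subseteq A_4$. The main step is to delete a single generator $s_1$ so that $H = \langle S - \{s_1\}\rangle$ is one of these known-hamiltonian subgroups, of prime index $|G:H| \in \{2,3\}$, and then to produce $s_2 \in S - \{s_1\}$ with $|s_1 s_2| = |G:H|$; \cref{direct-prod-cor} (or \cref{Stud71}) then lifts a hamiltonian cycle of $H$ to one of $S_4$. The typical situation is $H = A_4$, where one seeks $s_2$ with $s_1 s_2$ a transposition. For $\#S = 2$, write $S = \{s_1,s_2\}$ and apply \cref{Stud61}. For instance, with $s_1 = (1234)$ and $s_2 = (12)$ a direct computation shows $[s_1,s_2]$ is a $3$-cycle, so $2\,|s_1|\,|[s_1,s_2]| = 2 \cdot 4 \cdot 3 = 24 = |G|$; the remaining hypotheses hold because $|s_1| = 4$ and $|[s_1,s_2]| = 3$ are coprime (giving the two intersection conditions) and $s_2 = (12) \notin \langle s_1\rangle\langle[s_1,s_2]\rangle$. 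The other generating pairs are handled by the same lemma after a suitable choice of which generator to call~$s_1$.

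The main obstacle is that $S_4$ has no nontrivial cyclic normal subgroup: its only proper nontrivial normal subgroups are the Klein subgroup $\integer_2 \times \integer_2$ and $A_4$, neither of which is cyclic. Consequently the Factor Group Lemma (\cref{FGL}) and its easy corollaries are unavailable, and we are forced onto the more delicate \cref{Stud61}, \cref{Stud71}, and \cref{direct-prod-cor}, whose coset-separation and intersection hypotheses must be checked by hand for each generating set. For the few generating sets on which none of these lemmas applies — for example when every candidate product $s_1 s_2$ has the wrong order — one instead exhibits a hamiltonian cycle directly via \cref{FGL(notnormal)}, or simply verifies it by the computer search mentioned above; since only a small number of non-isomorphic Cayley graphs occur, this is routine.
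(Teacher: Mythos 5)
Your overall strategy --- reduce to a minimal generating set, split on $\#S \in \{2,3\}$, and dispatch the resulting short list with \cref{Stud61}, \cref{Stud71}, and \cref{direct-prod-cor}, falling back on explicit cycles or a computer check --- is the same as the paper's, and your worked example $\{(1,2,3,4),(1,2)\}$ matches the paper's treatment. However, two of your blanket claims fail for identifiable generating sets. First, for $\#S=2$ you assert that ``the other generating pairs are handled by the same lemma after a suitable choice of which generator to call $s_1$.'' This is false for $S=\{(1,2),(2,3,4)\}$: for either ordering, $[s_1,s_2]$ is a $3$-cycle, so $2\,|s_1|\cdot|[s_1,s_2]|$ equals $12$ or $18$, never $24=|G|$, and the first hypothesis of \cref{Stud61} cannot be met (replacing generators by their inverses changes neither $|s_1|$ nor $|[s_1,s_2]|$). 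The paper settles this pair by exhibiting the explicit hamiltonian cycle $\bigl((a,b^2)^2,(a,b^{-2})^2\bigr)^2$.

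Second, for $\#S=3$ your plan of deleting a generator to reach a subgroup of prime index $2$ or~$3$ breaks down when $S$ consists entirely of transpositions, e.g.\ $S=\{(1,2),(2,3),(3,4)\}$ or $S=\{(1,2),(1,3),(1,4)\}$: every $2$-element subset then generates a copy of $S_3$ or of $\integer_2\times\integer_2$, of index $4$ or~$6$, and no product of two generators has order $4$ or~$6$, so neither \cref{direct-prod-cor} nor \cref{Stud71} applies. (The paper disposes of these by citing the known result that Cayley graphs of $S_n$ on transposition generating sets are hamiltonian.) Your closing fallback of direct verification does cover such leftovers and is consistent with the paper's own stated standard for this lemma, but as written your argument claims the lemmas apply where they provably do not, so these cases must be identified and handled separately. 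Where your prime-index scheme does apply, it is occasionally cleaner than the paper: for $S=\{(1,2)(3,4),(1,2,3),(1,2)\}$, taking $s_1=(1,2)$ and $s_2=(1,2)(3,4)$ gives $\langle S-\{s_1\}\rangle = A_4$ of index $2=|s_1s_2|$, so \cref{direct-prod-cor} applies directly, whereas the paper builds a hamiltonian path in $A_4$ by hand.
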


\begin{proof}
Suppose $S$ is a minimal generating set of $S_4$. Note that $S$ must contain an odd permutation; that is, $S$ contains either a 2-cycle or 4-cycle.  

\setcounter{case}{0}

\begin{case}
Assume $\#S = 2$.
\end{case}
Write $S = \{a,b\}$.

If $a$~is a $4$-cycle, then we may assume 
	$$ \text{$a = (1,2,3,4)$ and $b \in \{(1,2), (1,2,3),(1,2,4,3)\}$.} $$
  In each case, \cref{Stud61} provides a hamiltonian cycle in $\Cay(S_4;S)$.

Now suppose $S$ contains no $4$-cycles. Then we may assume $a = (1,2)$ and $b = (2,3,4)$.  In this case, a hamiltonian cycle is given by 
	$ \bigl( (a, b^2)^2, (a, b^{-2})^2 \bigr)^2 $.

\begin{case}
Assume $\#S \ge 3$.
\end{case}
Since $S$ is minimal, it is easy to see that $\#S = 3$; write $S = \{a,b,c\}$.

\begin{subcase}
Assume $a = (1,2)(3,4)$.  
\end{subcase}
Let $N$ be the normal subgroup of order~$4$ that contains~$a$. 
Then, since $\langle S \rangle = S_4$, we have $\langle b,c \rangle N = S_4$. Furthermore, since the action of $S_4/N$ on~$N$ is irreducible, the minimality of~$S$ implies $ \langle b,c \rangle \cap N$ is trivial. So $\langle b , c \rangle \iso S_4/N \iso S_3$. Then, conjugating by a power of $(1,3,2,4)$ (which centralizes~$a$), there is no harm in assuming that $\langle b,c \rangle = S_3$. So we may assume $\{b,c\}$ is either
	$$ \text{$\{(1,2), (2,3)\}$ or $\{(1,3),(2,3)\}$ or $\{(1,2,3), (1,2)\}$ or $\{(1,2,3),(2,3)\}$} .$$
 If $(2,3) \in S$, then \cref{Stud71} applies with $s_1 = (1,2)(3,4)$ and $s_2 = (2,3)$.  In the
remaining case, let $b = (1,2,3)$ and $c = (1,2)$, and let $L$ be any hamiltonian path in $\Cay \bigl( A_4; \{a,b\} \bigr)$ from~$e$ to~$b$. Then $(L,c)^2$ is a hamiltonian cycle in $\Cay(S_4;S)$.

\begin{subcase}
Assume $S$ does not contain any even permutation of order~$2$.
\end{subcase}
Then, since $S$ is a 3-element, minimal generating set, it is not difficult to see that $S$ cannot
contain a $4$-cycle.  So $S$ consists entirely of $2$-cycles and $3$-cycles. However, it is known that there is a hamiltonian cycle in $\Cay(S_n:S)$ whenever $S$ consists entirely of $2$-cycles (see the discussion and references on p.~622 of \cite{Savage-GraySurvey}), so we may assume that $S$ contains at least one $3$-cycle. Then, up to automorphism, we have
	$$ S = \{(1,2,3),(1,2,4),(1,2)\} .$$
Let $a = (1,2,3)$, $b = (1,2,4)$, and $c = (1,2)$, and let $L$ be any hamiltonian path in $\Cay \bigl( A_4; \{a,b\} \bigr)$ from~$e$ to~$b$. Then $(L,c)^2$ is a hamiltonian cycle in $\Cay(S_4;S)$.
\end{proof}

We actually need only the cases $p = 2$ and $p = 3$ of the following result, but the general case is no more difficult to prove.

\begin{lem}[{}{Jungreis-Friedman \cite[Thm.~7.4]{JungreisFriedman}}] \label{A4xZp}
If $p$~is prime, then every connected Cayley graph on $A_4 \times \integer_p$ has a hamiltonian cycle.
\end{lem}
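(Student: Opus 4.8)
The plan is to exploit the central decomposition $G = A_4 \times \integer_p$. Write $V \iso \integer_2\times\integer_2$ for the normal Sylow $2$-subgroup (which is $[G,G]$), and put $P = \{e\}\times\integer_p$, so that $P = Z(G)$ is a normal cyclic subgroup of prime order~$p$ with $G/P \iso A_4$. Let $S$ be a minimal generating set of~$G$, and let $\bar S$ denote its image in~$A_4$. Since $\bar S$ generates $A_4$ and $|A_4| = 4\cdot 3$, \cref{4p} (or \cref{Z3kx(Z2xZ2)}, since $A_4 \iso \integer_3 \ltimes(\integer_2\times\integer_2)$) provides a hamiltonian cycle in $\Cay(A_4;\bar S)$. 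Because $P$ is central, the $A_4$-components multiply to the identity around any closed walk, so the product of the generators around such a cycle automatically lies in~$P$; hence by \cref{FGL} it suffices to produce a hamiltonian cycle of $\Cay(A_4;\bar S)$ whose net voltage is a nontrivial, hence generating, element of~$P \iso \integer_p$.

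First I would dispose of the cases in which the quotient multigraph $P\backslash\Cay(G;S) = \Cay(A_4;\bar S)$ carries a double edge, i.e.\ in which two distinct elements of $S\cup S^{-1}$ have the same image in~$A_4$. (This happens, for instance, whenever two generators are congruent modulo~$P$, or when $p$ is odd and some $s\in S$ has its $A_4$-component of order~$2$ but nonzero $z$-component.) A hamiltonian cycle of the quotient that traverses such a double edge lifts by \cref{MultiDouble}, and when $\bar S$ is moreover a minimal generating set of~$A_4$ the hypotheses of \cref{DoubleEdge} are met directly. This leaves the \emph{rigid} case, in which $s\mapsto\bar s$ is injective on~$S$ and creates no such coincidences.

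In the rigid case I would argue according to $\#S$, which minimality confines to $\{2,3\}$ (by an argument analogous to the one for $S_4$ in \cref{S4}). Since $\bar S$ must generate $A_4/V \iso \integer_3$, some generator, say $s_1$, has its $A_4$-component a $3$-cycle; then $[s_1,s_2]$ lies in $V\setminus\{e\}$ and is an involution, exactly as in the proof of \cref{Z3kx(Z2xZ2)}. When $\#S=2$ I would verify the hypotheses of \cref{Stud61}: the required intersections are trivial because $\langle s_1\rangle$ and its conjugates meet the $2$-group~$V$ trivially, and the balance condition forces $2|s_1|\cdot|[s_1,s_2]| = |G|$. When $\#S = 3$ I would instead delete a suitably chosen generator so that $\langle S-\{s_1\}\rangle$ is one of the already-settled groups (a copy of~$A_4$, or a small cyclic or dihedral group) of prime index, with $|s_1 s_2|$ prime, and invoke \cref{direct-prod-cor} or \cref{Stud71}.

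The main obstacle is the rigid $\#S = 2$ case, and specifically making the net voltage generate~$P$ uniformly in~$p$. The delicate point is that the balance condition of \cref{Stud61} forces $|s_1| = 3p$, i.e.\ the chosen $3$-cycle generator must have nonzero $z$-component and $p\neq 3$; when this fails (most notably when $p=3$, where no element has order~$3p$) the lemma does not apply, and one must instead transfer the voltage onto the other generator, or fall back on an explicit voltage computation on the handful of Cayley graphs of~$A_4$ (for example the truncated tetrahedron arising from a $3$-cycle together with an involution), exhibiting a concrete hamiltonian cycle whose signed sum of $z$-components is nonzero. Verifying that some legitimate choice always achieves this, for every~$p$, is where the real work lies; everything else is bookkeeping with \cref{FGL} over the quotient~$A_4$.
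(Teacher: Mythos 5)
Your overall strategy is the same as the paper's (reduce to double edges via \cref{DoubleEdge}/\cref{MultiDouble}, use \cref{NormalEasy} when a generator is central, and apply \cref{Stud61} to the generic two-generator case), but there is a genuine gap: you explicitly defer the exceptional cases in which the order condition $2|s_1|\cdot|[s_1,s_2]|=|G|$ of \cref{Stud61} cannot be met, and those cases are exactly where the content of the lemma lies. Concretely, after the reductions one is left with $p=2$ and $S=\{(1,2)(3,4)z,\,b\}$ with $b\in\{(1,2,3),(1,2,3)z\}$, and with $p=3$ and $S=\{(1,2,3)z,(1,2,4)\}$; in the first of these with $b=(1,2,3)$ no choice of $s_1$ satisfies the balance condition (the commutator subgroup $V\iso\integer_2\times\integer_2$ has exponent $2$, so $|[s_1,s_2]|=2$ forces $|s_1|=3p$, which fails), and in the $p=3$ case no element of $G$ has order $3p=9$ at all. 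Saying that one should ``fall back on an explicit voltage computation'' and that ``verifying that some legitimate choice always achieves this \ldots is where the real work lies'' is an accurate diagnosis, not a proof.

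Moreover, your fallback plan --- lift a hamiltonian cycle of $\Cay(A_4;\bar S)$ with nonzero net $z$-voltage via \cref{FGL} --- is not obviously available in the $p=3$ case: the paper does not obtain its hamiltonian cycle for $A_4\times\integer_3$ with $S=\{(1,2,3)z,(1,2,4)\}$ by lifting anything, but instead writes down a $36$-edge hamiltonian cycle in $G$ directly, which suggests that no hamiltonian cycle of the quotient has voltage generating $\integer_3$ (or at least that none was found). Similarly, for $p=2$ the paper exhibits the explicit cycles $\bigl((b^2,a)^2,(b^{-2},a)^2\bigr)^2$ and $(b^5,a,b^{-5},a)^2$. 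Your argument is therefore incomplete until these finitely many exceptional Cayley graphs are settled by concrete constructions; the $\#S=3$ case, by contrast, is harmless, since (as in the paper) minimality together with $S\cap\integer_p=\emptyset$ forces $p=3$ and a double edge, so \cref{DoubleEdge} finishes it without any appeal to \cref{Stud71}.
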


\begin{proof}
Suppose $S$ is a minimal generating set for the group $A_4 \times \integer_p $, and 
let $z$ be a generator of~$\integer_p$. 
Note that every minimal generating set of $A_4$ is  of the form
$\{(1,2,3),(1,2,4)\}$ or $\{(1,2,3),(1,2)(3,4)\}$ (up to automorphism), so $S$ has either $2$ or~$3$
elements. 

\setcounter{case}{0}

\begin{case}
Assume $\#S = 2$.
\end{case}
Write $S = \{a,b\}$.

\begin{subcase}
Assume $a = (1,2)(3,4)z$.
\end{subcase}
We may assume $p = 2$, for otherwise \cref{DoubleEdge} applies with $N = \integer_p$. 
Let $b$ be the second element of~$S$; we may assume $b$~is either $(1,2,3)$ or $(1,2,3)z$. 
	\begin{itemize}
	\item If $b = (1,2,3)$, then $ \bigl((b^2,a)^2,(b^{-2},a)^2\bigr)^2 $ is a hamiltonian cycle.
	\item  If $b = (1,2,3)z$, then $(b^5,a,b^{-5},a)^2$ is a hamiltonian cycle.
	\end{itemize}
\begin{subcase}
Assume $(1,2)(3,4)z \notin S$.
\end{subcase}
We may assume $(1,2,3)z \in S$.
Then we may assume $p = 3$, for otherwise it is not difficult to verify that \cref{Stud61} applies with $s_1 = (1,2,3)z$.

Let $a = (1,2,3)z$, and let $b$ be the other element of~$S$. Since $\{(1,2,3)z, (1,2)(3,4) \rangle \neq A_4 \times \integer_3$, we must have $b = (1,2,4) z^i$ for some~$i$. By applying an automorphism, we may assume $i = 0$. Then a hamiltonian cycle is given by
	$$ \bigl( a^{-2},b^{-2},a^2,b,(a^{-2},b^2)^2,a^2,b,a^{-2},b^{-2},a,b,a^2,b^{-1},a^{-2},b^2,a^{-2},b,a^{-1},b \bigr) .$$

\begin{case}
Assume $\#S=3$.
\end{case}
We may assume $S \cap \integer_p = \emptyset$, for otherwise \fullcref{NormalEasy}{Z}  applies.
Then, from the minimality of~$S$, it is not difficult to see that we must have $p = 3$ and (after applying an automorphism) $S$ contains both $(1,2,3)$ and $(1,2,3)z$, so \cref{DoubleEdge} applies (with $N = \integer_p$).
\end{proof}

Our proof \cite{M2Slovenian-A5} of the following result consists of two pages of unilluminating case-by-case analysis, so we omit it.

\begin{lem}[{}{\cite{M2Slovenian-A5}}] \label{A5}
Every connected Cayley graph on the alternating group $A_5$ has a hamiltonian cycle.
\end{lem}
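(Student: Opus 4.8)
The plan is to reduce to minimal generating sets and then dispose of each type with the non-normal Factor Group Lemma (\cref{FGL(notnormal)}). Since enlarging the connection set only adds edges, $\Cay(A_5;S)$ contains $\Cay(A_5;S')$ as a spanning subgraph whenever $S' \subseteq S$ generates, so I may assume $S$ is a minimal generating set. Because $A_5$ is $2$-generated and its longest subgroup chain $1 < \integer_2 < \integer_2 \times \integer_2 < A_4 < A_5$ has length~$4$, every minimal generating set has at most~$4$ elements; and since $\Aut(A_5) \iso S_5$ acts on these sets, there are only finitely many cases to consider up to isomorphism of the resulting Cayley graph. These I would enumerate by the cycle types of the generators.

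The crucial difference from the other lemmas in this section is that $A_5$ is simple, so there is no nontrivial proper normal subgroup to quotient by: neither the Factor Group Lemma (\cref{FGL}) nor \cref{VoltageCor} nor \cref{AlspachLifting} is available. Instead I work with a cyclic (non-normal) subgroup $H = \langle s \rangle$ and the Schreier coset multigraph $H \backslash \Cay(A_5;S)$, applying \cref{FGL(notnormal)}. The most efficient choice is to take $s$ of order~$5$ whenever $S$ contains such an element (so $|H| = 5$ and $|G:H| = 12$), and otherwise of order~$3$ (giving $|G:H| = 20$) or~$2$ (giving $|G:H| = 30$); in every case $|H|$ is prime, so any nontrivial element of~$H$ is a generator.

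For a generating pair $\{s_1,s_2\}$ I would first try \cref{Stud61}, checking its four order and intersection conditions directly from the cycle structure; for the common types $(|s_1|,|s_2|) \in \{(2,5),(3,5),(2,3),(3,3),(5,5)\}$ these are routine. When \cref{Stud61} does not apply, I would exhibit a Hamiltonian cycle in the $12$- (or $20$-, or $30$-)vertex coset multigraph that traverses a double edge, so that \cref{MultiDouble} produces the lift; such a double edge exists exactly when two elements of $S \cup S^{-1}$ send the base coset to the same coset, which is easy to arrange in these small Schreier graphs. For minimal generating sets of size~$3$ or~$4$, I would peel off one generator~$s_1$, invoke the inductive hypothesis to obtain a Hamiltonian cycle on the proper subgroup $\langle S - \{s_1\} \rangle$ (whose order is at most~$12$), and re-insert~$s_1$ via \cref{Stud71} or \cref{direct-prod-cor}.

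The hard part is uniformly guaranteeing the \emph{product} condition, namely that $s_1 s_2 \cdots s_n$ genuinely generates~$H$ rather than merely returning to~$H$. Because $A_5$ is simple there is no structural shortcut for this, so it must be verified cycle-by-cycle; arranging a double edge (so that \cref{MultiDouble} applies) is the cleanest way to avoid an explicit product computation, and the main labor is confirming that a double-edge Hamiltonian cycle exists in each Schreier coset graph. This is precisely the ``unilluminating case-by-case analysis'' referred to above, and a short computer search over the finitely many $\Aut(A_5)$-classes completes it.
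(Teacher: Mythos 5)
The paper does not actually print a proof of this lemma: it refers the reader to a separate two-page case-by-case analysis \cite{M2Slovenian-A5}, so the only meaningful question is whether your outline would carry that analysis through. It would not, for a concrete reason: your main tool for two-element generating sets, \cref{Stud61}, can \emph{never} apply to $A_5$. Its first hypothesis is $2\,|s_1|\cdot|[s_1,s_2]| = |G| = 60$, but every nontrivial element of $A_5$ has order $2$, $3$, or $5$, so $2\,|s_1|\cdot|[s_1,s_2]|$ lies in $\{8,12,18,20,30,50\}$ and never equals $60$ (one would need an element of order $15$, $10$, or $6$, and $A_5$ has none). Hence your claim that the hypotheses are ``routine'' to check for the pair types $(2,5)$, $(3,5)$, $(2,3)$, $(3,3)$, $(5,5)$ is vacuous, and every two-generator case must instead be settled by exhibiting an explicit hamiltonian cycle in a coset multigraph together with a verification of the product condition of \cref{FGL(notnormal)}, or a double edge lying on that cycle. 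That verification is the entire content of the lemma, and your proposal defers all of it to an unspecified search.

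The fallback via \cref{MultiDouble} is also not automatic: a double edge in $H \backslash \Cay(A_5;S)$ exists only when two distinct elements of $S \cup S^{-1}$ carry some coset to the same coset, which for a given minimal $S$ and chosen $H$ either happens or does not --- it cannot be ``arranged'' --- and even when one exists you must still produce a hamiltonian cycle of the multigraph that traverses it. Likewise \cref{Stud71} and \cref{direct-prod-cor} require $|s_1s_2| = |G|/|\langle S-\{s_1\}\rangle|$ to be the order of an element, which for $A_5$ forces the very special situation $\langle S-\{s_1\}\rangle \iso A_4$ and $|s_1s_2|=5$; they do not cover the remaining three-generator cases. So while your skeleton (reduce to minimal generating sets, classify up to $\Aut(A_5)$, apply \cref{FGL(notnormal)} over a cyclic subgroup of prime order) is the shape any proof of this lemma must take, the proposal as written completes no case, and its one claimed shortcut is structurally unavailable; the case-by-case work that the paper relegates to \cite{M2Slovenian-A5} is still entirely missing.
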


 \subsection{Some facts from group theory}
  

The following well-known consequence of Sylow's Theorems will be used several times.

\begin{lem}[{\cite[Thm.~25.1]{Gallian-AlgText}}] \label{NoMod1->Normal}
Suppose $G$ is a finite group, $p$~is a prime number, and $P$ is a Sylow $p$-subgroup of~$G$. If\/ $1$~is the only divisor~$k$ of $|G|/|P|$, such that $k \equiv 1 \pmod{p}$, then $P$~is a normal subgroup of~$G$.
\end{lem}

We recall a few basic facts about the Frattini subgroup.

\begin{defn}
The \emph{Frattini subgroup} of a finite group~$G$ is the intersection of all the maximal subgroups of~$G$. It is denoted $\Phi(G)$.
\end{defn}

\begin{prop}[{cf.~\cite[Thms.~5.1.1 and 5.1.3]{Gorenstein-FinGrps}}]
Let $S$ be a minimal generating set of a finite group~$G$. Then:
	\begin{enumerate}
	\item $\Phi(G)$ is a normal subgroup of~$G$.
	\item $S \cap \Phi(G) = \emptyset$.
	\item $S$ is a minimal generating set of $G/\Phi(G)$.
	\item If $G$ is a $p$-group, then\/ $\Phi(G) = [G,G] \cdot \langle g^p \mid g \in G \rangle$.
	\end{enumerate}
\end{prop}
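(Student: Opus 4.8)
The plan is to build everything on the classical characterization of $\Phi(G)$ as the set of \emph{non-generators} of~$G$, that is, the elements~$x$ such that $G = \langle Y \cup \{x\}\rangle$ forces $G = \langle Y\rangle$ for every $Y \subseteq G$. The single fact doing all the work is this: if $T \subseteq G$ and $G = \langle T \cup \Phi(G)\rangle$, then already $G = \langle T\rangle$. To see it, suppose $\langle T\rangle \neq G$; since $G$ is finite, $\langle T\rangle$ lies in some maximal subgroup~$M$, and by definition $\Phi(G) \subseteq M$ as well, so $\langle T \cup \Phi(G)\rangle \subseteq M \neq G$, a contradiction. Part~(1) is immediate and independent of this: any automorphism of~$G$ permutes the maximal subgroups, hence fixes their intersection $\Phi(G)$; thus $\Phi(G)$ is characteristic, and in particular normal.

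For~(2), suppose some $s \in S \cap \Phi(G)$. Then $G = \langle S\rangle = \langle (S - \{s\}) \cup \{s\}\rangle$ with $s \in \Phi(G)$, so the key fact (with $T = S - \{s\}$) gives $G = \langle S - \{s\}\rangle$, contradicting the minimality of~$S$. For~(3), write $\bar G = G/\Phi(G)$ and let $\bar S$ be the image of~$S$; clearly $\bar S$ generates~$\bar G$. First I would check the map $S \to \bar S$ is injective: if $s \neq t$ in~$S$ but $\bar s = \bar t$, then the image of $S - \{s\}$ already contains $\bar t = \bar s$ and hence generates~$\bar G$, so $G = \langle (S - \{s\}) \cup \Phi(G)\rangle$, and the key fact gives $G = \langle S - \{s\}\rangle$, contradicting minimality. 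The same argument rules out any proper subset of $\bar S$ generating~$\bar G$: such a subset pulls back to a proper $T \subsetneq S$ with $G = \langle T \cup \Phi(G)\rangle = \langle T\rangle$, again contradicting minimality. Hence $\bar S$ is a minimal generating set of~$\bar G$.

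For~(4), assume $G$ is a $p$-group and set $D = [G,G]\,\langle g^p \mid g \in G\rangle$. I would invoke the standard structural fact that every maximal subgroup~$M$ of a $p$-group is normal of index~$p$. Given this, $G/M \iso \integer_p$ is abelian, so $[G,G] \subseteq M$, and $(gM)^p = M$ forces $g^p \in M$ for all~$g$; hence $D \subseteq M$. As $M$ was arbitrary, $D \subseteq \Phi(G)$. Conversely, $G/D$ is abelian (since $[G,G] \subseteq D$) of exponent dividing~$p$ (since $g^p \in D$), so it is elementary abelian, i.e.\ a vector space over the field with $p$~elements. In such a group the maximal subgroups are exactly the hyperplanes, whose intersection is trivial, so $\Phi(G/D) = \{e\}$. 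Since every maximal subgroup of~$G$ contains~$D$, the correspondence theorem identifies $\Phi(G)/D$ with $\Phi(G/D) = \{e\}$, giving $\Phi(G) \subseteq D$ and hence equality.

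The routine parts are genuinely routine; the only place requiring an imported result is part~(4), where the proof rests on maximal subgroups of a $p$-group having index~$p$ (equivalently, that $p$-groups are nilpotent). That structural input is the main thing one must supply from outside the self-contained chain of reasoning; everything else reduces to the single non-generator observation together with the hyperplane description of subgroups of an elementary abelian group.
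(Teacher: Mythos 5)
Your proof is correct and complete; it is the standard textbook argument via the non-generator characterization of the Frattini subgroup (together with the Burnside basis theorem computation for part (4)). The paper itself gives no proof of this proposition, deferring entirely to the cited results in Gorenstein, and your argument is essentially the one found there.
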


And we also recall some very basic facts about Hall subgroups.

\begin{defn}
A subgroup~$H$ of a finite group~$G$ is a \emph{Hall subgroup} if $|H|$ is relatively prime to $|G|/|H|$.
\end{defn}

\begin{prop}[{\cite[Thm.~6.4.1(i)]{Gorenstein-FinGrps}}] \label{Hall-solv}
Let 
	\begin{itemize}
	\item $G$ be a finite group that is solvable,
	and
	\item $k$ be a divisor of\/~$|G|$, such that $k$ is relatively prime to\/ $|G|/k$.
	\end{itemize}
Then $G$ has a Hall subgroup whose order is precisely~$k$.
\end{prop}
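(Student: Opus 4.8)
The plan is to prove this classical theorem of P.~Hall by induction on $|G|$. Write $|G| = km$ with $m = |G|/k$, so $\gcd(k,m) = 1$; a subgroup of order~$k$ is precisely a Hall subgroup for the set of primes dividing~$k$. The case $|G| = 1$ is trivial, so I assume $G \neq \{e\}$ and that the result holds for all solvable groups of smaller order. Since $G$ is solvable, it has a minimal normal subgroup~$N$ that is an elementary abelian $p$-group for some prime~$p$; write $|N| = p^a$. Because $\gcd(k,m) = 1$, the prime~$p$ divides exactly one of $k$ and~$m$, and the entire $p$-part of $|G|$ divides that same factor, so $p^a$ divides whichever of $k$, $m$ is divisible by~$p$.

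First I would dispose of the case $p \mid k$. Then $p^a \mid k$, and the quotient $G/N$ is solvable of order $(k/p^a)\,m$ with $k/p^a$ still coprime to~$m$. By the inductive hypothesis, $G/N$ has a subgroup $\overline H$ of order $k/p^a$; its preimage $H$ in~$G$ has order $(k/p^a)\cdot p^a = k$, as required.

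Next, suppose $p \mid m$, so $p^a \mid m$. Now $G/N$ is solvable of order $k\,(m/p^a)$, with $k$ coprime to $m/p^a$, so by induction $G/N$ has a subgroup $\overline H = H/N$ of order~$k$; thus $|H| = k p^a$. If $H$ is a proper subgroup of~$G$ (equivalently $m > p^a$), then $H$ is solvable, $k$ divides $|H|$, and $k$ is coprime to $|H|/k = p^a$, so a second application of the inductive hypothesis to~$H$ produces a subgroup of order~$k$, which is the desired Hall subgroup of~$G$.

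This leaves the hard case, which is the main obstacle: $H = G$, i.e.\ $m = p^a$, so $N$ is a normal abelian subgroup whose order $m$ is coprime to $k = |G:N|$. Here the inductive pull-back yields nothing new, and the real content is to produce a \emph{complement} to~$N$. I would resolve this by the Schur--Zassenhaus theorem in the (more elementary) case of an abelian normal subgroup: since $N \normal G$ is abelian with $\gcd(|N|,|G/N|) = 1$, it has a complement~$K$, and then $|K| = |G|/|N| = k$. Alternatively one can argue more directly: if some maximal subgroup~$M$ of~$G$ fails to contain~$N$, then $G = MN$ and $M \cap N \normal G$ (using that $N$ is abelian), so minimality of~$N$ forces $M \cap N = \{e\}$ and hence $|M| = |G|/|N| = k$; the only remaining possibility, $N \subseteq \Phi(G)$, must then be treated separately using the nilpotency of the Frattini subgroup. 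Establishing the existence of the complement in this final case is where essentially all the difficulty of the theorem is concentrated.
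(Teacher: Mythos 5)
The paper offers no proof of this proposition: it is quoted verbatim from Gorenstein \cite[Thm.~6.4.1(i)]{Gorenstein-FinGrps}, so there is nothing internal to compare your argument against. What you have written is the standard inductive proof of P.~Hall's existence theorem --- pass to a minimal normal elementary abelian $p$-subgroup $N$, split according to whether $p$ divides $k$ or $|G|/k$, and in the terminal case (where $N$ is a normal abelian Hall subgroup of order $|G|/k$) produce a complement via the abelian case of the Schur--Zassenhaus theorem --- and this is correct; it is essentially the proof given in the cited source, where Schur--Zassenhaus is proved first as Thm.~6.2.1. One caution about your ``alternative, more direct'' ending: the subcase $N \subseteq \Phi(G)$ is not actually disposed of by the nilpotency of the Frattini subgroup, and the cleanest way to see that a nontrivial normal abelian Hall subgroup cannot lie in $\Phi(G)$ is to first exhibit a complement --- that is, to use Schur--Zassenhaus anyway --- so your primary route through Schur--Zassenhaus is the one to keep.
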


\section{Groups of order $8p$}\label{8p-section}

We begin with a special case:

\begin{lem} \label{8pNotNormal}
   If\/ $|G| = 8p$, where $p$ is prime, and the Sylow $p$-subgroups are not normal, then every connected Cayley graph on~$G$ has a hamiltonian cycle.
\end{lem}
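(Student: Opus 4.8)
The plan is to begin by pinning down $G$ exactly. By Sylow's theorems the number $n_p$ of Sylow $p$-subgroups satisfies $n_p \mid 8$ and $n_p \equiv 1 \pmod p$, and the hypothesis forces $n_p > 1$. The only divisors of $8$ that exceed $1$ and can be $\equiv 1$ modulo a prime are $4$ (giving $p = 3$, $n_3 = 4$) and $8$ (giving $p = 7$, $n_7 = 8$), so either $|G| = 24$ or $|G| = 56$. For $|G| = 56$ the eight Sylow $7$-subgroups account for $48$ elements of order $7$, so the remaining $8$ elements form a unique — hence normal — Sylow $2$-subgroup $P$; since $7 \nmid |\Aut(P)|$ unless $P \iso \integer_2^3$ (only $GL(3,2) \iso \Aut(\integer_2^3)$ has order divisible by $7$), this forces $G \iso \integer_2^3 \rtimes \integer_7$, the Frobenius group of order $56$. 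For $|G| = 24$ the conjugation action on the four Sylow $3$-subgroups gives a map $G \to S_4$ whose kernel has order $1$ or $2$ (a larger kernel would force a normal Sylow $3$-subgroup), and one checks this leaves exactly $G \in \{S_4,\ A_4 \times \integer_2,\ \mathrm{SL}(2,3)\}$.

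The first two order-$24$ groups are immediate: \cref{S4} handles $S_4$, and \cref{A4xZp} (with $p = 2$) handles $A_4 \times \integer_2$. For $\mathrm{SL}(2,3)$ I would write $Z = \langle z \rangle$ for its central involution, so that $G/Z \iso A_4$, whose Cayley graphs are hamiltonian by \cref{Z3kx(Z2xZ2)}. Every minimal generating set $S$ has exactly two elements (a pair generating both the image $A_4$ and the kernel $Z$ already suffices, and $\langle s, z\rangle$ is always abelian, so no redundant third generator can appear). If two elements of $S \cup S^{-1}$ are congruent modulo $Z$ — in particular whenever $S$ contains an element $a$ of order $4$, since then $a^{-1} = az \equiv a \pmod Z$ — then \cref{DoubleEdge} applies. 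Otherwise $S$ consists of two elements of order $3$ or $6$, and I would apply \cref{FGL} with $N = Z$, lifting a hamiltonian cycle of $\Cay(A_4; S)$ whose edge-labels multiply to $z$ rather than $e$; the only delicate point here is verifying that the product lands on the nontrivial element of $Z$, which is a finite check on a $12$-vertex graph.

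For $G \iso \integer_2^3 \rtimes \integer_7$ every minimal generating set is again a $2$-element set, of one of two types: $\{x,y\}$ with $x,y$ of order $7$ generating distinct Sylow subgroups, or $\{x,v\}$ with $|x| = 7$ and $v$ an involution. For $\{x,y\}$ the quotient multigraph $\langle x\rangle \backslash \Cay(G;S)$ is a connected $4$-valent graph on the $8$ cosets of $\langle x\rangle$ that carries the double edge arising from $x$ and $x^{-1}$, so \cref{MultiDouble} finishes this case. The case $\{x,v\}$ is the crux and is where I expect all the difficulty: here $\Cay(G;\{x,v\})$ is cubic, the coset $\langle x\rangle$ has only one non-loop edge in the $\langle x\rangle$-quotient, and the relevant commutators all lie in $\integer_2^3$ (hence have order at most $2$), so \cref{FGL(notnormal)}, \cref{MultiDouble}, \cref{Stud61}, \cref{Stud71} and \cref{direct-prod-cor} all fail to apply.

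The main obstacle, then, is to construct an explicit hamiltonian cycle in $\Cay(G;\{x,v\})$. Writing each element as $w x^i$ with $w \in \integer_2^3$ and $i \in \integer_7$, the $x$-edges split $G$ into eight disjoint $7$-cycles (``rows,'' indexed by $w$), while $v$ sends $w x^i$ to $(w + \sigma^i v)\,x^i$, where $\sigma$ is the order-$7$ Singer automorphism; thus within layer $i$ the involution joins rows $w$ and $w + \sigma^i v$, and since $\sigma$ permutes the seven nonzero vectors cyclically, every pair of rows is joined by exactly one such ``rung,'' whose layer equals the discrete logarithm $\log_\sigma(w + w')$. A hamiltonian cycle is obtained by entering each row, traversing its $7$-cycle completely, and leaving by a rung, which is possible precisely when the entry and exit rungs of a row lie in adjacent layers. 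This reduces the whole problem to finding a hamiltonian cycle $w^{(0)}, \ldots, w^{(7)}$ of the eight rows whose consecutive rung-layers $\log_\sigma(w^{(k-1)}+w^{(k)})$ and $\log_\sigma(w^{(k)}+w^{(k+1)})$ differ by $\pm 1 \pmod 7$. Such a cycle exists — for instance $0,\alpha^2,\alpha^5,1,\alpha,\alpha^4,\alpha^6,\alpha^3$ in $\integer_2^3 \iso \mathbb{F}_8 = \mathbb{F}_2[\alpha]$, with rung-layers $2,3,4,3,2,3,4,3$ — and lifting it as above yields the required hamiltonian cycle. Producing and checking this cycle is the one genuinely new step; everything else reduces to the lemmas already established.
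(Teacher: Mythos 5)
Your reduction to the orders $24$ and~$56$ and the identification of the possible groups agree with the paper, and your treatment of $S_4$, $A_4 \times \integer_2$, and $\mathrm{SL}(2,3)$ follows essentially the paper's route (for $\mathrm{SL}(2,3)$ the paper closes your deferred ``finite check'' cleanly: the lifted cycle $(s_1^2,s_2^{-1},s_1^{-2},s_2)^2$ has endpoint $[s_1,s_2]^2$, which is nontrivial because every element of $Q_8 - \{\pm 1\}$ has nontrivial square). Your rows-and-rungs construction for $\Cay \bigl( \integer_2^3 \rtimes \integer_7; \{x,v\} \bigr)$ is correct --- I verified the rung-layers $2,3,4,3,2,3,4,3$ --- and is a pleasantly systematic version of the paper's explicit cycle $\bigl( (x^6,y)^2,(x^{-6},y)^2 \bigr)^2$.

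The genuine gap is in the order-$56$ case you dismiss in one sentence, namely $S = \{x,y'\}$ with both generators of order~$7$. There is no double edge of $\langle x \rangle \backslash \Cay(G;S)$ ``arising from $x$ and $x^{-1}$'': since $n_7 = 8$ forces $N_G(\langle x \rangle) = \langle x \rangle$, the condition $\langle x \rangle g x = \langle x \rangle g x^{-1}$ (equivalently $g x^2 g^{-1} \in \langle x \rangle$) holds only for $g \in \langle x \rangle$, where the two edges are loops at the vertex $\langle x \rangle$ itself and hence useless; at every other coset the $x$- and $x^{-1}$-edges lead to distinct vertices. So \cref{MultiDouble} does not apply as you invoke it; worse, the vertex $\langle x \rangle$ has only the two single edges to $\langle x \rangle y'$ and $\langle x \rangle y'^{-1}$ available, so any argument through this quotient requires real work that is not present. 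The paper instead normalizes the generating set to $\{x, x^i y\}$ with $i \in \{1,2\}$ (an automorphism carries $\{x, x^4y\}$ to $\{x, x^2y\}$ because $x \equiv (x^4y)^2 \pmod{\integer_2^3}$) and exhibits explicit hamiltonian cycles, e.g.\ $(x^6, xy, xy)^7$ for $i = 1$. Note also that your rows-and-rungs machinery does not transfer directly to this case, since both generators now shift the layer index. This case must be repaired before your proof is complete --- ironically, it is the case you flagged as easy, while the one you called the crux is the one you got right.
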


\begin{proof}
Let $P$ be a Sylow $p$-subgroup of~$G$. 
Sylow's Theorem \pref{NoMod1->Normal} implies $p$ is either $3$ or~$7$.

\setcounter{case}{0}

\begin{case}
Assume $p = 3$.
\end{case}
The normalizer $N_G(P)$ is not all of~$G$, so $| G : N_G(P)
| = 4$.  Letting $G$ act on the cosets of $N_G(P)$ by translation yields a homomorphism from~$G$ to~$S_4$.  Then either $G = S_4$ (so \cref{S4} applies), or $N_G(P)$ contains a normal subgroup~$N$ of~$G$,
which must be of order~$2$, and thus $N \subseteq Z(G)$.

Since $|G/N| = 12$, and the Sylow $3$-subgroup is not normal, we have $G/N \iso A_4$. We may assume $G \not\iso A_4 \times \integer_2$ (otherwise \cref{A4xZp} applies),  so it is easy to see that $G \iso \integer_3 \ltimes Q_8$. Since $G/N \iso A_4 \iso \integer_{3} \ltimes (\integer_2 \times \integer_2)$, the proof of \cref{Z3kx(Z2xZ2)} tells us that 
	$ ( s_1^{2}, s_2^{-1}, s_1^{-2}, s_2)^{2}$
is a hamiltonian cycle in $\Cay(G/N;S)$. Its endpoint in~$G$ is $[s_1,s_2]^2$. This generates~$N$ (because the square of any element of $Q_8 - \{\pm1\}$ is nontrivial), so the Factor Group Lemma \pref{FGL} provides a hamiltonian cycle in $\Cay(G;S)$.

\begin{case}
Assume $p = 7$.
\end{case}
It is not difficult to see that we must have $G= \integer_7 \ltimes (\integer_2)^3$. Let $x$ and~$y$ be nontrivial elements of $\integer_7$ and $(\integer_2)^3$, respectively. Then, up to automorphism (and replacing generators by their inverses), it is clear that every minimal generating is of the form $\{x, x^iy\}$ with $i \in \{0, 1, 2 , 4\}$. Furthermore, since $x \equiv (x^4y)^2 \pmod{(\integer_2)^3}$, an automorphism carries the generating set $\{x, x^4 y\}$ to $\{x, x^2 y\}$; so we may assume $i \neq 4$. Here are hamiltonian cycles for the three remaining cases.
	\begin{itemize}
	\item[] $i = 0$:  $\bigl( (x^6,y)^2,(x^{-6},y)^2 \bigr)^2$,
	\item[] $i = 1$: $(x^6,xy, xy)^7$,
	\item[] $i = 2$: $\Bigl(( x^6,x^2 y)^2,\bigl( x^{-6}, (x^2 y)^{-1})^2 \Bigr)^2$.
	\end{itemize}
\end{proof}

\begin{prop}[Jungreis-Friedman {\cite[Thm.~8.5]{JungreisFriedman}}] \label{8p}
   If\/ $|G| = 8p$, where $p$ is prime, then every connected Cayley graph on~$G$ has
   a hamiltonian cycle.
\end{prop}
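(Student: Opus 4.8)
The plan is to prove \cref{8p} by reducing to the case where a Sylow $p$-subgroup~$P$ is normal, since \cref{8pNotNormal} already disposes of the non-normal case. So assume $P \normal G$. If $p = 2$ then $|G| = 16$ is a prime power and \cref{pk} applies, so I may assume $p$ is odd, whence $P = \integer_p$ is a cyclic normal subgroup. The natural strategy is to pass to the quotient $G/P$, which has order~$8$, find a hamiltonian cycle there (guaranteed by induction on~$|G|$, or directly since all groups of order~$8$ are well understood), and lift it using one of the Factor Group tools. The cleanest route is to try \cref{NormalEasy}: since $\langle P \rangle = P \normal G$ and $|P| = p$ is prime, part~\pref{NormalEasy-p} applies provided there is a generator $s \in S$ with $\langle s \rangle = P$ and $\Cay(G/P;S)$ has a hamiltonian cycle. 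The difficulty is that $S$ need not contain a generator of~$P$.

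When no element of~$S$ lies in~$P$, I would instead work directly with the lifting lemmas. First I would note that $G/P$ is one of the five groups of order~$8$ (cyclic, $\integer_4 \times \integer_2$, $\integer_2^3$, $D_8$, $Q_8$), and in each case record an explicit hamiltonian cycle $(s_1 P, \ldots, s_8 P)$ in $\Cay(G/P;S)$. By the Factor Group Lemma \pref{FGL}, a lift exists as soon as the product $s_1 s_2 \cdots s_8$ generates~$P$, i.e.\ is nontrivial. When it is trivial, I would exploit the flexibility in the choice of hamiltonian cycle in the order-$8$ quotient: if the quotient graph contains a double edge, \cref{MultiDouble} finishes immediately, and if two generators coincide modulo~$P$, \cref{DoubleEdge} applies. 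Failing that, I would use the structural theorem \cref{AlspachLifting}, but note that $|G : P| = 8$ is even, so that tool is unavailable; instead I would appeal to \cref{VoltageCor} when $\Cay(G/P;S)$ happens to be a M\"obius ladder or prism, which covers the two-generator cases on $\integer_8$, $D_8$, and $Q_8$.

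The main obstacle I expect is the small number of residual cases where the product of the lifted generators around \emph{every} available hamiltonian cycle in the order-$8$ quotient is trivial, the quotient graph has no double edge, and no two generators are congruent mod~$P$. These are precisely the situations where the quotient is a single "rigid" cycle, and here I would fall back on the dihedral- and quaternion-type machinery: if $G$ has an abelian subgroup of index~$2$ inverted by an involution it is of dihedral type with $|A| = 4p$ a product of three primes, so \cref{DihType} applies; and if it is of quaternion type, \cref{QnHamCyc} applies. The genuinely hard part is verifying that these structural cases, together with the explicit small-group analysis, exhaust all groups of order~$8p$ with normal Sylow $p$-subgroup, which requires a careful classification of the possible actions of the order-$8$ quotient on~$\integer_p$ and the construction of an explicit hamiltonian cycle (as in the $p = 7$ analysis of \cref{8pNotNormal}) in each remaining configuration.
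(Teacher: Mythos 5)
You have the right opening moves (dispose of $p=2$ via \cref{pk} and of non-normal Sylow $p$-subgroups via \cref{8pNotNormal}, then try to lift hamiltonian cycles from the order-$8$ quotient), and this matches the paper's overall strategy, but there is a genuine gap exactly where you defer the work. A first, smaller issue: you never invoke \cref{KeatingWitte}. The paper uses it to discard every $G$ whose commutator subgroup is cyclic of prime order; this forces the Sylow $2$-subgroup to be nonabelian and to act nontrivially on $\integer_p$, so only $D_8$ and $Q_8$ survive as quotients. Without that step you are left juggling all five groups of order $8$, and your remaining tools do not obviously dispatch, say, $\integer_8$ or $\integer_2^3$ acting nontrivially on $\integer_p$ (Keating--Witte does, since there $[G,G]=\integer_p$).

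The serious problem is your fallback for the ``rigid'' residual cases. The one genuinely hard group is
$G \iso \langle f,x,z \mid f^2=x^4=z^p=1,\ x^f=x^{-1},\ z^f=z,\ z^x=z^{-1}\rangle$,
arising when $P_2 \iso D_8$ acts on $\integer_p$ with non-cyclic kernel. This group is \emph{neither} of dihedral type \emph{nor} of quaternion type: its only abelian subgroup of index~$2$ is $\langle f,x^2,z\rangle \iso \integer_2\times\integer_2\times\integer_p$, and every element of the other coset conjugates $f$ to $fx^2 \neq f^{-1}$, so nothing inverts that subgroup. Hence \cref{DihType} and \cref{QnHamCyc} do not apply. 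Moreover, the generating sets that survive the easy reductions here (the paper shows they can be taken to be $\{f,fx,x^2z\}$, $\{f,fx,fxz\}$, or $\{f,fx,fx^3z\}$) have three elements, so they fall outside the ``two-generator cases'' to which you confine \cref{VoltageCor}; and even if you checked that the corresponding cubic quotient graphs on $D_8$ are a M\"obius ladder and a prism, the prism arising from $\{f,fx,fx^3z\}$ has $2k=4\equiv 1 \pmod{3}$, so \cref{VoltageCor} is unavailable when $p=3$ --- which is exactly why the paper must exhibit a bespoke hamiltonian cycle there (the natural lift closes up prematurely because its voltage is $z^3=e$). The paper's treatment of this single group --- proving $x^2\in\Phi(G)$ to normalize $S$, whittling $S$ down to the three sets above, and then producing explicit cycles via \cref{FGL(notnormal)}, \cref{DoubleEdge}, and \cref{FGL} --- is the bulk of the proof, and it is precisely the part your proposal labels ``the genuinely hard part'' and leaves undone.
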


\begin{proof}
Let $S$ be a minimal generating set of~$G$. We may assume $p > 2$, for otherwise $|G| = 16$ is a prime power, so \cref{{pk}} applies. We may also assume $G$ has a normal Sylow $p$-subgroup, for otherwise \cref{8pNotNormal} applies. Hence, $G = P_2 \ltimes \integer_p$ where $P_2$ is a Sylow $2$-subgroup of $G$. 
  
 We assume the commutator subgroup of~$G$ is not cyclic of prime order (otherwise \cref{KeatingWitte} applies). Hence, $P_2$ is nonabelian, and acts nontrivially on $\integer_p$. The only nonabelian groups of order~$8$ are $D_8$ and~$Q_8$.

\setcounter{case}{0}

   \begin{case} 
   Assume ${P_2 \cong Q_8}$.
   \end{case}
   Since $\Aut \integer_p$ is cyclic, and the only nontrivial cyclic quotient of~$Q_8$ is~$\integer_2$, it must be the case that the kernel of the action of $Q_8$ on $\integer_p$ is a subgroup of order~$4$. The subgroups of order~$4$ in~$Q_8$ are cyclic, so it is easy to see that $G$ is of quaternion type. Hence, \cref{QnHamCyc} applies.

   \begin{case} 
   Assume ${P_2 \cong D_{8}}$.
   \end{case}
   The argument of the preceding case shows that the kernel of the action of $D_8$ on $\integer_p$ is of order~$4$. If it is cyclic,
   then $G \cong D_{8p}$, which is covered by \cref{dihedral}. 
   
   Henceforth, we assume the kernel is not cyclic, so
   $$G \iso \{f, x, z\, |\, f^2 = x^4 = z^p = 1, x^f = x^{-1}, z^f = z, z^x = z^{-1}\} .$$
Note that, by \cref{NormalEasy}, we may assume $S$ does not contain any element of $\langle z \rangle$.

Note that:
	\begin{itemize}
	\item $x^2$ is an element of order~$2$ in $Z(G)$, so $x^2$ belongs to every Sylow $2$-subgroup, 
	and
	\item $x^2$ is in the Frattini subgroup of $D_8 \iso G/\langle z \rangle$, so $x^2$ also belongs to every maximal subgroup that contains $\langle z \rangle$. 
	\end{itemize}
Since every maximal subgroup of~$G$ either  is a Sylow $2$-subgroup or contains $\langle z \rangle$, we conclude that $x^2 \in \Phi(G)$.  Since $S$ is minimal, this tells us that $S$ is a minimal generating set of $G/\langle x^2 \rangle$.
Therefore, we may assume $S$ does not contain any element~$s$ such that $s^2 = x^2$. 
(Otherwise, \cref{DoubleEdge} applies with $t = s^{-1}$.) This means that $S$ does not contain any element of the form $x^{\pm 1} z^n$.

Then, since $S$ must generate $G/\langle x^2, z \rangle$, we conclude that $S$ (or $S^{-1}$) contains elements of the form $fx z^\ell$ and $f x^{2m} z^n$. We may assume:
	\begin{itemize}
	\item $\ell = 0$ (conjugating by a power of~$z$),
	\item $m = 0$ (because we can conjugate by a power of~$x$ and replace $x$~with $x^{-1}$ if necessary),
	and
	\item $n = 0$ (otherwise, we may apply \cref{DoubleEdge} with $N = \langle z \rangle$ and $t^{-1} = s = f z^n$).
	\end{itemize}
Thus, $S$ contains both $f$ and $fx$.

Now, in order to generate $z$, the set $S$ must contain either $x^2z$ or $fx^iz$, for some~$i$ (up to replacing $z$ with one of its powers).

   \begin{subcase} 
   Assume ${x^2z}$ in ${S}$.
   \end{subcase}
   We must have $S = \{f, fx, x^2z\}$ and \cref{FGL(notnormal)} shows that $(fx,f,fx,x^2z)^{2p}$ is a hamiltonian cycle, since $fx\cdot f\cdot fx\cdot x^2z = fz$ is of order $2p$, and the vertices
   	$$ \text{$e$, $fx$, $fx \cdot f = x^3$, and $x^3  \cdot fx = f x^2$}$$
are all in different right cosets of $\langle f, z \rangle$.

   \begin{subcase} 
   Assume there is no element of the form ${x^2z^i}$ in ${S}$.
   \end{subcase}
   Then $fx^iz \in S$. Note that $i$ must be odd, for otherwise $\langle z \rangle \subset \langle fx^i z \rangle$, so $\langle fx, fx^i z \rangle = G$, contradicting the minimality of~$S$. 
 Thus, we have 
 	$$ \text{$S = \{f, fx, fxz\}$ or $S = \{f, fx, fx^3z\}$.} $$
In the former case,
   \cref{DoubleEdge} applies (letting $s = fx$, $t = fxz$, and $N = \langle z \rangle$). 
   
We may now assume $S = \{f, fx, fx^3z\}$. Because we have $G/\integer_p \iso D_8$, it is easy to check that
   	$$ (fx, fx^3 z, fx, f, fx^3z, fx, fx^3z, f )$$
is a hamiltonian cycle in $\Cay(G/\integer_p; S)$, and we have
	$$ (fx)( fx^3 z)( fx)( f)( fx^3z)( fx)( fx^3z)( f ) = z^3 .$$
If $p \ge 5$, this product generates $\integer_p$, so the Factor Group Lemma \pref{FGL} tells us there is a hamiltonian cycle in $\Cay(G;S)$.

If $p = 3$, a hamiltonian cycle is given by
	\begin{align*}
	 \bigl( (f,fx)^2,fx^3 z,f,fx,f,fx^3 z,(fx,f)^2,fx^3 z,f,fx^3 z,fx,f,fx^3 z,f,fx,fx^3 z,f,fx^3 z \bigr) 
	 . \end{align*}
\end{proof}


\section{Groups of order $3p^2$} \label{3p2Sect}

\begin{prop} \label{3p2}
 If\/ $|G| = 3p^2$, where $p$ is prime, then every connected Cayley graph on~$G$ has
a hamiltonian cycle.
 \end{prop}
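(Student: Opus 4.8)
The plan is to prove that every connected Cayley graph on a group~$G$ of order~$3p^2$ has a hamiltonian cycle by reducing to cases already handled in the excerpt. I would first dispose of small and degenerate primes: if $p=2$ then $|G|=12=4p$, covered by \cref{4p}, and if $p=3$ then $|G|=27=p^k$ is a prime power, covered by \cref{pk}. So I may assume $p \ge 5$. Next I would invoke Sylow theory. By \cref{NoMod1->Normal}, the Sylow $p$-subgroup~$P$ (of order~$p^2$) is normal unless the number of Sylow $p$-subgroups is a nontrivial divisor of~$3$ that is $\equiv 1 \pmod p$; since $p\ge 5$, the divisor~$3$ cannot be $\equiv 1 \pmod p$, so $P$ is automatically normal. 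Thus $G = \integer_3 \ltimes P$ with $P$ abelian of order~$p^2$, i.e.\ $P \iso \integer_{p^2}$ or $P \iso \integer_p \times \integer_p$.

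With $P$ normal and abelian, the quotient $G/P \iso \integer_3$ is abelian, so the commutator subgroup satisfies $[G,G] \subseteq P$. The main dichotomy is whether $[G,G]$ is cyclic. If $[G,G]$ is cyclic (which automatically happens when $P \iso \integer_{p^2}$, and also when the $\integer_3$-action on $\integer_p \times \integer_p$ has a fixed line, forcing $[G,G]\iso\integer_p$), then $[G,G]$ is a cyclic $p$-group and \cref{KeatingWitte} applies directly. If $[G,G]$ is trivial then $G$ is abelian and \cref{abel} applies. So the only genuinely remaining case is $P \iso \integer_p \times \integer_p$ with $\integer_3$ acting so that $[G,G] = P$ is noncyclic.

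In that remaining case the generator of~$\integer_3$ must act on $\integer_p \times \integer_p$ as an order-$3$ element of $\mathrm{GL}_2(\integer_p)$ with no eigenline over~$\integer_p$ (an irreducible action). Here I would take a minimal generating set~$S$ and analyze its structure. Since $G/[G,G]\iso\integer_3$, any generating set must contain an element~$s$ mapping to a generator of~$\integer_3$; writing $s = a y$ with $a$ of order~$3$ and $y\in P$, I would aim to produce a second generator and apply one of the lifting tools. The cleanest route is to find a cyclic normal subgroup or a suitable sequence for the Factor Group Lemma: because the action is irreducible, for a single order-$3$ element~$s_1$ the commutators $[s_1,s_2]$ generate all of~$P$, which should let me verify the hypotheses of \cref{Stud61} (with $|s_1|=3$, $|[s_1,s_2]|$ accounting for the $p^2$ factor) much as in \cref{Z3kx(Z2xZ2)}, or else lift a hamiltonian cycle from $\Cay(G/Q;S)$ for a cyclic normal $Q\iso\integer_p$ via \cref{FGL} or \cref{NormalEasy}.

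The main obstacle I anticipate is exactly this irreducible case $\integer_3 \ltimes (\integer_p\times\integer_p)$: unlike the reducible situations it resists the cyclic-commutator shortcut, so the whole weight of the proof rests on producing an explicit hamiltonian cycle there. The delicate point is checking the coset-separation and order conditions of \cref{Stud61} (or \cref{FGL(notnormal)}) uniformly in~$p$, since the no-eigenline hypothesis must be translated into the statements $\langle[s_1,s_2]\rangle\cap\langle s_1\rangle=\{e\}$ and $\langle[s_1,s_2]\rangle\cap(s_2^{-1}\langle s_1\rangle s_2)=\{e\}$, together with $s_2\notin\langle s_1\rangle\langle[s_1,s_2]\rangle$. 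I expect these to hold because $|s_1|=3$ forces $\langle s_1\rangle$ to meet~$P$ trivially, and irreducibility forces $\langle[s_1,s_2]\rangle$ to be a proper $\integer_3$-invariant object only if it is all of~$P$; reconciling ``$|[s_1,s_2]|$ divides $p$'' with ``$2|s_1|\,|[s_1,s_2]|=|G|=3p^2$'' will require care and is the step I would scrutinize most closely.
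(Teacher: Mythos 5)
Your reduction to the case $G = \integer_3 \ltimes (\integer_p \times \integer_p)$ with $p \ge 5$ and $[G,G] = \integer_p \times \integer_p$ matches the paper exactly, but the argument you sketch for that remaining case has a genuine gap: none of the tools you propose can close it. First, \cref{Stud61} requires $2|s_1|\cdot|[s_1,s_2]| = |G| = 3p^2$. Since $[s_1,s_2]$ lies in the elementary abelian group $P = \integer_p\times\integer_p$, its order divides~$p$, and since $1$ is not an eigenvalue of the $\integer_3$-action there are no elements of order $3p$, so $|s_1| \in \{3, p\}$ and the product $2|s_1|\cdot|[s_1,s_2]|$ is at most $6p$ or $2p^2$, never $3p^2$. (You flag this tension yourself but leave it unresolved; it is fatal, not a matter of care.) Second, the lifting route via a cyclic normal $Q \iso \integer_p$ fails whenever $x^2+x+1$ is irreducible over $\integer_p$ (i.e.\ $p \equiv 2 \pmod{3}$), since then $P$ has no $\integer_3$-invariant line and hence no normal subgroup of order~$p$; for the same reason \cref{NormalEasy} is unavailable in the configuration $S=\{s,t\}$ with $|s|=3$, $|t|=p$, because $\langle t\rangle \notnormal G$. (A side remark: $[G,G]=P$ forces only that $1$ is not an eigenvalue, not that the action is irreducible --- when $p\equiv 1\pmod{3}$ the action can be diagonalizable --- so your ``no eigenline'' claim is also not quite right, though that is not the main problem.)

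What the paper actually does in this hard case ($|S|=2$, $|s|=3$, $|t|=p$) is much heavier: it computes the action explicitly (the minimal polynomial of the conjugation action is $x^2+x+1$), forms the quotient multigraph $\langle s\rangle\backslash\Cay(G;S)$ on $p^2$ vertices, shows it has exactly two double edges, and constructs by hand a hamiltonian cycle in this multigraph that traverses a double edge, so that \cref{MultiDouble} applies. There is no shortcut through \cref{Stud61} or the Factor Group Lemma here, and your proposal supplies no substitute for this explicit construction. You also do not address the easier configurations $|S|=3$, and $|S|=2$ with both generators of order~$3$, which the paper dispatches with \cref{AlspachLifting}, \cref{DoubleEdge}, and \cref{pkSubgrp}, but those are minor omissions by comparison.
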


\begin{proof}
Let $S$ be a minimal generating set of~$G$, and let $P$ be a Sylow $p$-subgroup of~$G$. We may assume $p \ge 5$, for
otherwise either $|G| = 12$ is of the form~$4p$, so \cref{4p} applies, or $|G| = 3^3$ is a prime power, so \cref{pk} applies. Hence, Sylow's Theorem \pref{NoMod1->Normal} tells us $P \normal G$. Note that $G/P \iso \integer_{3}$ is
abelian,
so $[G,G] \subset P$. So we may assume $P \iso \integer_p \times
\integer_p$,
for otherwise \cref{KeatingWitte} applies. Thus, we may assume $G =
\integer_{3} \ltimes (\integer_p \times \integer_p)$, and $[G,G] = \integer_p \times \integer_p$.

\setcounter{case}{0}

\begin{case}
 Assume ${|S| = 3}$.
 \end{case}
 Write $S = \{s,t,u\}$. We may assume $s \notin \integer_p \times \integer_p$, so $|s| = 3$. Because $S$ is minimal,
we see that $\langle s,t \rangle$ and $\langle s,u \rangle$ each
have order~$3p$. Let $N$ be the unique subgroup of order~$p$ in $\langle
s,t \rangle$, and note that $N$ is normal in~$G$ (because it is normalized both by~$s$ and by the abelian group $\integer_p \times \integer_p$).

	\begin{itemize}
	\item If $t \in N$, then \cref{AlspachLifting} applies.
	\item If $t \notin N$, then we may assume $s \equiv t \pmod{N}$ (by replacing
$t$ with~$t^{-1}$, if necessary). So \cref{DoubleEdge} applies.
	\end{itemize}

\begin{case}
 Assume ${|S| = 2}$.
 \end{case}
 Write $S = \{s,t\}$. 
 
 If $s$ and~$t$ both have order~$3$, then we may
assume $s \equiv t \pmod{\integer_p \times \integer_p}$ (by replacing
$t$
with~$t^{-1}$, if necessary). Then $s t^{-1}$ is contained in the normal
$p$-subgroup $\integer_p \times \integer_p$, so \cref{pkSubgrp}
applies.

 We may now assume $|s| = 3$ and $|t| = p$.

Let us determine the action of~$s$ on $\integer_p \times
\integer_p$.
 \begin{itemize}
 \item Define a linear transformation~$T$ on $\integer_p \times
\integer_p$ by $T(v) = s^{-1} v s$,
 \item let $m(x)$ be the minimal polynomial of~$T$,
 and
 \item let $u = T(t) = s^{-1} t s$.
 \end{itemize}
 Note that:
 \begin{itemize}
 \item Because $|s| = 3$, we know $T^3 = I$, so $m(x)$ divides $x^3 - 1
=
(x-1)(x^2 + x + 1)$.
 \item Since $|[G,G]| = p^2$, we know that $1$~is not an eigenvalue of~$T$.
 \item Because $\langle s,t \rangle = G$, we know $u = T(t) \notin
\langle
t \rangle$, so the minimal polynomial of~$T$ has degree~$2$ (and
$\{t,u\}$ is a basis of $\integer_p \times \integer_p$).
 \end{itemize}
 We conclude that the minimal polynomial of~$T$ is $x^2 + x + 1$. Thus,
with respect to the basis $\{t, u\}$ of $\integer_p \times
\integer_p$, we have $T(i,j) = (-j, i-j)$. In other words,
 $ s^{-1} (t^i u^j) s
 = t^{-j} u^{i-j} $,
 so
 \begin{equation} \label{3p2-multiedges}
 t^i u^j s
 = s \bigl( s^{-1} (t^i u^j) s \bigr)
 = s (t^{-j} u^{i-j})
 \in \langle s \rangle t^{-j} u^{i-j} .
 \end{equation}

The quotient multigraph $\langle s \rangle \backslash \Cay(G;S)$ has the following
properties:
 \begin{itemize}
 \item it has $p^2$ vertices;
 \item it is regular of valency~$4$;
 \item the vertices are the ordered pairs $(i,j)$, where 
 	$$-\frac{p-1}{2} \le i,j\le \frac{p-1}{2} ;$$
	and
 \item the vertex $(i,j)$ is adjacent to: 
 	$$ \text{$(i+1,j)$; $(i-1,j)$;
$(-j,i-j)$; and $(j-i,-i)$,} $$
where calculations are performed modulo $p$
\see{3p2-multiedges}.
 \end{itemize}
Furthermore, we observe that for any prime $p$, there are precisely two multiple edges:
	\begin{itemize}
	\item If $p = 3k+1$, then, with
$j=k$ and $i=2j$, we have 
	$$(-j,i-j)=(-k,k)=(2k+1,k)=(i+1,j) ,$$
and, with $j=-k$ and $i=2j$, we have 
	$$(-j,i-j)=(k,-k)=(-2k-1,-k)=(i-1,j) .$$
	\item If $p=3k+2$, then, with $j=k+1$
and $i=2j$, we have 
	$$(-j,i-j)=(-k-1,k+1)=(2k+1,k+1)=(i-1,j) ,$$
and, with
$j=-(k+1)$ and $i=2j$, we have
	$$(-j,i-j)=(k+1,-(k+1))=(-2k-1,-(k+1))=(i+1,j) .$$
	\end{itemize}

\begin{figure}
 \begin{center}
 \includegraphics{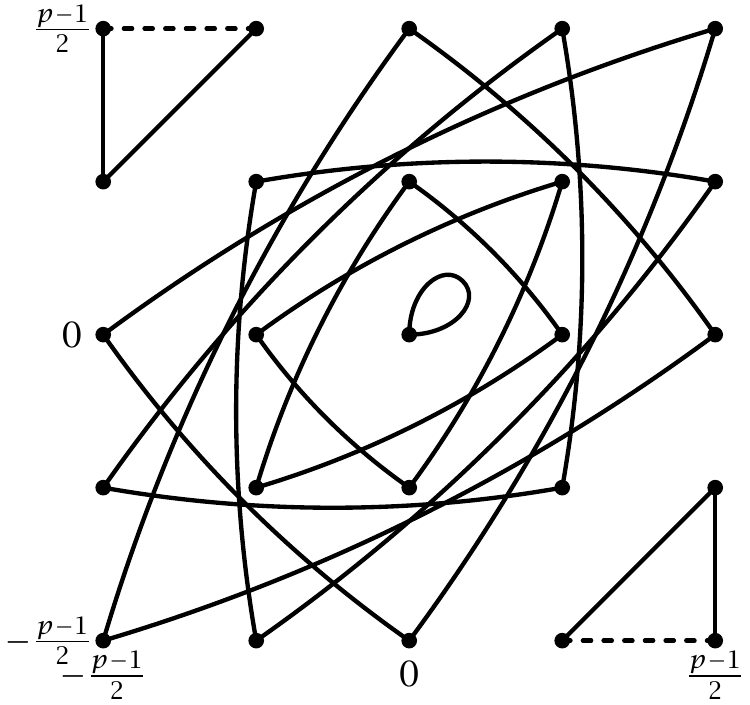}
 \end{center}
 \caption{The $s$-edges in the multigraph when $p = 5$. The $t$-edges
(not drawn) are horizontal. Thus, there are two double edges (dashed, at
top left and bottom right) in the multigraph.}
 \end{figure}

We now construct a hamiltonian cycle in the multigraph.
Beginning at the vertex $(1,1)$, we proceed along the following sequence of edges (where $\{n\}$ denotes the remainder when~$n$ is divided by~$p$, so $0 \le \{n\} < p$):
\begin{align*}
&\Bigl( \bigl[(t^{-1})^{\{3j-1\}}, s^{-1}, t^{\{-3j-1\}}, s^{-1}\bigr]_{j=1}^{(p-3)/2} ,
\\& \quad \bigl[ (t^{-1})^{(p-5)/2}, s^{-1}, t^{(p+1)/2} , s\bigr], 
\\&\quad \quad \bigl[t^{p-1}, s^{-1}\bigr], 
\\& \quad \quad \quad \bigl[t^{\{3j-1\}}, s, (t^{-1})^{\{-3j-1\}}, s^*\bigr]_{j=-(p-1)/2}^{-k-1}, 
\\&\quad \quad \quad \quad \bigl[(t^{-1})^{\{3j-1\}}, s^{-1}, t^{\{-3j-1\}}, s^{-1}\bigr]_{j=-k}^{-2} , 
\\& \quad \quad \quad \quad \quad \bigl[(t^{-1})^{p-4}, s^{-1}, t^2, s\bigr]
\Bigr)
. \end{align*}
Here $s^*$ indicates $s$ unless $j=-k-1$ and $p=3k+1$, in which case it indicates $s^{-1}$.

We make a few observations that will aid the reader in verifying that this is indeed a hamiltonian cycle.  We will see that each portion enclosed within square brackets traverses all of the vertices in some row of the multigraph, and passes on to the next row; the parameter $j$ represents the row being traversed. For convenience, we will use $\overline{n}$ to denote the integer congruent to~$n$ modulo~$p$ that is between $-(p-1)/2$ and $(p-1)/2$ (inclusive).

In the first portion enclosed within square brackets, we begin each row at a vertex of the form $(\overline{2j-1},j)$.  The $t^{-1}$ edges cover every vertex between this and $(-j,j)$, moving leftwards; $s^{-1}$ takes us to the vertex $(\overline{2j},j)$, and the $t$ edges cover every vertex between this and $(\overline{-j-1},j)$, completing the row.  The $s^{-1}$ edge then takes us to $(\overline{2j+1},j+1)$, which has the required form to continue this pattern.  In this way, we cover all of the vertices from rows 1 through $(p-3)/2$.  

Then, the second portion enclosed within square brackets covers the vertices of row $(p-1)/2$ in the same way, except that it ends with an $s$-edge, taking us from $\bigl( (p-1)/2,(p-1)/2 \bigr)$ to $\bigl( -(p-1)/2,0 \bigr)$.

The third portion enclosed within square brackets takes us through the vertices of row 0, ending at $\bigl( (p-1)/2,0 \bigr)$, and to vertex $\bigl( -(p-1)/2,-(p-1)/2 \bigr)$, which has the form $(\overline{-j+1},j)$.  

In the fourth portion enclosed within square brackets, we use the same pattern on rows $-(p-1)/2$ through $-k-1$ that we used on rows $k+1$ through $(p-1)/2$, rotated by 180 degrees.  So in each row we begin at a vertex of the form $(\overline{-j+1},j)$, move right until we reach $(\overline{2j},j)$, then jump to $(-j,j)$, move left to the vertex $(\overline{2j+1},j)$ (completing the row), and pass to vertex $(-j,j+1)$ in the next row, which again has the required form to continue.  The $s^*$~edge ends this portion at the vertex $(k+1,-k)$ if $p=3k+2$, and at $(k,\overline{2k+1})$ if $p=3k+1$; both of these are equal to $(\overline{-2k-1},-k)$.  This has the form $(\overline{2j-1},j)$ again.

Now the fifth portion uses the same pattern we began with, to cover the vertices in rows $-k$ through $-2$.  

The final portion covers row $-1$, using the same pattern, finishing at the vertex $(0,-1)$.  Now edge $s$ takes us to $(1,1)$, completing the hamiltonian cycle.

Notice that in each row (except row 0), this cycle traversed the unique $s$-edge whose endpoints were both in that row.  In particular, this cycle traversed each of the multi-edges noted above.
So \cref{MultiDouble}
implies
there is a hamiltonian cycle in $\Cay(G;S)$.
 \end{proof}


\section{Groups of order $4p^2$} \label{4p2Sect}

We begin with a special case.

\begin{lem}[Jungreis-Friedman {\cite[Thm.~7.3]{JungreisFriedman}}]\label{D2pxD2p}
 If $p$ is prime, then every connected Cayley graph on $D_{2p}\times
D_{2p}$ has a hamiltonian cycle.
 \end{lem}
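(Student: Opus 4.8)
The plan is to reduce the problem to earlier results by examining a minimal generating set~$S$ and handling the small values of~$p$ separately. First I would dispose of $p = 2$ by noting that $D_4 \times D_4$ is a $2$-group, so \cref{pk} applies. For odd~$p$, I would write $G = D_{2p} \times D_{2p}$ and observe that $G$ has an abelian normal subgroup $A = \integer_p \times \integer_p$ of index~$4$ (the direct product of the two cyclic rotation subgroups), with $G/A \iso \integer_2 \times \integer_2$. Since $A$ is the unique Sylow $p$-subgroup, it is normal, and $[G,G] \subseteq A$. If $[G,G]$ happens to be cyclic (of order~$1$ or~$p$), then \cref{KeatingWitte} immediately finishes the argument, so the interesting case is $[G,G] = A = \integer_p \times \integer_p$.

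Next I would analyze the structure of a minimal generating set~$S$, using the normal subgroup $N = \langle s \rangle$ for a suitably chosen element of prime order~$p$ lying in one of the two factors, so that $G/N \iso D_{2p} \times \integer_2$ or a similar quotient of smaller order. The key idea is that $G$ is built from two commuting copies of a dihedral group, so each generator projects to data in the two factors. I would try to arrange, after replacing generators by inverses and applying automorphisms, that either \cref{DoubleEdge} applies (when two elements of $S \cup S^{-1}$ are congruent modulo a prime-order normal subgroup~$N$ while the image of~$S$ remains a minimal generating set of $G/N$), or that the quotient $\Cay(G/N;S)$ is one of the explicitly hamiltonian graphs handled by \cref{VoltageCor} (a M\"obius ladder or prism), so that a hamiltonian cycle lifts. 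Because both factors are dihedral, the reflections furnish involutions~$f$ that invert the rotations, which should let me realize many quotients as prisms or M\"obius ladders on the reflection cosets.

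The main obstacle I anticipate is the bookkeeping for the possible shapes of a minimal generating set of~$G$: since $G/\Phi(G)$ can be as large as $(\integer_2)^2 \times (\integer_p)^2$ modulo the commutator structure, $S$ may have as many as four elements, and the generators can distribute in several inequivalent ways across the two dihedral factors and their diagonal. I would organize the proof into cases according to $\#S$ and according to whether the reflection components of the generators lie in a single factor or genuinely involve both, reducing each case either to \cref{DihType} (viewing an index-$2$ subgroup of dihedral type), to the voltage-graph lifting of \cref{VoltageCor}, or to a direct application of \cref{FGL} after exhibiting an explicit hamiltonian cycle in a small quotient. The delicate point is ensuring in each case that the product of the generators around the lifted cycle generates the relevant cyclic normal subgroup~$N$, which is exactly the hypothesis the Factor Group Lemma requires; verifying this generation condition, rather than finding the cycle itself, is where the real work lies.
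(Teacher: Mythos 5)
Your outline has the right global shape (induct on $|G|$, fix a minimal generating set~$S$, split on $\#S$ and on how the generators distribute over the two dihedral factors), and for several cases the tools you name genuinely work and are arguably cleaner than what the paper does. For instance, when $\#S=2$ each generating set contains an element whose projection to one factor is a nontrivial rotation, so that element and its inverse become equal modulo the corresponding normal $\integer_p$, and \cref{DoubleEdge} applies directly --- whereas the paper verifies the four hypotheses of \cref{Stud61} by hand. Likewise, for $S=\{(f,e),(e,f'),(f_1,f_1')\}$ with $f_1\neq f$ and $f_1'\neq f'$, the quotient by $\integer_p\times\{e\}$ really is the M\"obius ladder $\Cay(\integer_{4p};\{1,2p\})$ (the image of $(f,e)$ is central and is the antipode, along the $4p$-cycle generated by the other two involutions, of the identity), so \cref{VoltageCor} replaces the explicit cycle $\bigl(((f_1,f_1'),(f,e))^p\#,(e,f')\bigr)^{2p}$ that the paper constructs via \cref{FGL(notnormal)}.

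The gap is that the case analysis is never carried out, and your declared toolkit (\cref{DoubleEdge}, \cref{VoltageCor}, \cref{DihType}, \cref{FGL}) provably does not cover all of it. Concretely, take $S=\{(f,e),(f_1,f'),(x,x')\}$ with $f_1\neq f$ and $x,x'$ nontrivial rotations (a minimal generating set that the paper reaches in the last subsubcase of its $\#S=3$ analysis). The only nontrivial proper normal subgroups available are $\integer_p\times\{e\}$ and $\{e\}\times\integer_p$; modulo either one, no two elements of $S\cup S^{-1}$ coincide, so \cref{DoubleEdge} is unavailable; each quotient Cayley graph has valency~$4$, so it is neither a M\"obius ladder nor a prism and \cref{VoltageCor} is unavailable; $G$ is not of dihedral type, so \cref{DihType} is unavailable; and invoking \cref{FGL} would require you to exhibit an explicit hamiltonian cycle in a quotient of order $4p$ and verify the generation condition --- exactly the ``real work'' you defer. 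The paper closes this case with \cref{Stud71}, i.e.\ lifting over the \emph{non-normal} cyclic subgroup $\langle s_1s_2\rangle$ of order $2p$ via \cref{FGL(notnormal)}, a mechanism absent from your plan; without it (or an explicit substitute computation), the argument does not go through. A smaller point: your reduction ``if $[G,G]$ is cyclic then \cref{KeatingWitte} applies'' is vacuous here, since $[G,G]=\integer_p\times\integer_p$ for every odd~$p$.
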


\begin{proof}
Let $S$ be a minimal generating set of $G=D_{2p}\times D_{2p}$, where
$p$ is prime.  We may assume $p \ge 3$, for otherwise
$|G| = 16$ is a power of~$2$, so \cref{{pk}} applies.

Notice that the elements of $S$ cannot
consist exclusively of pairs of nontrivial reflections, together with
pairs of (possibly trivial)
rotations, since such a set cannot generate the element $(f, e)$ of
$D_{2p}\times D_{2p}$
(for any reflection $f$).
This is because getting $f$ in the first coordinate requires taking
the product of an odd number of
elements that are pairs of nontrivial reflections, while getting $e$ in
the second coordinate
requires taking the product of an even number of such elements.  
Therefore, as $S$
must
contain an element with a reflection in its first coordinate, we may
assume that either
$(f, e)\in S$ or $(f, x')\in S$, for some reflection $f$ and some
nontrivial rotation $x'$.

\setcounter{case}{0}
\begin{case} \label{D2pxD2pPf-S=2}
Assume ${|S| = 2}$.
\end{case}
In order to generate
the
entire group, $S$~must  include either two reflections, or a reflection
and a rotation, within each of the dihedral factors.  Up to
automorphism, there are only two generating sets that satisfy this
condition: $\{(x, f'),(f,x')\}$, or $\{(f_1,x'),(f_2,f')\}$, where  $x,
x'$
are nontrivial rotations and $f, f', f_1, f_2$ are reflections and $f_1\neq f_2$.

We now verify that each of the above generating sets satisfies the
conditions of \cref{Stud61},
so that by that \lcnamecref{Stud61},
$\Cay(G;S)$ does indeed have a hamiltonian cycle.

\begin{subcase}
Assume ${s_1=(x,f')}$ and ${s_2=(f,x')}$.  
\end{subcase}
Letting $\gamma=[s_1,s_2]$, we have
	$$\gamma= \bigl( x^{-1}fxf, f'(x')^{-1}f'x' \bigr)= \bigl(x^{-2},(x')^2\bigr) .$$
So
\begin{itemize}
\item $2|s_1||\gamma|=2(2p)(p)=4p^2=|G|$.
\item $\langle s_1\rangle \langle \gamma \rangle
= \bigset{ \bigl( x^ix^{-2j},(f')^i(x')^{2j} \bigr) }{ i, j\in \integer}$,
which never has $f$ in the first coordinate, so $s_2$ is not in this
set.
\item If $\gamma^i=s_1^j$ for some $i,j$, then $(x')^{2i}=(f')^j$, so
we must have
$(x')^{2i}=(f')^j=e$, so $i\equiv 0 \pmod{p}$. But then $x^{-2i}=e$, so
$\gamma^i=(e,e)$.  Therefore, $\langle \gamma\rangle \cap \langle
s_1\rangle =\{e\}$.
\item If $\gamma^i=s_2^{-1}s_1^js_2$ for some $i,j$, then
$(x')^{2i} = (x')^{-1}(f')^jx'$,
so $(x')^{2i} = e = (f')^j$, so $i\equiv 0 \pmod{p}$.
But then $x^{-2i}=e$, so
$\gamma^i=(e,e)$.  Therefore,
$\langle \gamma\rangle\cap(s_2^{-1}\langle s_1\rangle s_2)=\{e\}$.
\end{itemize}

\begin{subcase}
Assume
${s_1=(f_1,x')}$ and ${s_2=(f_2,f')}$, with ${f_1\neq f_2}$.
\end{subcase}
We may assume $f_2=f_1x$.
Then with $\gamma=[s_1,s_2]$, we have
$$\gamma= \bigl( f_1(f_1x)f_1(f_1x), (x')^{-1}f'x'f' \bigr)= \bigl(x^{2},(x')^{-2} \bigr) .$$
So
\begin{itemize}
\item $2|s_1||\gamma|=2(2p)(p)=4p^2=|G|$;
\item $\langle s_1\rangle \langle \gamma \rangle
=\bigset{ \bigl( f_1^ix^{2j},(x')^i(x')^{-2j}) }{ i,j \in \integer}$,
which never has $f'$ in the second coordinate, so $s_2$ is not in this
set.
\item If $\gamma^i=s_1^j$ for some $i,j$, then $x^{2i}=f_1^j$, so we
must have
$x^{2i}=e$, so $i\equiv 0 \pmod{p}$. But then $(x')^{-2i}=e$, so
$\gamma^i=(e,e)$.  Therefore, $\langle \gamma\rangle \cap \langle
s_1\rangle =\{e\}$.
\item If $\gamma^i=s_2^{-1}s_1^js_2$ for some $i,j$, then
$x^{2i} = f_2f_1^jf_2$,
so $x^{2i}=e$, so $i\equiv 0 \pmod{p}$. But
then $(x')^{-2i}=e$, so
$\gamma^i=(e,e)$.  Therefore,
$\langle \gamma\rangle\cap(s_2^{-1}\langle s_1\rangle s_2)=\{e\}$.
\end{itemize}

\begin{case}
Assume ${|S| = 3}$.
\end{case}
In what follows, we will be applying \cref{direct-prod-cor}
repeatedly.
We will verify some of its conditions here, so that only one condition
will need to be
checked each time we use it below.  Namely, we already know that  $S$ is minimal and $|S|\ge 3$. Furthermore, we assume, by induction on $|G|$, that  $\Cay \bigl( \langle S-\{s_1\}\rangle
; S-\{s_1\} \bigr)$ has a hamiltonian cycle, for any $s_1 \in S$.  Thus, in order to apply the 
\lcnamecref{direct-prod-cor}, all that remains is to verify that there exist two distinct
generators $s_1,s_2\in S$ such that $|s_1s_2|=|G|/|\langle
S-\{s_1\}\rangle|$ is
prime.

\begin{subcase} \label{SDisjD2pxe}
Assume $S$ is disjoint from $D_{2p} \times \{e\}$ and $\{e\} \times D_{2p}$.
\end{subcase}
The discussion preceding \cref{D2pxD2pPf-S=2} implies that we may
assume $s = (f, x') \in S$, for some reflection $f$ and some nontrivial rotation $x'$. 

The generating set~$S$ must also be an element whose second coordinate is a reflection. By the assumption of this \lcnamecref{SDisjD2pxe},
the first coordinate cannot be trivial. And it also cannot be a nontrivial rotation (because $S$ is minimal and $|S| = 3$). Therefore, it is a reflection. From the minimality of~$S$, we conclude that it is~$f$: that is, $s_1 = (f,f') \in S$, for some reflection~$f'$.

Now, let $s_2 = (y, y')$ be the third element of~$S$. To generate~$G$, we must have $y \neq f$. (And $y'$ is nontrivial, from the assumption of this \lcnamecref{SDisjD2pxe}.) 
Note that if $y$ is a reflection, then either $\langle s_2, s \rangle = G$ or $\langle s_2, s_1 \rangle = G$, depending on whether $y'$ is a rotation or a reflection. Thus, the minimality of~$S$ implies that $y$ is a rotation~$x$ . Then, since $\langle s, (x, f'') \rangle = G$ for any rotation~$f''$, the minimality of~$S$ implies that $y'$ is a rotation. Thus, $s_2 = (x,x'') \in S$, for some nontrivial rotations $x$ and~$x''$. Then 
	$ |s_1s_2| = |(fx, f'x'')| = 2$
and
	$ \langle S - \{s_1\} \rangle = D_{2p} \times \integer_p $
has index~$2$, so \cref{direct-prod-cor} provides a hamiltonian cycle in $\Cay(G;S)$.

\begin{subcase} 
Assume ${(f,e) \in S}$.
\end{subcase}
As there must be a reflection in the second coordinate
of some element of $S$, we have either $(x, f')\in S$, or $(e, f')\in S$, or $(f_1,f')\in S$.

\begin{subsubcase} \label{elt-order-2p}
Assume ${(x,f') \in S}$, with $|x| = p$.
\end{subsubcase}
Then $(x, f')$ generates a subgroup of $G$ of order $2p$.  Let
$s_1=(f,e)$. Since
	$$ [s_1,G] = \langle (x,e) \rangle \subset \langle (x,f') \rangle \subset \langle S - \{s_1\} \rangle ,$$
we know that $s_1$ normalizes $\langle S - \{s_1\} \rangle$, so 
$G = \langle s_1 \rangle \langle S - \{s_1\} \rangle$.
Furthermore, because $S$ is a minimal generating set, we know $s_1 \notin \langle S - \{s_1\} \rangle$. Since $|s_1| = 2$ is prime, this implies $\langle s_1 \rangle \cap \langle S - \{s_1\} \rangle = \{e\}$, so 
$|G| = |\langle s_1 \rangle| \cdot |\langle S - \{s_1\} \rangle|$. 
Therefore $|G| / |\langle S - \{s_1\} \rangle| = 2$.
Also, with $s_2=(x,f')$, we have $|s_1s_2|=2$.
So \cref{direct-prod-cor} tells us that $\Cay(G;S)$
is hamiltonian.

\begin{subsubcase} \label{abelian-order2-generator}
Assume either ${(e,f')\in S}$, or ${(f,f')\in
S}$.
\end{subsubcase}
Let $r_1=(f,e)$ and let $r_2$ be either $(e,f')$ or $(f, f')$, the other
element
that we know to be in $S$.  
Note that, because $|r_1r_2|=2$,
	\begin{align} \label{Sr1Index2}
	\begin{matrix}
	\text{if either $\langle S-\{r_1\}\rangle$ or $\langle S-\{r_2\}\rangle$ has index 2 in $G$,}
	\\
	\text{then \cref{direct-prod-cor} tells us $\Cay(G;S)$ is hamiltonian.} 
	\end{matrix}
	\end{align}

We claim that we may assume no element of $S$ consists of a nontrivial rotation in one
coordinate
together with a reflection in the other.
To see this, observe, first of all, that we are not in \cref{elt-order-2p}, so the reflection
cannot be in the second coordinate. Thus, we suppose $S$ contains some $(f_1,x')$.
Since $S$ is minimal and contains $(f,e)$, we must have $f_1 \neq f$. 
Therefore $\langle (f_1,x'), (f,f') \rangle = G$, so combining the minimality of~$S$ with the definition of~$r_2$ tells us that
$r_2 = (e,f')$. Thus, \cref{elt-order-2p} applies, after interchanging the two factors of~$G$.

If the third element of~$S$ is of the form $(x,x')$, then $\langle S - \{r_2\}\rangle$ has index~$2$ in~$G$, so $\Cay(G;S)$ has a hamiltonian cycle by \pref{Sr1Index2}. Thus, from the preceding paragraph, we may assume that the third element of~$S$ is of the form
$(f_1,e)$, $(e,f_1')$, or $(f_1,f_1')$.
However, only the last of these can generate all of~$G$ when combined with $r_1$ and~$r_2$.
Therefore
	$$ S = \{ r_1, r_2, (f_1,f_1') \} .$$
Furthermore, we must have $f_1 \neq f$ and $f_1' \neq f'$.

If $r_2=(f ,f ')$, then $|r_2(f _1,f _1')|=p$ and $|\langle r_1, (f _1,f
_1')\rangle|
=4p$, so \cref{direct-prod-cor} applies.  If
$r_2=(e,f ')$
then \cref{direct-prod-cor} cannot be applied,
but we claim that
 $$\bigl(\bigl((f _1,f _1'),(f ,e)\bigr)^p \#,(e,f ')\bigr)^{2p}$$ is a
hamiltonian cycle.
To verify this, we first calculate that:
	\begin{itemize}
	\item $ \bigl( (f_1f)^p f^{-1},(f_1')^pf'  \bigr)= (f,f_1' f') $,
	\item $ \bigl( (f_1f)^i , (f_1')^i  \bigr) 
	\in \langle (f,f_1' f') \rangle \cdot
		\begin{cases}
		\bigl( (f_1 f)^i, e \bigr) & \text{if $i$ is even}, \\
		\bigl( (f_1 f)^i, f_1' \bigr) & \text{if $i$ is odd},
		\end{cases}$
	\item $ \bigl( (f_1f)^i , (f_1')^i  \bigr) \cdot (f_1,f_1')
	\in \langle (f,f_1' f') \rangle \cdot
		\begin{cases}
		\bigl( (f_1 f)^{-(i+1)}, f_1' \bigr) & \text{if $i$ is even}, \\
		\bigl( (f_1 f)^{-(i+1)}, e \bigr) & \text{if $i$ is odd}.
		\end{cases}$
	\end{itemize}
The conclusion that we have a hamiltonian cycle now follows easily from \cref{FGL(notnormal)}.

\begin{subsubcase}
Assume ${(f_1,f') \in S}$ with ${f_1 \neq f}$.
\end{subsubcase}

If the third
element of $S$ is of the form $(f _2, f _1')$, then $f
_1'\neq f '$ (otherwise $S$ would not generate~$G$).
Because $|(f _2,f _1')(f
_1,f ')|=p$
and $|\langle (f ,e),(f _1,f ')\rangle|=4p$, the conditions of \cref{direct-prod-cor} are satisfied with $s_1=(f _2,f _1')$.

If the third element of~$S$ is of the form $(f_2,x')$, then $x'$ must be nontrivial (or else $S$ would not generate~$G$), so $f_2=f _1$; otherwise, $(f ,e)$ is redundant. 
Then, since $|(f _1,x')(f ,e)|=p$ and $|\langle (f ,e),(f _1,f ')\rangle|=4p$,
the conditions of \cref{direct-prod-cor} are satisfied.

If the third element of~$S$ is of the form $(e,x')$, then, since $|x'| = p$, \cref{NormalEasy} provides a hamiltonian cycle in $\Cay(G;S)$.

Finally, since we are not in \cref{elt-order-2p} or \ref{abelian-order2-generator}, we may now assume the third element of~$S$ is of the form $(x,x')$, with $x$ and~$x'$ nontrivial. Let $s_1=(f ,e)$ and $s_2=(f _1,f ')$.
We have $|s_1s_2|=2p$, and $|\langle (x,x'),(f _1,f ')\rangle|=2p$.
Furthermore, $\langle s_1s_2\rangle =\langle (x,e), (e,f') \rangle$.
The intersection of this group with $\langle (x,x') , (f _1, f ')\rangle$
is clearly trivial, so \cref{Stud71} applies.

\begin{case}
Assume ${|S| = 4}$.
\end{case}
Because $S$ generates $G/(\integer_p \times \integer_p)$, we know that it contains an element of the form $(f,e)$ or $(e,f')$. Let us assume $(f,e) \in S$.

\begin{subcase}
Assume $S \cap (\integer_p \times \integer_p) \neq \emptyset$.
\end{subcase}
Let $(x,x') \in S \cap (\integer_p \times \integer_p)$. We may assume that $x$ and~$x'$ are both nontrivial, for otherwise $\langle (x,x') \rangle \normal G$, so \cref{NormalEasy} applies. But then $\langle (f,e), (x,x') \rangle$ has index~$2$ in~$G$, which contradicts the fact that $|S| = 4$.

\begin{subcase}
Assume $S \cap (\integer_p \times \integer_p) = \emptyset$.
\end{subcase}
We may assume $S$ contains an element of the form $(f_1, f')$ with $f_1$ and~$f'$ nontrivial, for otherwise $\Cay(G;S)$ is isomorphic to a Cartesian product $\Cay(D_{2p}; S_1) \times \Cay(D_{2p}; S_1')$, and the Cartesian product of hamiltonian graphs is hamiltonian.

We must have $f_1 = f$, for otherwise $\langle (f,e), (f_1,f') \rangle$ has index~$p$ in~$G$, contradicting the fact that $|S| = 4$. So $(f,f') \in S$.

In order to generate~$D_{2p} \times \{e\}$, $S$ must contain an element whose first coordinate is not~$f$. The second coordinate must be trivial (for otherwise combining it with $(f,e)$ generates a subgroup of index~$p$). That is, we have $(f_2,e) \in S$, with $f_2 \neq f$. But then $\langle (f_2,e), (f,f') \rangle$ generates a subgroup of index~$p$, contradicting the fact that $|S| = 4$.
\end{proof}

\begin{prop} \label{4p2}
   If\/ $|G| = 4p^2$, where $p$ is a prime, then every connected Cayley graph on $G$
   has a hamiltonian cycle.
\end{prop}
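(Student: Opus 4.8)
The plan is to argue by induction on~$|G|$, using the standing convention that quotients and proper subgroups already have hamiltonian Cayley graphs. First I would dispose of $p=2$ (then $|G|=16$ is a prime power, so \cref{pk} applies) and reduce to the situation where a Sylow $p$-subgroup~$P$ is normal and elementary abelian. Indeed, Sylow's Theorem \pref{NoMod1->Normal} forces $P\normal G$ unless $p=3$, and if $P$ is cyclic then $[G,G]\subseteq P$ is a cyclic $p$-group, so \cref{KeatingWitte} applies. The non-normal case (necessarily $p=3$, so $|G|=36$) is small enough to settle directly: one checks that such a~$G$ is either $A_4\times\integer_3$, where \cref{A4xZp} applies, or $\integer_9\ltimes(\integer_2\times\integer_2)$, where \cref{Z3kx(Z2xZ2)} applies. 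Thus I may assume $G=Q\ltimes(\integer_p\times\integer_p)$, where $Q$ is a Sylow $2$-subgroup of order~$4$ (so $Q\iso\integer_4$ or $Q\iso\integer_2\times\integer_2$), acting on $P=\integer_p\times\integer_p$ through a homomorphism $Q\to\mathrm{GL}(2,p)$.

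The heart of the proof is a case analysis on the image~$\bar Q$ of this action, an abelian $2$-subgroup of $\mathrm{GL}(2,p)$. If $\bar Q$ is trivial then $G$ is abelian (\cref{abel}). If $\bar Q$ has order~$2$, generated by $-I$, then every element outside the kernel inverts all of~$P$, so $G$ is of dihedral or quaternion type according as $Q\iso\integer_2\times\integer_2$ or $Q\iso\integer_4$, and \cref{DihType} or \cref{QnHamCyc} applies. If $\bar Q\iso\integer_2\times\integer_2$, then $Q\iso\integer_2\times\integer_2$ acts through two commuting reflections with distinct fixed lines, and one checks $G\iso D_{2p}\times D_{2p}$, so \cref{D2pxD2p} applies. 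The remaining reducible cases --- $\bar Q$ generated by a single reflection $\mathrm{diag}(1,-1)$, or $\bar Q\iso\integer_4$ acting reducibly (which happens exactly when $p\equiv1\pmod4$) --- all leave a $G$-invariant line in~$P$, hence a normal subgroup $N\iso\integer_p$; then $G/N$ has order~$4p$, so $\Cay(G/N;S)$ is hamiltonian by \cref{4p}, and I would lift using the Factor Group Lemma \pref{FGL} together with \cref{MultiDouble}, \cref{VoltageCor}, or \cref{NormalEasy} to arrange that the product of the generators generates~$N$.

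The one genuinely hard case is $\bar Q\iso\integer_4$ acting \emph{irreducibly}, which occurs precisely when $p\equiv3\pmod4$ (so that the eigenvalues $\pm i$ of a generating matrix~$\sigma$ lie in $\mathbb{F}_{p^2}\setminus\mathbb{F}_p$). Here $P$ is the unique minimal normal subgroup, the only normal subgroups are $\{e\}$, $P$, $\langle c^2\rangle P$, and~$G$, and $G$ has no abelian subgroup of index~$2$; in particular $G$ has no normal subgroup of prime order, so neither \cref{KeatingWitte}, nor the dihedral/quaternion results, nor any Factor-Group-Lemma or voltage-graph reduction is available. I expect this to be the main obstacle. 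My plan for it mirrors the argument for \cref{3p2}: take the non-normal cyclic subgroup $H=\langle c\rangle$ of order~$4$ (so $|G:H|=p^2$), identify the quotient multigraph $H\backslash\Cay(G;S)$ as a $4$-regular graph on the $p^2$ cosets (indexed by~$P$, with the edges coming from~$\sigma$ and from translation within~$P$), locate its double edges, and construct an explicit hamiltonian cycle that traverses one of them; \cref{FGL(notnormal)} and \cref{MultiDouble} then produce a hamiltonian cycle in $\Cay(G;S)$. The combinatorial bookkeeping of this explicit cycle, with its case split according to the residue of~$p$ (the analogous difficulty in \cref{3p2}), is where the real work lies.
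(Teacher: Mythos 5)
Your opening reductions match the paper's: Sylow plus Burnside's normal $p$-complement theorem for the non-normal Sylow $p$-subgroup, \cref{KeatingWitte} when $[G,G]$ is cyclic, dihedral/quaternion type, and $D_{2p}\times D_{2p}$ via \cref{D2pxD2p}, leaving $G=\integer_4\ltimes(\integer_p\times\integer_p)$ with $[G,G]=\integer_p\times\integer_p$. But your plan for the case you single out as hardest has a concrete flaw: \cref{MultiDouble} requires $|H|$ to be \emph{prime}, and you propose to apply it with $H=\langle c\rangle$ of order~$4$. Traversing a double edge of $H\backslash\Cay(G;S)$ only guarantees that the product $s_1s_2\cdots s_{p^2}$ is a \emph{nontrivial} element of~$H$; for $H\iso\integer_4$ that element could be $c^2$, in which case \cref{FGL(notnormal)} produces a hamiltonian cycle only in $\langle c^2\rangle\backslash\Cay(G;S)$, not in $\Cay(G;S)$. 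You would have to verify that the product has order exactly~$4$, or run the lift in two stages via \cref{FGL(skewgen)}; your sketch does neither. The paper sidesteps this entirely: it never quotients by the full Sylow $2$-subgroup, but writes $S=\{f,f^kz\}$ with $z\in\integer_p\times\integer_p$ and, for each $k\in\{0,1,2\}$, exhibits an explicit closed walk whose vertices it checks lie in distinct cosets of a cyclic subgroup of \emph{prime} order ($\langle f^2\rangle$ for $k=0$, a subgroup of order~$p$ for $k=1,2$), so that \cref{FGL(notnormal)} applies directly. These constructions are uniform in~$p$ and do not separate the irreducible case from the reducible one with $f^2$ inverting.

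A second gap: your claim that every reducible case reduces to a lift from $G/N$ of order $4p$ is not substantiated when $|S|=2$ and the second generator is $f^kz$ with $z$ not an eigenvector (which happens in reducible cases too). For an eigenline $N\iso\integer_p$, the multigraph $N\backslash\Cay(G;S)$ then has no double edges, $\Cay(G/N;S)$ is $4$-valent and hence not a prism or M\"obius ladder, and no element of~$S$ generates a cyclic normal subgroup, so none of \cref{MultiDouble}, \cref{VoltageCor}, or \cref{NormalEasy} applies, and there is no a priori reason the product along a lifted hamiltonian cycle generates~$N$. The paper's explicit cycles for $k=0,1$ (and, in the one genuinely reducible subsubsubcase with $k=2$, the observation that $(f^2xy)^2=y^2$ lies in the normal subgroup $\langle y\rangle$, so \cref{DoubleEdge} applies) are doing real work that your outline replaces with a hope. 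The $|S|=3$ configurations, by contrast, are handled in the paper exactly as you suggest, by \cref{NormalEasy}, \cref{DoubleEdge}, and \cref{Stud71} applied to the normal eigenlines.
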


\begin{proof}
Let $S$ be a minimal generating set of~$G$.
   Clearly we can assume that $p \geq 3$, for otherwise $|G|$ is a prime power, so \cref{{pk}}
   applies.
   Let $P$ denote a Sylow $p$-subgroup of $G$, and let $P_2$ be a Sylow $2$-subgroup.

   \setcounter{case}{0}

   \begin{case} 
   Assume ${P \notnormal G}$.    
   \end{case}
   Then by Sylow's theorem \pref{NoMod1->Normal} we have $p = 3$ and $|G : N_G(P)| = 4$, so $N_G(P) = P$.
   Since $P$ is Abelian we thus have $P = Z \bigl( N_G(P) \bigr)$ and Burnside's theorem
   on normal $p$-complements \cite[Thm.~7.4.3]{Gorenstein-FinGrps} implies that $P_2 \normal G$.
   Since $[G,G] \leq P_2$ we can assume $[G,G] = P_2 \cong \integer_2 \times \integer_2$ for
   otherwise \cref{KeatingWitte} applies. The kernel of the
   action of $P$ on $\integer_2 \times \integer_2$ is thus of order $3$ and the only
   groups to consider are
   $\integer_9 \ltimes (\integer_2 \times \integer_2)$ and
   $A_4 \times \integer_3$, which are covered in \cref{Z3kx(Z2xZ2)} and \cref{A4xZp}, respectively.

   \begin{case} 
   Assume ${P \normal G}$.   
   \end{case}
   Since $P_2$ is Abelian we have $[G,G] \leq P$ and we can assume
   $[G,G] = P \cong \integer_p \times \integer_p$ for otherwise
   \cref{KeatingWitte} applies. 
   
   If $P_2 \cong \integer_2 \times \integer_2$ then
   either there is a nonidentity element of $P_2$ acting trivially on $P$, in which case
   $G$ is of dihedral type and \cref{DihType} applies, or all three nonidentity elements
   of $P_2$ act nontrivially on $P$ and $G \cong D_{2p} \times D_{2p}$ which is covered
   by \cref{D2pxD2p}. 
   
   We can thus assume that $P_2 \cong \integer_4$.
   Denote a generator of $\integer_4$ by $f$ and the generators of $\integer_p \times \integer_p$
   by $x$ and $y$. 
      Note that since $[G,G] = P$, we know $f$ cannot act trivially on any
   nonidentity element of $P$.

   Because $S$ generates $G/(\integer_p \times \integer_p)$, it has to contain an element of the form $fx^iy^j$ or its inverse. Conjugating
   by an appropriate power of $x$ and $y$, we can thus assume 
   	$$f \in S .$$
   Also, in order to generate~$P$, the set~$S$ must have at least one element of the form $f^ix^jy^k$, with either
   $j$ or $k$ nonzero.
 
   Also, since $(f^2)^2 = e$, we know that $1$ and~$-1$ are the only possible eigenvalues of the linear transformation defined by $f^2$ on the vector space $\integer_p \times \integer_p$. Thus, by choosing $x$ and~$y$ to be eigenvectors of~$f^2$, we may assume $x^{f^2} \in \{x^{\pm 1}\}$ and $y^{f^2} \in \{y^{\pm 1}\}$.

 \begin{subcase}
 Assume $f^2$ acts trivially on $\integer_p \times \integer_p$.
 \end{subcase}
   This means that $f$ acts by an automorphism of order~$2$. Since the automorphism does not fix any nontrivial element of $\integer_p \times \integer_p$, this implies that $g^f = g^{-1}$ for all $g \in \integer_p \times \integer_p$. Hence $G$ is of quaternion type, so \cref{QnHamCyc} applies.

 \begin{subcase}
 Assume $|S| = 2$ {\upshape(}and $f^2$ is nontrivial on $\integer_p \times \integer_p${\upshape)}.
 \end{subcase}
Write $S = \{\, f, f^k z \}$, with $z \in \langle x, y \rangle$. We may assume $0 \le k \le 2$. Note that we must have $z^f \notin \langle z \rangle$, because $S$ generates~$G$.
 
 \begin{subsubcase}
 Assume $k = 0$.
 \end{subsubcase}
 Note that, since $[z, f]$ is not in $\langle z \rangle$ or $\langle z \rangle^f$, 
       	\begin{itemize}
	\item every element of $\integer_p \times \integer_p$ has a unique representation of the form $[z, f]^i z^j$ with $0 \le i,j < p-1$,
	and
	\item every element of $f^{-1} \langle z \rangle$ has a unique representation of the form $[z, f]^i z^{-1}  f^{-1} z^{-j}$ with $0 \le i,j < p-1$.
	\end{itemize}
Therefore the vertices visited by the path $( z^{p-1}, f^{-1}, z^{-(p-1)}, f)^p\#$ are all in different right cosets of $\langle f^{-2} \rangle$, so \cref{FGL(notnormal)} tells us that 
	$$\bigl( ( z^{p-1}, f^{-1}, z^{-(p-1)}, f )^p\#, f^{-1} \bigr)^2 $$
is a hamiltonian cycle in $\Cay(G;S)$.
 
 \begin{subsubcase}
 Assume $k = 1$.
 \end{subsubcase} 
Note that
	$ \bigl( (f^3,fz)^{p-1},f^{-3} \bigr) $
is a hamiltonian path in the subgraph of $\Cay \bigl( G; \{f, fz\} \bigr)$ induced by $\langle z \rangle \langle  f \rangle$. Letting $g = z^{-1}( z^{-1})^{f^{-1}}$, we have $g \notin \langle z \rangle$, so the vertices in the path are all in different right cosets of $\langle g \rangle$. Therefore,  \cref{FGL(notnormal)} tells us that
	$$ \bigl(( f^3,fz \bigr)^{p-1}, f^{-3}, (fz)^{-1} \bigr)^p $$
is a hamiltonian cycle in $\Cay(G;S)$.
   
 \begin{subsubcase}
 Assume $k = 2$.
 \end{subsubcase} 
 
 \begin{subsubsubcase}
 Assume $f^2$ does not invert $\integer_p \times \integer_p$.
 \end{subsubsubcase}
We may assume $x^{f^2} = x^{-1}$, $y^{f^2} = y$, and $z = xy$.
\cref{DoubleEdge} applies, because
      $(f^2xy)^2 = y^2 \in \langle y \rangle \normal G$.

 \begin{subsubsubcase}
 Assume $f^2$ inverts $\integer_p \times \integer_p$.
 \end{subsubsubcase}
 We claim that
      $$ \Bigl(  \bigl( (f^2z,f^{-1})^2,(f^2z,f)^2 \bigr)^{(p-1)/2},f^2z,f^{-1},f^2z,f\Bigr) ^p$$
 is a hamiltonian cycle.

To see this, we first calculate the product
	$$  \Bigl( \bigl( (f^2z)(f^{-1}) \bigr)^2\bigl((f^2z)(f) \bigr)^2 \Bigr)^{(p-1)/2}(f^2z)(f^{-1})(f^2z)(f)
	= z^{-1} .$$
Now, we calculate the vertices  $(g_0,g_1,g_2,\ldots,g_{4p-1})$ visited by the walk 
	$$ \Bigl(  \bigl( (f^2z,f^{-1})^2,(f^2z,f)^2 \bigr)^{(p-1)/2},f^2z,f^{-1},f^2z\Bigr) $$
in the quotient graph $\langle z \rangle \backslash \Cay(G; S)$. Letting $y = z^f$, and noting that each vertex of the quotient has a unique representative in $\langle y \rangle \langle f \rangle$,
we calculate that if
	\begin{itemize}
	\item $0 \le i < (p-1)/2$ and $0 \le j < 8$, or
	\item $i = (p-1)/2$ and $0 \le j < 4$,
	\end{itemize}
then
	$$
	g_{8i+j} =
	\langle z \rangle \, y^{2i} \cdot
	\begin{cases}
	\hfil e & \text{if $j = 0$}, \\
	\hfil f^2 & \text{if $j = 1$}, \\
	\hfil f & \text{if $j = 2$}, \\
	\hfil yf^3 & \text{if $j = 3$}, \\
	\hfil yf^2& \text{if $j = 4$}, \\
	\hfil y & \text{if $j = 5$}, \\
	\hfil yf & \text{if $j = 6$}, \\
	\hfil y^2 f^3 & \text{if $j = 7$}.
	\end{cases} $$
All of these are distinct, so \cref{FGL(notnormal)} tells us we have a hamiltonian cycle.

 \begin{subcase}
 Assume $|S| = 3$ {\upshape(}and $f^2$ is nontrivial on $\integer_p \times \integer_p${\upshape)}.
 \end{subcase}

   \begin{subsubcase} 
   Assume ${x^{f^2} = x^{-1}}$ and ${y^{f^2} = y}$.
   \end{subsubcase}      
   Since $f$ obviously commutes with $f^2$, it must preserve the eigenspaces of~$f^2$. So $x^f \in \langle x \rangle$ and $y^f \in \langle y \rangle$. So $x^f = x^r$, where $r^2 \equiv -1 \pmod{p}$, and $y^f = y^{-1}$.
   Since $|S| = 3$, we know that $S$ does not contain any element of the form ${f^\ell x^iy^j}$ with $i$ and $j$ both nonzero, so we can clearly assume that $S = \{f, f^{\ell_1}x, f^{\ell_2}y\}$. We may also assume $0 \le \ell_1,\ell_2 \le 2$ (after replacing generators by their inverses, if necessary).
      \begin{itemize}
      \item If $\ell_2 = 0$, then \cref{NormalEasy} applies
      since $\langle y \rangle \normal G$. 
      \item If $\ell_2 = 1$, then \cref{DoubleEdge} applies
      since $fy \equiv f \pmod{\langle y \rangle}$ and $\langle y \rangle \normal G$. 
       \item If $\ell_2 = 2$, then \cref{DoubleEdge} applies
      since $(f^2 y)^2 = y^2$ and $\langle y^2 \rangle = \langle y \rangle \normal G$. 
      \end{itemize}
      
   \begin{subsubcase} 
   Assume ${x^{f^2} = x^{-1}}$ and ${y^{f^2} = y^{-1}}$.
   \end{subsubcase}     
We may assume $S = \{f, f^{\ell_1}x, f^{\ell_2}y\}$. Since $|S| = 3$, we must have $x^f \in \langle x \rangle$ and $y^t \in \langle y \rangle$. Therefore $x^f = x^a$ and $y^f = y^b$, where $a$ and~$b$ are square roots of~$-1$ in the field $\integer_p$.

	 If either $\ell_1$ or~$\ell_2$ is equal to $0$, $1$, or~$3$, then we may
   apply \cref{NormalEasy} or \cref{DoubleEdge}, because $\langle x \rangle$ and $\langle y \rangle$ are both normal subgroups of~$G$. We are therefore left with
   $S = \{f, f^2x, f^2y\}$. Take $s_1 = f^2x$ and $s_2 = f^2y$. Then $s_1s_2 = x^{-1} y$ is of order~$p$
   and since $|\langle f, f^2y\rangle| = 4p$ and $s_1s_2 \notin \langle f, f^2y\rangle$, we see that \cref{Stud71}
   applies.
\end{proof}


\section{Groups of order $pqr$} \label{pqrSect}

\begin{prop} \label{2pq}
If\/ $|G| = 2pq$, where $p$ and~$q$ are prime, then every connected Cayley graph on~$G$ has a hamiltonian cycle.
\end{prop}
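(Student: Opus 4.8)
The plan is to locate a cyclic normal subgroup of prime order, push the problem down to the quotient, and treat the unavoidable dihedral phenomena by hand. First I would clear away degenerate cases: if $p=q$ then $|G|=2p^2$ and \cref{2p2} applies, and if $2\in\{p,q\}$ then $|G|$ has the form $4\ell$ with $\ell$ prime, so \cref{4p} applies. Hence I may assume $2,p,q$ are distinct primes with $p<q$ odd. As $|G|=2pq$ is squarefree, $G$ is solvable, so \cref{Hall-solv} furnishes a Hall subgroup $H$ of order $pq$; being of index~$2$ it is normal. A group of order $pq$ has a characteristic (normal) Sylow $q$-subgroup $Q\cong\integer_q$, and since $H\normal G$ this $Q$ is normal in~$G$. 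Thus $G$ has a cyclic normal subgroup $Q$ of prime order~$q$ with $|G/Q|=2p$, so $G/Q$ is either $\integer_{2p}$ or $D_{2p}$, and $\Cay(G/Q;S)$ has a hamiltonian cycle.

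Next I would split on the structure of~$H$. By Schur--Zassenhaus, $G=H\rtimes\langle f\rangle$ with $f$ an involution. If $H$ is abelian, then $H\cong\integer_{pq}$ is a cyclic index-$2$ subgroup and $f$ acts on $\integer_p\times\integer_q$ as $(\pm1,\pm1)$: in the $(+,+)$ case $G$ is abelian (\cref{abel}); in the $(-,-)$ case $G\cong D_{2pq}$ (\cref{dihedral}); and in the mixed cases $[G,G]$ is cyclic of prime order, so \cref{KeatingWitte} applies. If $H$ is nonabelian, then $H=\integer_q\rtimes\integer_p$ with $[H,H]=Q$, and whenever $G/Q$ is abelian we have $[G,G]=Q\cong\integer_q$, so \cref{KeatingWitte} again finishes. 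This isolates one genuinely hard configuration: $H$ nonabelian and $G/Q\cong D_{2p}$. Here $[G,G]=H$ is not cyclic and, since the only index-$2$ subgroup is $H$ itself (being the unique subgroup containing $[G,G]$), $G$ has no abelian subgroup of index~$2$; thus none of \cref{KeatingWitte}, \cref{dihedral}, \cref{DihType} applies.

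For this hard case I would take $S$ minimal and try to lift a hamiltonian cycle of $\Cay(G/Q;S)$ via the Factor Group Lemma \pref{FGL}, exploiting that $|Q|=q$ is prime. If some element of $S$ lies in $Q$, then \fullcref{NormalEasy}{p} applies. If two elements of $S\cup S^{-1}$ coincide modulo~$Q$ --- which happens precisely when a reflection of $G/Q$ is the image of an element of order $2q$ --- then \cref{DoubleEdge} applies. When $\#S=2$, at least one generator has reflection image; two involutions would generate a dihedral group, forcing $G\cong D_{2pq}$ and $H$ abelian, a contradiction, so the remaining possibility (after the previous reduction) is an involution together with an element of order~$p$, whose product necessarily has order $2q$ (it cannot be an involution, else $\langle S\rangle$ would have order $2p$); then \cref{Stud71} produces the cycle. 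Many three-element generating sets reduce, via the fact that $D_{2p}$ is $2$-generated, to a $D_{2p}$-complement $K=\langle S-\{s\}\rangle$ (hamiltonian by \cref{dihedral}) together with one further generator, and \cref{direct-prod-cor} then applies once the relevant product is exhibited to have order~$q$.

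The main obstacle is the residual configuration in which $S\cup S^{-1}$ contains no two elements congruent modulo~$Q$ and no Factor-Group-Lemma product can be made to have order~$q$; the prototype is a minimal generating set of three involutions whose images are three distinct reflections of $D_{2p}$. Such sets do occur (three involutions with pairwise products of order~$p$ in different Sylow subgroups generate~$G$), yet they admit no double edge, so \cref{MultiDouble} and \cref{DoubleEdge} are unavailable, and the obvious hamiltonian cycles of the quotient lift with trivial voltage into~$Q$. Resolving this forces one either to exhibit by hand a hamiltonian cycle in $\Cay(G/Q;S)\cong\Cay(D_{2p};\,\cdot\,)$ whose lift carries a nontrivial voltage into~$Q$, or to apply \cref{FGL(notnormal)} or \cref{FGL(skewgen)} to the index-$q$ dihedral complement~$K$; proving that a voltage-nontrivial cycle exists for all admissible $p$ and~$q$ is the crux of the whole argument.
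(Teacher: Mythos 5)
Your reductions to the case $2<p<q$ and the identification of the normal Sylow $q$-subgroup $Q$ match the paper, as does the dispatch of the subcases where $H$ is abelian or $G/Q$ is abelian. The problem is the configuration you single out as "genuinely hard" --- $H$ nonabelian together with $G/Q\iso D_{2p}$ --- which you do not resolve: your final paragraph ends by declaring the existence of a voltage-nontrivial lift to be "the crux of the whole argument" without establishing it, so as written the proof is incomplete.

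In fact that configuration is vacuous, and the observation that kills it is precisely the one step of the paper's proof missing from your outline: conjugation gives a homomorphism $G\to\Aut(Q)\iso(\integer_q)^\times$, and this target is \emph{abelian}, so $[G,G]$ centralizes~$Q$. If $G/Q\iso D_{2p}$, then $[G,G]\subseteq H$ surjects onto the rotation subgroup of order~$p$ of $G/Q$, so $[G,G]$ contains a Sylow $p$-subgroup $P_0$ of~$H$; since $P_0$ centralizes $Q$, we get $H=P_0\times Q$ abelian, contradicting the hypothesis of your hard case. (Equivalently: in your hard case $[G,G]=H$, so $Q\le Z(H)$, and $H/Q$ cyclic then forces $H$ abelian.) Thus every group of order $2pq$ is abelian, dihedral, or has commutator subgroup of prime order, \cref{abel}, \cref{dihedral}, or \cref{KeatingWitte} finishes in every case, and the entire lifting machinery of your last two paragraphs is unnecessary; your claimed example of three involutions generating such a~$G$ cannot exist for the same reason.
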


\begin{proof}
Let $S$ be a minimal generating set of~$G$.

We may assume $p$ and~$q$ are distinct, for otherwise $|G| = 2p^2$, so \cref{2p2} applies.
Thus, there is no harm in assuming that $p < q$. We may also assume that $p,q \ge 3$, for otherwise $|G|$ is of the form $4p$, so \cref{4p} applies.

Let $Q$ be a Sylow $q$-subgroup of~$G$.
Because $|G| = 2pq = 2 \times \text{odd}$, it is well known that $G$ has
a (unique) normal subgroup of order~$pq$ \cite[Thm.~4.6]{Wielandt-FinitePermGrps}. Since
$p < q$, Sylow's Theorem \pref{NoMod1->Normal} tells us that $Q$ is normal in this subgroup.
Then, being characteristic, $Q$ is normal in~$G$.

The quotient group $G/Q$ is of order~$2p$. We may assume it is
nonabelian, for otherwise $[G,G] =
Q$ is cyclic of prime order, so \cref{KeatingWitte} applies. 
Therefore $G \iso D_{2p} \ltimes Q$. Because $\Aut Q
\iso (\integer_q)^\times$ is abelian, we know that the commutator subgroup
of~$D_{2p}$ centralizes~$Q$. Hence $G \iso \integer_2 \ltimes(\integer_p
\times \integer_q)$. Then either $G \iso D_{2pq}$ is dihedral (so
\cref{dihedral} applies) or $[G,G]$ has prime order (so
\cref{KeatingWitte} applies). 
\end{proof}

\begin{prop}[D.\,Li \cite{Li-pqr}] \label{pqr}
 If $|G| = pqr$, where $p$, $q$, and~$r$ are distinct primes, then every
connected Cayley graph on~$G$ has a hamiltonian cycle.
 \end{prop}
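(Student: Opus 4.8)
The plan is to quotient by a normal subgroup of prime order and lift a hamiltonian cycle from the quotient, whose order is a product of two primes. First I would dispose of the case where one of $p,q,r$ equals~$2$: then $|G|$ is twice a product of two distinct odd primes, so \cref{2pq} applies. Hence we may assume $p,q,r$ are all odd and, after relabelling, $3 \le p < q < r$. Since $|G| = pqr$ is squarefree, $G$ is solvable, indeed supersolvable, so it has an ordered Sylow tower; in particular the Sylow $r$-subgroup $R \iso \integer_r$ (for the largest prime) is normal in~$G$. Put $N = R$, a cyclic normal subgroup of prime order with odd index $|G:N| = pq$. By induction $\Cay(G/N;S)$ has a hamiltonian cycle; in fact $|G/N| = pq$ forces $[G/N,G/N]$ to be cyclic of prime order or trivial, so \cref{KeatingWitte} applies to the quotient directly.

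Because every Sylow subgroup of~$G$ is cyclic, $G$ is metacyclic and $[G,G]$ is cyclic. If $[G,G]$ has prime-power order, then (being squarefree) it is cyclic of prime order or trivial, so \cref{KeatingWitte} finishes the proof at once. The remaining, genuinely nonabelian, cases are those in which $[G,G]$ is cyclic of composite squarefree order, the prototype being $[G,G] \iso \integer_{qr}$ with $G \iso \integer_r \rtimes (\integer_q \rtimes \integer_p)$.

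In these cases I would lift via the Factor Group Lemma~\pref{FGL}. The product $s_1 s_2 \cdots s_n$ around any lift of the quotient's hamiltonian cycle lies in $N \iso \integer_r$; since $|N|$ is prime, it suffices to make this product nontrivial, for then it generates~$N$. To secure this I would run through the possible minimal generating sets of $G/N$ and, in each configuration, invoke whichever lifting result applies: a double edge in the quotient multigraph $N\backslash\Cay(G;S)$ yields two hamiltonian cycles whose holonomies differ by a generator of~$N$ (so one of them generates~$N$), giving \cref{MultiDouble} or \cref{DoubleEdge}; the odd index makes \cref{AlspachLifting} available whenever some $s \in S$ generates a normal subgroup; and \fullcref{NormalEasy}{notZ} handles the subcases in which a generator lies in the noncentral subgroup~$N$.

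The hard part will be this final lifting step in the fully nonabelian case with $[G,G]$ cyclic of composite order. There one must certify, for every shape of the minimal generating set, that at least one lifting lemma applies---equivalently, that either the quotient multigraph has a hamiltonian cycle through a double edge, or some element of~$S$ generates a normal subgroup (necessarily of odd index) with hamiltonian quotient. Showing that these alternatives exhaust all configurations, and exhibiting an explicit hamiltonian cycle in any residual case, is the delicate, case-heavy core of the argument.
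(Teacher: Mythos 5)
Your reduction is sound and parallels the paper's: dispose of $2pq$ via \cref{2pq}, use squarefreeness to make $[G,G]$ cyclic, and apply \cref{KeatingWitte} unless $[G,G] \iso \integer_{qr}$, so that $G \iso \integer_p \ltimes \integer_{qr}$ with $p$ the smallest prime acting nontrivially on both $\integer_q$ and~$\integer_r$. But the core of the proof --- actually producing a hamiltonian cycle in this last case --- is the part you defer, and the fallback lemmas you list will not close it. After applying \fullcref{NormalEasy}{notZ} (noting $Z(G) \cap \integer_{qr}$ is trivial) to assume $S \cap \integer_{qr} = \emptyset$, every element of the minimal generating set has order~$p$, and minimality forces $\#S = 2$, say $S = \{s,t\}$ with $t = x s^k$ for a generator~$x$ of $\integer_{qr}$. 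In your quotient $\Cay(G/\integer_r;S)$ there is then no double edge (so \cref{DoubleEdge} and \cref{MultiDouble} give nothing), no element of~$S$ generates a normal subgroup (each generates a non-normal Sylow $p$-subgroup, so \cref{AlspachLifting} is unavailable), and there is no a priori reason that the voltage of an arbitrary hamiltonian cycle in the order-$pq$ quotient is nontrivial in~$\integer_r$. So the ``delicate, case-heavy core'' you leave open is exactly the proof, and the specific tools you name do not apply to the one configuration that actually occurs.

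The paper closes this gap with a different quotient: it factors by the whole cyclic normal subgroup $\integer_{qr}$ at once, so the quotient is $\integer_p$ and the explicit closed walk $(t, s^{-(k-1)}, t, s^{p-k-1})$ is visibly a hamiltonian cycle there; its voltage is $x \cdot s x s^{-1}$, which generates $\integer_{qr}$ because $s$, having odd order, cannot invert either $\integer_q$ or~$\integer_r$. The Factor Group Lemma \pref{FGL} then yields the hamiltonian cycle $(t, s^{-(k-1)}, t, s^{p-k-1})^{qr}$ in one line. If you wish to keep the prime-order quotient $\integer_r$, you must instead construct by hand a hamiltonian cycle in $\Cay \bigl( \integer_p \ltimes \integer_q; S \bigr)$ whose lift has nontrivial $\integer_r$-voltage; nothing in your outline does this.
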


\begin{proof}
The case where $|G| = 2pq$ has been discussed in \cref{2pq}, so let us assume $|G|$ is odd.

Also assume $p$ is the smallest of $p$, $q$ and~$r$. 
  Because $|G|$ is square-free, it is well known (and not difficult to
prove) that $[G,G]$ must be cyclic of some order dividing $|G|/p$ \cite[Cor.~9.4.1]{Hall-ThyGrps}.
(In particular, $G$ is solvable.)
We may assume $|[G,G]|$ is not prime, so we conclude that
 $$ [G,G] \iso \integer_{qr} .$$
 Thus, $G$ is a semidirect product: up
to
isomorphism, we have
 $$ G = \integer_p \ltimes ( \integer_q \times \integer_r ) ,$$
 where $\integer_p$ acts nontrivially on both $\integer_q$
and~$\integer_r$.

Now, let $S$ be a minimal generating set of~$G$.
Since $(\integer_q \times \integer_r) \cap Z(G) = \emptyset$, \cref{NormalEasy} tells us that we may assume $S \cap (\integer_q \times \integer_r) = \emptyset$. Thus, every element of~$S$ has order~$p$.

\setcounter{case}{0}

\begin{case}
Assume ${|S| = 2}$.
\end{case}
 We may write $S = \{s,t\}$. 
 We have $t = x s^k$ for some generator~$x$ of $\integer_q \times
\integer_r)$ and some integer~$k$ with $1 \le k < p$. Then
 $$ t s^{-(k-1)} t s^{p-k-1} = x s x s^{-1} $$
 is a generator of $\integer_q \times \integer_r$ (because $s$, being of
odd order, cannot invert either $\integer_q$ or $\integer_r$). So \cref{FGL} tells us that
 $$ ( t, s^{-(k-1)}, t, s^{p-k-1})^{qr} $$
 is a hamiltonian cycle in $\Cay(G;S)$.

\begin{case}
Assume ${|S| = 3}$.
\end{case}
 We may write $S = \{s,t,u\}$. 
 The minimality of~$S$, together with the fact that $S \cap (\integer_q \times \integer_r) = \emptyset$, implies $t = s^i x$ and $u = s^j y$, where $x$ and~$y$ are generators of $\integer_q$ and~$\integer_r$, respectively, and $1 \le i,j < p$. Then $\langle t,u \rangle = G$,
 which contradicts the
fact that the generating set~$S$ is minimal.
 \end{proof}

\begin{cor} \label{3pq}
 If\/ $|G| = 3pq$, where $p$ and $q$ are prime, then every connected Cayley graph
on~$G$ has a hamiltonian cycle.
 \end{cor}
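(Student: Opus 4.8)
The plan is to prove this entirely by reduction to results already established for orders $pqr$, $3p^2$, $2p^2$, and $p^2q$, sorted according to how the three primes $3$, $p$, and $q$ collide. Since the statement allows $p$ and $q$ to be arbitrary primes (possibly equal to each other or to~$3$), the first step is to split into cases based on the multiset $\{3,p,q\}$. Note that, because the hypothesis is symmetric in $p$ and~$q$, there is no loss in choosing a convenient labeling in each case.

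First I would dispose of the generic case: if $3$, $p$, and~$q$ are pairwise distinct, then $|G| = 3pq$ is a product of three distinct primes, so \cref{pqr} applies directly.

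Next come the degenerate cases where two of the primes coincide. If $p = q$, then $|G| = 3p^2$, so \cref{3p2} applies; this also covers $p = q = 3$, since \cref{3p2} is stated for every prime~$p$ (its proof sends the resulting order~$27$ to the prime-power case). The only remaining situation is that exactly one of $p$, $q$ equals~$3$; by symmetry I may assume $p = 3$ and $q \neq 3$, so that $|G| = 3^2 q$. Here the key observation is that this is of the form $p^2 q$ with $3$ and~$q$ playing the two roles: if $q \neq 2$, then $q$ is an odd prime, so $q \nmid 8$, giving $3^2 \not\equiv 1 \pmod q$ and allowing \cref{p2q-easy} to apply; and if $q = 2$, then $|G| = 18 = 2 \cdot 3^2$ is of the form $2p^2$, so \cref{2p2} applies.

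There is no substantive obstacle here---the entire content is bookkeeping to confirm that every collision pattern of $\{3,p,q\}$ lands in a previously handled form. The one point requiring genuine (if minor) care is the subcase $|G| = 3^2 q$, which is not literally one of the named shapes $3p^2$ or~$pqr$: it must instead be recognized as~$p^2q$, the congruence hypothesis $3^2 \not\equiv 1 \pmod q$ of \cref{p2q-easy} must be checked, and the single exceptional prime $q = 2$ (where that congruence fails) must be split off and sent to \cref{2p2}.
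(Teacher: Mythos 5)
Your proposal is correct and follows essentially the same route as the paper: a case analysis on collisions among $\{3,p,q\}$, reducing to \cref{pqr}, \cref{3p2}, \cref{p2q-easy} (with the check that $3^2 \not\equiv 1 \pmod{q}$ for odd $q \neq 3$), and \cref{2p2}. The only cosmetic difference is that the paper peels off the case where one of $p,q$ equals~$2$ first (via \cref{2pq}), whereas you fold that case into \cref{pqr}, which handles even orders by invoking \cref{2pq} internally anyway.
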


\begin{proof}
Note that:
	\begin{itemize}
	\item We may assume $p, q \ge 3$, for otherwise $|G|$ is of the form $2pq$ or $2p^2$ so \cref{2pq} or \cref{2p2} applies.
	\item We may assume $p \neq q$, for otherwise $|G| = 3p^2$, so \cref{3p2} applies. 
	\item We may assume $p,q > 3$, for otherwise $|G|$ is of the form $p^2 q$ with $p \not\equiv 1 \pmod{q}$, so \cref{p2q-easy} applies.
	\end{itemize}
Thus, $|G|$ is the product of three distinct primes, so \cref{pqr} applies.
\end{proof}

\begin{cor} \label{5pq}
 If\/ $|G| = 5pq$, where $p$ and $q$ are distinct primes, then every connected Cayley graph
on~$G$ has a hamiltonian cycle.
 \end{cor}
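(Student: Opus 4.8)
The plan is to reduce to cases already handled, by asking whether the prime~$5$ coincides with $p$ or~$q$. Since $p$ and~$q$ are distinct, at most one of them can equal~$5$, so there are only two situations to consider, and in each the order takes a form for which hamiltonicity is already known. This mirrors the bookkeeping reduction used for \cref{3pq}.

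First I would dispose of the generic case: if neither $p$ nor~$q$ equals~$5$, then $5$, $p$, and~$q$ are three distinct primes, so $|G| = 5pq$ is a product of three distinct primes and \cref{pqr} applies directly.

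It remains to treat the case where one of $p,q$ equals~$5$; let $r$ denote the other prime, so that $r \neq 5$ (because $p \neq q$) and $|G| = 5^2 r$ is of the form $p^2 q$. Here I would split on the value of~$r$. If $r \ge 7$, then $r \nmid 24$, so $25 \not\equiv 1 \pmod{r}$ and \cref{p2q-easy} applies. If $r = 2$, then $|G| = 50 = 2 \cdot 5^2$ is of the form $2p^2$, so \cref{2p2} applies, and if $r = 3$, then $|G| = 75 = 3 \cdot 5^2$ is of the form $3p^2$, so \cref{3p2} applies.

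There is no real obstacle here: the result is a pure reduction to earlier \lcnamecref{pqr}s. The only point requiring a moment's care is the congruence $25 \not\equiv 1 \pmod{r}$, which holds precisely when $r \nmid 24$, i.e.\ for every prime $r \notin \{2,3\}$; this is exactly why the two small primes $r = 2$ and $r = 3$ must be peeled off and handed to the dedicated $2p^2$ and $3p^2$ results rather than to \cref{p2q-easy}.
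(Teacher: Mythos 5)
Your proof is correct and follows essentially the same reduction as the paper's: both arguments peel off the cases where $5$ coincides with $p$ or~$q$ (handing $25r$ to \cref{p2q-easy} for $r\ge 7$ and to \cref{2p2} or \cref{3p2} for $r\in\{2,3\}$) and send the remaining case of three distinct primes to \cref{pqr}. The only difference is cosmetic ordering: the paper first eliminates the primes $2$ and~$3$ via \cref{2pq} and \cref{3pq} before invoking \cref{pqr}, whereas you let \cref{pqr} absorb those cases directly.
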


\begin{proof}
Note that:
	\begin{itemize}
	\item We may assume $p, q \ge 5$, for otherwise $|G|$ is of the form $2pq$ or $2p^2$ or $3pq$ or $3p^2$, so \cref{2pq} or \cref{2p2} or \cref{3pq} or \cref{3p2} applies.
	\item Then we may assume $p,q \neq 5$, for otherwise $|G|$ is of the form $p^3$ or $p^2 q$ with $p^2 \not\equiv 1 \pmod{q}$, so \cref{pk} or \cref{p2q-easy} applies.
	\end{itemize}
Thus, $|G|$ is the product of three distinct primes, so \cref{pqr} applies.
\end{proof}


\section{Groups of order $4pq$} \label{4pqSect}

We start by considering a special case.

\begin{prop} \label{4pq(semiprod)}
If $G = P_2 \ltimes \integer_{pq}$ is a semidirect product of a group~$P_2$ of order~$4$ and a cyclic group\/ $\integer_{pq}$ of order $pq$, where $p$ and~$q$ are distinct odd primes, then every connected Cayley graph on~$G$ has a hamiltonian cycle.
\end{prop}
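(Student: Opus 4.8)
The plan is to let $S$ be a minimal generating set of~$G$, write $A = \integer_{pq}$ for the normal cyclic subgroup, and exploit that $\integer_p$ and $\integer_q$, being characteristic in~$A$, are normal of prime order in~$G$, with $[G,G] \subseteq A$ (since $G/A \cong P_2$ is abelian). The first reduction is via \cref{KeatingWitte}: if $P_2$ centralizes $\integer_p$ or~$\integer_q$, then all nontrivial commutators lie in a single prime-order factor, so $[G,G]$ is cyclic of prime order (or trivial) and we are done. Hence I may assume $P_2$ acts nontrivially on both factors, so $[G,G] = A$. A second, recurring reduction is \fullcref{NormalEasy}{p}: whenever some $s\in S$ lies in $\integer_p$ or~$\integer_q$, then $\langle s\rangle$ is normal of prime order and $\Cay \bigl( G/\langle s\rangle;S \bigr)$ has order $4q$ or~$4p$ (hamiltonian by \cref{4p}), so a hamiltonian cycle lifts.

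If $P_2 \cong \integer_2\times\integer_2$, then each of its three involutions acts as $+1$ or~$-1$ on each factor (the $2$-torsion of the cyclic group $\Aut(\integer_p)$ being $\{\pm1\}$), and exactly one acts trivially on~$\integer_p$ and one trivially on~$\integer_q$. If these coincide, that involution~$c$ is central, $A' = \langle c\rangle\times A$ is abelian of index~$2$, the other two involutions are forced to act as $(-1,-1)$ and hence invert~$A'$, and $|A'|=2pq$ is a product of three primes, so \cref{DihType} applies. Otherwise $G\cong D_{2p}\times D_{2q}$, which I would dispatch by the same analysis of minimal generating sets used in the proof of \cref{D2pxD2p} — splitting on~$|S|$ and on the reflection/rotation pattern of the generators, and invoking \cref{NormalEasy}, \cref{DoubleEdge}, \cref{direct-prod-cor}, and \cref{Stud61}; having $p\neq q$ only simplifies the bookkeeping.

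If $P_2 = \langle f\rangle\cong\integer_4$, then $f^2$ acts as $\pm1$ on each factor. When $f^2$ centralizes~$A$ (equivalently $f$ inverts both factors), $A' = \langle f^2\rangle\times A$ is abelian of index~$2$ and~$f$, of order~$4$, inverts it, so $G$ is of quaternion type; since $pq$ is a product of two primes, \cref{QnHamCyc} applies. There remains the case where $f$ acts with order~$4$ on at least one factor, so $f^2$ is nontrivial on~$A$. After the \cref{NormalEasy} reduction, and conjugating so that $f\in S$ (legitimate because $1$ is not an eigenvalue of~$f$ on~$A$, so $f$ is conjugate, by an element of~$A$, to every element of~$fA$), the undealt-with minimal generating sets have their non-$f$ generators of the form $f^{j}z$ with $z\in A$ having nonzero component in each factor. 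When $j=0$, i.e.\ $S=\{f,x\}$ with $x\in A$, \cref{Stud71} applies with $s_1=f$ and $s_2=x$: a direct computation using that $f$ has no nonzero fixed point on~$A$ gives $(fx)^4=e$, so $|fx|=4 = |G|/|\langle x\rangle|$, while $\langle fx\rangle\cap\langle x\rangle=\{e\}$ and $\Cay \bigl( \langle x\rangle;\{x\} \bigr)$ is a cycle.

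The main obstacle is the remaining order-$4$-action subcases, where a non-$f$ generator carries a nontrivial $\langle f\rangle$-component ($j\in\{1,2\}$). Here I would fall back on \cref{FGL(notnormal)} applied to the cyclic (non-normal) subgroup $H=\langle f\rangle$ of index~$pq$: identifying the right cosets of~$H$ with~$A$, the quotient multigraph $H\backslash\Cay(G;S)$ lives on~$A$, with the $f$-edge given by the automorphism $z\mapsto z^{f}$ and each remaining generator $f^{j}z$ giving an affine map $z_0\mapsto z_0^{f^{j}}z$ of~$A$, and the goal is a hamiltonian cycle of this multigraph whose edge-product generates~$H$, i.e.\ equals~$f^{\pm1}$. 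This forces the net $f$-count to be odd while the accumulated $A$-component cancels exactly, and constructing such a cycle is the delicate step — entirely analogous to the explicit construction carried out in the proof of \cref{3p2}. As there, I expect the key device to be a double edge of the quotient multigraph (a coset where two of the generating edges coincide), which is what lets one adjust the product so that no residual $A$-component survives and the resulting element generates~$H$.
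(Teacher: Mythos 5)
Your opening reductions (Keating--Witte when $P_2$ centralizes a factor, \cref{NormalEasy} for generators in $\integer_p$ or $\integer_q$, quaternion type when $\integer_4$ acts non-faithfully, dihedral type when the two ``trivially acting'' involutions of $\integer_2\times\integer_2$ coincide) all match the paper. But both of the hard cases are left with genuine gaps. First, $D_{2p}\times D_{2q}$ cannot be dispatched by ``the same analysis'' as \cref{D2pxD2p}, and $p\neq q$ does not simplify the bookkeeping --- it breaks it. The proof of \cref{D2pxD2p} leans repeatedly on \cref{Stud71}/\cref{direct-prod-cor} via products such as $(f_2f_1,\,f_1'f')$, which have prime order $p$ equal to the relevant index when both factors are $D_{2p}$; with distinct primes the same product has order $pq$ while the index $|G|/|\langle S-\{s_1\}\rangle|$ is $p$ or $q$, so the hypotheses fail. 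The paper's actual treatment of this subcase is organized differently and requires two external inputs your sketch never anticipates: the Locke--Witte lifting result \cref{VoltageCor} when $S$ meets both $D_{2p}\times\{e\}$ and $\{e\}\times D_{2q}$, and, for the connection set $\{(fx,e),(f,f'),(fx^{2},f'x')\}$ --- where the Factor Group Lemma product $stsu=(x^{k-2},x')$ degenerates at $k\equiv 2$ --- the identification of $\Cay(G;S)$ as a generalized Petersen graph followed by Alspach's classification \cref{GenPet}. That last family is precisely where an elementary case analysis gets stuck, and your proposal gives no way past it.

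Second, in the faithful $\integer_4$ case your plan for generators of the form $f^{j}z$ with $j\in\{1,2\}$ is to build a hamiltonian cycle in the $pq$-vertex multigraph $\langle f\rangle\backslash\Cay(G;S)$ whose edge-product is $f^{\pm1}$, ``entirely analogous to'' the construction in \cref{3p2}. That construction is the whole content of the case, and you do not carry it out, establish the double edges you say you need, or verify the odd-$f$-count constraint; as written this is a plan, not a proof. It is also the harder route: since $A=\integer_{pq}$ is a cyclic \emph{normal} subgroup with $G/A\cong\integer_4$, one only needs a $4$-edge hamiltonian cycle in $\Cay(G/A;S)$ whose product generates $A$, and the paper writes these down directly --- $(f^{3},s)$ with product $x^{j}y^{k}$ when $s=fx^{j}y^{k}$, $(f,s,f^{-1},s^{-1})$ with product a generating commutator when $s=f^{2}x^{j}y^{k}$, and similar short cycles when $S=\{f,f^{i}x,f^{j}y\}$ --- then applies \cref{FGL}. (A minor inconsistency: your ``$j=0$'' subcase $S=\{f,x\}$ with $x\in A$ is already excluded by your own \cref{NormalEasy} reduction, since $Z(G)\cap A=\{e\}$ when $f^{2}$ is nontrivial on~$A$.)
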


\begin{proof}
Let $S$ be a minimal generating set of~$G$.
We may assume $[G,G] = \integer_{pq}$ (otherwise \cref{KeatingWitte}
applies).

\setcounter{case}{0}

\begin{case}
 Assume ${P_2 \iso \integer_4}$, so ${G \iso \integer_4 \ltimes
\integer_{pq}}$.
 \end{case}
 We can view $G$ as $\integer_4 \ltimes (\integer_p \times
\integer_q)$. Let $f$ be a
generator of $\integer_4$ and let $x$ and $y$ be generators of
$\integer_p$ and
$\integer_q$, respectively. There exists $r \in \integer$, such that $x^f = x^r$ and $y^f = y^r$. We have $r^4 \equiv 1 \pmod{pq}$, because $|f| = 4$. 

Note that in view of \cref{NormalEasy} and
the fact that $Z(G) \cap (\integer_{p} \times \integer_q) =
\{e\}$, we can assume that no
element of the form $x^j y^k$ is in $S$.

Since $S$ clearly contains at least one element of the form $f^i x^j y^k$, where
$i \in \{1,3\}$, we can assume $f \in S$.

\begin{subcase}
 Assume the action of\/ ${\integer_4}$ on ${\integer_{pq}}$ is \textbf{not} faithful.
 \end{subcase}
 Then $r^2
\equiv 1 \pmod{pq}$. Since $[G,G] = \integer_{pq}$, this implies $r \equiv
-1
\pmod{pq}$ and thus $f$ inverts every element of $\integer_p \times
\integer_q$. But then $G \iso Q_{4pq}$ is of quaternion type, so \cref{QnHamCyc} applies.

\begin{subcase}
Assume the action of\/ ${\integer_4}$ on ${\integer_{pq}}$ is faithful.
 \end{subcase}
 This means $r^2 \not\equiv 1 \pmod{pq}$, so we may assume $r^2 \not\equiv 1 \pmod{p}$ (by interchanging $p$ and~$q$ if necessary). Therefore $r^2 \equiv -1 \pmod{p}$.
 
 \begin{subsubcase}
 Assume $S$ contains an element~$s$ of the form $fx^jy^k$ or $f^2x^jy^k$, where both $j$ and $k$ are nonzero. 
 \end{subsubcase}
Now if $s = fx^jy^k$, then clearly $f^3s = x^jy^k$ is of
order $pq$ and thus  \cref{FGL} tells us that $\bigl( f^3 ,s \bigr)^{pq}$ is a hamiltonian cycle in
$\Cay(G;S)$.
On the other hand, if $s = f^2x^jy^k$, then
	$$f^{-1} s^{-1} f s = f ^{-1} (x^j y^k)^{-1} f (x^j y^k) = [f, x^j y^k] $$
generates $[G,G] = \integer_{pq}$. Thus, it is easy to see from \cref{FGL} that  $(f,s,f^{-1},s^{-1})^{pq}$ is a hamiltonian cycle in
$\Cay(G;S)$.

 \begin{subsubcase}
 Assume all elements of $S - \{f\}$ are of the form $f^ix^jy^k$ where $i \in \{1,2\}$ and
precisely one of $j,k$ is nonzero. 
\end{subsubcase}
Clearly, $S = \{f, f^i x, f^j y\}$ (perhaps after replacing $x$ and~$y$ by their powers).
 Since $(fx)^{-1} fy = x^{-1}y$, we see
that $\langle fx, fy \rangle = G$. Similarly, 
$[fx , f^2 y] = x^{-2} [f,y]$ generates $[G,G]$,
and thus $\langle fx, f^2y \rangle = G$. 
Since $S$ is minimal, there
are
thus only two possibilities for~$S$. First if $s_1 = f^2x$, $s_2 =
fy \in S$, then $s_1 f^{-1} s_1 s_2 = x^{r-1}y$ is of order $pq$
and thus \cref{FGL} tells us $(s_1,f^{-1},s_1,s_2)^{pq}$ is a hamiltonian cycle in $\Cay(G ;
S)$. Finally, if $s_1 = f^2x$ and $s_2 = f^2y$, then $fs_1f^{-1}s_2 =
x^r y$ is of order $pq$, so  \cref{FGL} tells us $(f,s_1,f^{-1},s_2)^{pq}$ is a hamiltonian
cycle in $\Cay(G ; S)$.

\begin{case}
 Assume ${P_2 \iso \integer_2 \times \integer_2}$, so ${G \iso (\integer_2
\times \integer_2) \ltimes \integer_{pq}}$.
 \end{case}

\begin{subcase}
 Assume some involution in ${\integer_2 \times \integer_2}$ centralizes
${\integer_{pq}}$. {\upshape(}That is, ${\integer_2 \times \integer_2}$ is not
faithful
on ${\integer_{pq}}$.{\upshape)}
 \end{subcase}
 Then $G \iso \integer_2 \ltimes \integer_{2pq} \iso D_{4pq}$ is
dihedral,
so \cref{dihedral} applies.

\begin{subcase}
 Assume no involution in ${\integer_2 \times \integer_2}$ centralizes
${\integer_{pq}}$. {\upshape(}That is, ${\integer_2 \times \integer_2}$ is faithful
on ${\integer_{pq}}$.{\upshape)}
 \end{subcase}
 Then $G \iso D_{2p} \times D_{2q}$.

If $S$ contains an element~$s$ of $\integer_{pq}$, then $\langle s \rangle \normal G$. 
Also, since $[G,G] = \integer_p \times \integer_q$, we see that $Z(G)
\cap(\integer_p \times \integer_q) = \{e\}$. Thus \cref{NormalEasy} applies.

Henceforth, we assume $S \cap \integer_{pq} = \emptyset$.
 Also, we consider $G$ to be $D_{2p} \times D_{2q}$.

Let us consider the possibility that $fx' \in S$, where $f$ is a reflection in~$D_{2p}$, and $x'$ is a nontrivial rotation in~$D_{2q}$ (so $x'$ generates~$\integer_q$). We may assume that
$S
- \{fx'\}$ generates $G/\integer_q$ (otherwise,
\cref{DoubleEdge} applies with $N = \integer_q$). Furthermore, it is easy to see that the only proper subgroups
of~$G$ that properly contain $fx'$ are $D_{2p} \times \integer_{q}$ and
$\langle
f \rangle \times D_{2q}$. It is therefore not difficult to see that (up to isomorphism) the
only possible Cayley graph is:
	$$ \text{$\Cay \bigl( D_{2p} \times D_{2q}; \{fx', ff', fx \} \bigr)$ where $f,x \in D_{2p}$ and $f',x' \in D_{2q}$} .$$
Note that $(fx, fx')$ is obviously a hamiltonian cycle in 
	$$ \Cay \left( \frac{\langle f, x, x' \rangle }{ \langle x, x' \rangle} ; \{ fx, fx' \} \right) \iso \Cay \bigl( \langle f \rangle; \{f\} \bigr) ,$$
 so, since $(fx) (fx') = x^{-1}x'$ generates $\langle x, x' \rangle$, \cref{FGL} implies that 
	$$  \bigl( ( fx,fx' )^{pq}\# \bigr) $$
is a hamiltonian path in $\Cay \bigl( \langle f, x, x' \rangle ; \{fx, fx'\} \bigr)$. Therefore, all of the vertices of this path are in different right cosets of $\langle f' x' \rangle$. So \cref{FGL(notnormal)} tells us that
	$$ \bigl( ( fx,fx' )^{pq}\#, ff' \bigr)^2 $$ 
is a hamiltonian cycle in $\Cay(G;S)$.

 We may now assume there is no double edge in $\Cay(G/\integer_p; S)$ or
$\Cay(G/\integer_q;S)$. (That is, if $s \in S$, and $s$ represents an element of order~$2$
in $G/\integer_p$ or  $G/\integer_q$, then $s$ has order~$2$ in~$G$.)
 This implies that $S$ consists entirely of
elements of order~$2$.

If $S$ has four (or more) elements, it is clear from the minimality of~$S$ that $S$ has the form $S = \{ f, fx, f', f'x' \}$ (with $\langle f,x \rangle = D_{2p} \times \{e\}$ and $\langle f',x' \rangle = \{e\} \times D_{2q}$. This means that $\Cay(G;S)$ is isomorphic to the Cartesian product $C_{2p} \times C_{2q}$ of two cycles, which obviously has a hamiltonian cycle \cite[Cor.\ on p.~29]{ChenQuimpo-hamconn}.

We now assume that $S = \{s,t,u\}$ consists of precisely three elements of order~$2$.

\begin{subsubcase}
 Assume ${S \cap D_{2p} = \emptyset}$ and ${S \cap D_{2q} = \emptyset}$.
 \end{subsubcase}
 Then $s \equiv t \equiv u \pmod{\integer_{pq}}$. This is impossible,
because $G/\integer_{pq} \iso \integer_2 \times \integer_2$ is not
cyclic.

\begin{subsubcase}
 Assume ${s \in S \cap D_{2p}}$ and ${t \in S \cap D_{2q}}$.
 \end{subsubcase}
 Then $s \in Z(G/\integer_p)$ and $G/\integer_p \iso D_{4q}$. Since
$\{ t,u \}$ generates $G/\integer_p$, and $\Cay \bigl( G/\integer_p; \{t,u\} \bigr)$ is a $4q$-cycle,
we see that $\Cay(G/\integer_p;S)$
is isomorphic to $\Cay \bigl( \integer_{4q}; \{1,2q\} \bigr)$. So \cref{VoltageCor} implies that some
hamiltonian cycle in $\Cay(G/\integer_p;S)$ lifts to a hamiltonian cycle
in $\Cay(G;S)$.

\begin{subsubcase}
 Assume ${s \in S \cap D_{2p}}$ and ${S \cap D_{2q} = \emptyset}$.
 \end{subsubcase}
Each element of $G = D_{2p} \times D_{2q}$ is an ordered pair. 
Also, since $\{t,u\}$ generates $G/D_{2p}$, we know that neither $t$ nor~$u$ belongs to $D_{2p}$ (and, by assumption, they do not belong to $D_{2q}$), so
$t \equiv u \pmod{\integer_{pq}}$. Therefore, we may
write 
	$$ \text{$t = (f,f')$, $s = (f x, e)$, $u = (f x^k, f' x')$,}$$
where $f$ and $f'$ are reflections, $x$ and $x'$ are nontrivial rotations,
and $k \in \integer$.

%

\begin{subsubsubcase}
Assume $k \not\equiv 2 \pmod{p}$.
\end{subsubsubcase}
Since $(s,t,s,u)$ is a hamiltonian cycle in $\Cay \bigl( G/(\integer_p \times \integer_q); S \bigr)$, and
	$$ stsu
	= (fx \cdot f \cdot fx \cdot fx^k, \ e \cdot f' \cdot e \cdot f' x' )
	= \bigl( x^{k - 2}, x' \bigr) $$
generates $\integer_p \times \integer_q$ (because $k \not\equiv 2 \pmod{p}$), \cref{FGL} implies that $(s,t,s,u)^{pq}$ is a hamiltonian cycle in $\Cay(G;S)$.

\begin{subsubsubcase}
Assume $k\equiv 2 \pmod{p}$.
\end{subsubsubcase}
We claim that $\Cay(G;S)$ is a generalized Petersen graph.
To see this, begin by letting 
	$$ \text{$x_{2i} = (ut)^i$ and $x_{2i+1} = (ut)^i u$ for $0 \le i < pq$}$$
and
	$$ \text{$y_j = sx_j$ for $0 \le j < 2pq$} .$$
Then every vertex of $\Cay \bigl( G; \{t,u\} \bigr)$ is in the union of the two disjoint $2pq$-cycles 
	$$ \text{$(x_0,x_1,x_2,\ldots,x_{2pq-1}, x_{2pq})$ and $(y_0,y_1,y_2,\ldots,y_{2pq-1}, y_{2pq})$.} $$
Now, write $(x^2,e) = (ut)^r$ with $1 \le r < pq$. Then
	$$ \text{$sts = (fx^2, f') =  t(ut)^r$ and $sus = (fx^{2-k}, f'x')  = (f, f'x') = (ut)^r u$} $$
so, by induction on~$i$, we see that 
	$$x_i s = s \, x_{(2r+1)i} = y_{(2r+1)i} ,$$
which means there is an $s$-edge from $x_i$ to $y_{(2r+1)i}$. Therefore, $\Cay \bigl(G; \{s,t,u\} \bigr)$ is a generalized Petersen graph, as claimed.

Now, if we let $n = 2pq$, then: 
	\begin{itemize}
	\item The number of vertices of $\Cay(G;S)$ is $2n$.
	\item Since $pq$ is odd, we know that $n = 2pq \not\equiv 0 \pmod{4}$.
	\item Since $2pq$ is even, we know $n = 2pq \not\equiv 5 \pmod{6}$.
	\end{itemize}
Therefore \cref{GenPet} tells us that $\Cay(G;S)$ has a hamiltonian cycle.
\end{proof}

\begin{prop} \label{4pq}
If\/ $|G| = 4pq$, where $p$ and~$q$ are prime, then every connected Cayley graph on~$G$ has a hamiltonian cycle.
\end{prop}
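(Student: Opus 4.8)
The plan is to reduce the general case $|G| = 4pq$ to cases already handled, principally \cref{4pq(semiprod)}, by using the structure theory of the Sylow subgroups together with the Factor Group Lemma machinery. First I would let $S$ be a minimal generating set, assume $p,q$ are distinct odd primes (otherwise $|G|$ is of the form $8p$, $4p^2$, or $16$, handled by \cref{8p}, \cref{4p2}, or \cref{pk}), and assume $p < q$. As usual, I would assume $[G,G]$ is not a cyclic $p$-group, for otherwise \cref{KeatingWitte} applies. The first real step is to establish that a Sylow $q$-subgroup $Q \iso \integer_q$ is normal: since $q$ is the largest prime factor and $4p < q^2$ in the relevant range (or via a counting/transfer argument), $Q \normal G$. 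Here the key structural input is that $G/Q$ has order~$4p$, which is already covered by \cref{4p} when we pass to quotients inductively.

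The heart of the argument is to show that in fact both Sylow subgroups for the odd primes can be taken normal, so that $\integer_{pq} \iso \integer_p \times \integer_q$ is a normal (cyclic) Hall subgroup and $G = P_2 \ltimes \integer_{pq}$ is exactly the semidirect product treated in \cref{4pq(semiprod)}. To get this, I would argue that once $Q \normal G$, the quotient $G/Q$ has order $4p$ with a Sylow $p$-subgroup that is either normal or gives $G/Q \iso A_4$-type behavior; pulling back, the Sylow $p$-subgroup $P_p$ of $G$ normalizes $Q$, and I would use \cref{Hall-solv} (noting $G$ is solvable, being of order $4pq$ with all proper composition factors cyclic) to produce a normal cyclic Hall $\{p,q\}$-subgroup isomorphic to $\integer_{pq}$. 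The Schur--Zassenhaus splitting then exhibits $G$ as $P_2 \ltimes \integer_{pq}$ with $|P_2| = 4$, so \cref{4pq(semiprod)} finishes the proof directly.

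The main obstacle I anticipate is the case where the Sylow $p$-subgroup is \emph{not} normal, which can occur when $p \mid (q-1)$ arrangements fail and instead the factor of order~$4$ acts to produce an $A_4$-like or $S_4$-like quotient. Concretely, if $P_p \notnormal G$ then a counting argument via Sylow's Theorem \pref{NoMod1->Normal} forces small values ($p = 3$, with the number of Sylow $3$-subgroups equal to~$4$), and the translation action on the four Sylow subgroups yields a homomorphism $G \to S_4$; I would then either identify $G$ with a group whose Cayley graphs are already known to be hamiltonian (for instance via \cref{S4}, \cref{A4xZp}, or \cref{Z3kx(Z2xZ2)} after quotienting by the normal $\integer_q$), or extract a normal subgroup through which \cref{DoubleEdge} or \cref{FGL} lifts a hamiltonian cycle from a smaller quotient. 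Handling this exceptional branch cleanly, and confirming that the semidirect-product reduction covers everything else, is where the care is needed; the generic branch should be an immediate appeal to \cref{4pq(semiprod)}.
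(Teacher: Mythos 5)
Your overall strategy --- reduce to \cref{4pq(semiprod)} via a normal cyclic Hall subgroup of order $pq$, and isolate an exceptional $A_4$-type branch --- is the same as the paper's, but there are two genuine gaps. First, you never deal with $A_5$, and $|A_5| = 60 = 4\cdot 3\cdot 5$ is of the form $4pq$. Since $A_5$ is simple, your claim that the Sylow subgroup for the largest prime is automatically normal is false ($A_5$ has six Sylow $5$-subgroups; $6 \equiv 1 \pmod{5}$ and $6$ divides $12$, so the counting argument does not close), and your parenthetical assertion that $G$ is solvable ``with all proper composition factors cyclic'' is likewise false for $A_5$. The paper must first set $G \not\iso A_5$ aside (this case is \cref{A5}, itself a separate multi-page verification) before Burnside's normal $p$-complement theorem yields solvability; only then do Hall subgroups and the index bound $|G:N_G(\integer_p)| \le 4 < p+1$ (for the \emph{larger} odd prime) give a normal Sylow subgroup.

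Second, the exceptional branch is not dispatched by the lemmas you cite. When the Sylow $3$-subgroup is not normal one gets $G \iso A_4 \ltimes \integer_p$; if $\integer_3$ acts nontrivially on $\integer_p$, this group is not isomorphic to $S_4$, to $A_4 \times \integer_p$, or to $\integer_{3^k}\ltimes(\integer_2\times\integer_2)$ (the orders do not even match), and it has no normal cyclic subgroup of order $3p$, so \cref{4pq(semiprod)} cannot be applied to it either. Lifting a hamiltonian cycle from $\Cay(G/\integer_p;S) \iso \Cay(A_4;S)$ through \cref{FGL} requires verifying that a suitable product of edge labels generates $\integer_p$, and this is exactly where the paper spends most of its effort: it runs through the possible minimal generating sets, checks the hypotheses of \cref{Stud61} by computing commutators such as $[f,fxz] = yz^{1-r}$ of order $2p$, and applies \cref{DoubleEdge} or \cref{NormalEasy} in the remaining configurations. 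Your proposal names the right tools but supplies none of these verifications, and since this is the hardest part of the proposition, the proof is incomplete as it stands.
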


\begin{proof}
Let $S$ be a minimal generating set of~$G$.
We may assume $p$ and~$q$ are distinct (for otherwise $|G| = 4p^2$, so \cref{4p2} applies).
Furthermore, we may assume $p,q \ge 3$, for otherwise $|G|$ is of the form $8p$, so \cref{8p} applies. 
We also assume $G \not\iso A_5$ (since Cayley graphs on that group are covered in \cref{A5}). It then follows easily from Burnside's Theorem on normal $p$-complements that $G$ is solvable.

Let $P_2$ be a Sylow $2$-subgroup of~$G$. Because $G$ is solvable,  \cref{Hall-solv} tells us there are Hall subgroups~$H_{pq}$ and $H_{4q}$ of order~$pq$ and $4q$, respectively. 

There is no harm in assuming that $p > q$ (so $p \ge 5$). This implies the Sylow $p$-subgroup $\integer_p$ is normal in $H_{pq}$. 
So
 $$ |G : N_G(\integer_p)| \le |G : H_{pq}| = 4 < p + 1 .$$
Therefore $\integer_p \normal G$. So $G = H_{4q} \ltimes \integer_p$.

\setcounter{case}{0}
\begin{case}
Assume ${H_{4q}}$ has a normal Sylow $q$-subgroup.
\end{case}
We may assume
$H_{4q}$ is not abelian (otherwise \cref{KeatingWitte} applies). This implies
the commutator subgroup of $H_{4q}$ must be a Sylow
$q$-subgroup~$\integer_q$. Because $\Aut(\integer_p)$ is abelian, this
implies that $\integer_q$ centralizes~$\integer_p$. So $G = P_2 \ltimes
(\integer_p \times \integer_q) \iso P_2 \ltimes \integer_{pq}$. Therefore \cref{4pq(semiprod)} applies.

\begin{case}
Assume the Sylow $q$-subgroups of $H_{4q}$ are not normal.
\end{case}
Since $H_{4q}$ is of order~$4q$, Sylow's Theorem \pref{NoMod1->Normal} tells us there is a divisor of~$4$ that is congruent to~$1$ modulo~$q$. Clearly, we must have $q = 3$. Thus, $H_{4q}$ is a group of order $4 \cdot 3 = 12$, in which the Sylow $3$-subgroups are not normal. The only such group is~$A_4$. So $G \iso A_4 \ltimes \integer_p$.

 We have $G \iso \integer_3 \ltimes \bigl( (\integer_2 \times \integer_2)
\times \integer_p \bigr)$.
 We may assume $S \cap \integer_p = \emptyset$, for otherwise
\cref{NormalEasy} applies.

 We let $f$ be a generator of
$\integer_3$, $x$ and $y$ be generators of $\integer_2 \times
\integer_2$ and $z$ be a generator of $\integer_p$ where $x^f = xy$,
$y^f = x$ and $z^f = z^r$ for some $r \in \integer$ such that $r^3
\equiv 1\pmod{p}$.

\begin{subcase}
 Assume ${\integer_3}$ acts nontrivially on ${\integer_p}$.
 \end{subcase}
 Since $S$ must contain an element of $G - \langle x,y,z\rangle$, and all of these elements have order~$3$,  we can assume $f \in S$.

 \begin{subsubcase}
 Assume ${|S| = 2}$.
 \end{subsubcase}
 Let $S = \{f, s\}$.

\begin{subsubsubcase}
Assume ${s \in (\integer_2 \times \integer_2) \times \integer_p}$.
\end{subsubsubcase}
The generator $s$ gives a double edge in $G/\integer_p$ (and $\{f,s\}$ is a
minimal generating set of $G/ \integer_p$), so \cref{DoubleEdge}
applies.

 \begin{subsubsubcase}
Assume ${s \in f \bigl( (\integer_2 \times \integer_2) \times \integer_p
\bigr)}$.
\end{subsubsubcase}
 Write $s = fx^iy^jz^k$. 
 Since $S$ generates~$G$, we cannot have $i = j = 0$ or $k = 0$,
 so we can assume $s = fxz$. 

We show that $f$ and $s$ satisfy the conditions of
\cref{Stud61}. We have 
	$$ [f,s] = f^{-1}(fxz)^{-1} f (fxz) = f^{-1} z^{-1} x^{-1} f^{-1} f^2 xz = y z^{1-r} .$$
Since $f$ acts nontrivially on~$\integer_p$, we know $r \not\equiv 1 \pmod{p}$, so
$|[f,s]| = 2p$.  Also, we have
	$$ s = fxz \notin  \langle f \rangle \langle [f,s] \rangle .$$
The other two conditions are clearly satisfied.

\begin{subsubcase}
 Assume ${|S| = 3}$.
 \end{subsubcase}
 We may assume $S = \{f, f^i x, f^j z\}$, and $i,j \in \{0,1\}$.
 Since $S \cap \integer_p = \emptyset$, we have
 $j = 1$, for otherwise \cref{NormalEasy} applies.
But then $\{f, f z\}$ gives a double edge in $\Cay(G/\integer_p;S)$
(and $S - \{f z\}$ is a minimal generating set of
$G/\integer_p$), so \cref{DoubleEdge} applies.

\begin{subcase}
 Assume ${\integer_3}$ centralizes ${\integer_p}$.
 \end{subcase}
 This means $r = 1$, and we have $G \iso A_4 \times \integer_p$.
 Recalling that $S \cap \integer_p = \emptyset$, we know that no element of~$S$ has order~$p$.

Let $s$ be an element of~$S$ whose order is divisible by~$p$. 
Note that $\langle s \rangle$ contains a nontrivial subgroup
of $G/\integer_p \iso \integer_3 \ltimes (\integer_2 \times \integer_2)$. Either this subgroup is maximal (of
order~$3$) or we have $\langle s, t \rangle = G$ for any $t \in S$ with
$3 \mid |t|$. Therefore $|S| = 2$, so we may write $S = \{s,t\}$.

We may assume $s \notin (\integer_2 \times \integer_2) \times \integer_p$, for otherwise $\{s, s^{-1}\}$ gives a double edge in $\Cay(G/\integer_p;S)$, so \cref{DoubleEdge} applies. Therefore, we may assume $s = fz$.

We  show that $fz$ and $t$ satisfy the conditions of \cref{Stud61}. 
Since $\langle fz,t \rangle = G$, we may assume $t = f^\ell x z^k$ for some $\ell,k \in \integer$.
Then 
	$$ [fz,t] = [fz,f^\ell x z^k] = y .$$
Since all nonidentity elements of $\langle fz \rangle$ are in $\langle f \rangle \langle z \rangle$, 
we see that $t \notin \langle fz \rangle \langle
[s,t] \rangle$, and the remaining conditions are also clearly satisfied.
 \end{proof}


\section{Groups of order $2p^3$} \label{2p3Sect}

\begin{prop}  \label{2p3}
 If\/ $|G| = 2 p^3$, where $p$ is prime, then every connected Cayley graph on~$G$
has
a hamiltonian cycle.
 \end{prop}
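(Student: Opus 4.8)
The plan is to reduce, via the paper's standing inductive hypothesis, to lifting a hamiltonian cycle from a quotient of order~$2p^2$ (hamiltonian by \cref{2p2}). First I would dispose of $p = 2$, since then $|G| = 16$ is a prime power and \cref{pk} applies. So assume $p$ is odd. Then the Sylow $p$-subgroup~$P$ has index~$2$, hence is normal, and (as $\gcd(2,p) = 1$) $G = \langle f \rangle \ltimes P$ for some involution~$f \notin P$. Since $G/P \iso \integer_2$ is abelian, we have $[G,G] \subseteq P$; so we may assume $[G,G]$ is not a cyclic $p$-group, for otherwise \cref{KeatingWitte} applies.

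Next I would split into cases according to the structure of~$P$ and the action of~$f$. Because $P$ is the unique subgroup of order~$p^3$, it is the only candidate for an abelian subgroup of index~$2$, so $G$ is of dihedral type exactly when $P$ is abelian and $f$ inverts every element of~$P$; in that case $|P| = p^3$ is a product of three primes, so \cref{DihType} applies. Decomposing the action of~$f$ on an abelian~$P$ into its $(\pm 1)$-eigenspaces (possible since $p$ is odd, so $2$ is invertible), a short computation shows that $[G,G]$ equals the $(-1)$-eigenspace. Hence the only abelian possibility not already covered is $P \iso \integer_p \times \integer_p \times \integer_p$ with $f$ inverting a two-dimensional subspace~$V^-$ and centralizing a complementary line~$V^+$; every other abelian action forces $[G,G]$ to be cyclic or forces full inversion. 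Here $V^+ \subseteq Z(G)$ and in fact $G \iso \bigl( \integer_2 \ltimes (\integer_p \times \integer_p) \bigr) \times \integer_p$. The remaining possibility is that $P$ is one of the two nonabelian groups of order~$p^3$, in which case $[P,P] = Z(P) \iso \integer_p$ is normal in~$G$.

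In every remaining case $G$ has a normal subgroup $N \iso \integer_p$: take $N = V^+ = Z(G)$ in the abelian case, and $N = [P,P]$ in the nonabelian case. Then $|G/N| = 2p^2$, so $\Cay(G/N;S)$ has a hamiltonian cycle by \cref{2p2}, and the strategy is to promote this to a hamiltonian cycle of $\Cay(G;S)$ via the Factor Group Lemma~\pref{FGL} and its consequences. Concretely, for a minimal generating set~$S$ I would seek the mechanism forcing the edge-product of some hamiltonian cycle of the quotient to generate~$N$: if $N \subseteq Z(G)$ and some $s \in S$ generates~$N$, apply \fullcref{NormalEasy}{Z}; if some generator~$s$ satisfies $s^2 \in N \setminus \{e\}$ (so that $s \equiv s^{-1} \pmod N$), then \cref{DoubleEdge} or \cref{MultiDouble} applies; and when the quotient Cayley graph is a prism or M\"obius ladder, \cref{VoltageCor} applies. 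Each admissible minimal generating set would then be checked against these criteria.

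The main obstacle is the abelian case $G \iso \bigl(\integer_2 \ltimes (\integer_p \times \integer_p)\bigr) \times \integer_p$, where $G$ is provably \emph{not} of dihedral type and the clean lifting can break down: if the only generator contributing to the central factor~$V^+$ is a ``rotation'' $z^c x^a y^b$ with $c \neq 0$ and $(a,b) \neq (0,0)$, then no generator squares into $N = V^+$ and none lies in~$N$, so \cref{NormalEasy,DoubleEdge} do not immediately apply to this~$N$. The remedy I expect to need is to switch to a different $f$-invariant line for~$N$ (every line of~$V^-$ is normal, as $f$ inverts it), to exploit the direct-product structure together with \cref{direct-prod-cor} or \cref{Stud61}, or, failing that, to write down explicit hamiltonian cycles and verify them through \cref{FGL(notnormal)}. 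The nonabelian cases should be more routine once the quotient modulo $[P,P]$ is taken, but they still require confirming the lift for each minimal generating set that can arise.
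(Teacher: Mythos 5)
Your reduction steps agree with the paper's: dispose of $p=2$ via \cref{pk}, note $P\normal G$ and $G=\integer_2\ltimes P$, kill the cyclic-commutator cases with \cref{KeatingWitte}, the full-inversion cases with \cref{DihType}, and correctly isolate $G\iso\bigl(\integer_2\ltimes(\integer_p\times\integer_p)\bigr)\times\integer_p$ as the one surviving abelian configuration. (You also miss a small but useful opening move: if $S\subseteq fP$ then all generators are congruent modulo the normal $p$-subgroup $P$ and \cref{pkSubgrp} applies outright, so one may assume $S\cap P\neq\emptyset$; the paper organizes its entire case division around such an element $s\in S\cap P$ and whether $\langle s\rangle$ is normal. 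And you overlook a genuine shortcut: when $P$ is nonabelian of exponent $p^2$, one can show $f$ must centralize $y$ and hence $[G,G]\subseteq\langle x\rangle$ is cyclic, so \cref{KeatingWitte} finishes that case with no lifting at all.)

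The genuine gap is that the core of the proof is deferred rather than carried out. Your strategy is to quotient by a normal $N\iso\integer_p$ and hope one of \cref{NormalEasy}, \cref{DoubleEdge}, \cref{MultiDouble}, or \cref{VoltageCor} forces the lift, and you concede that ``each admissible minimal generating set would then be checked'' and that otherwise one must ``write down explicit hamiltonian cycles.'' But there really are minimal generating sets where none of those lemmas applies to any normal $\integer_p$, and writing down and verifying explicit cycles for them is most of the work. For example, with $P$ nonabelian of exponent~$p$ and $S=\{f,xy\}$ where $f$ is an involution inverting $x$ and $z=[x,y]$ but centralizing $y$: no generator lies in or squares into $[P,P]$, the quotient is not a prism or M\"obius ladder, and the paper must exhibit the cycle $\bigl((s^{p-1},f)^{2p-2},(s^{-(p-1)},f)^2\bigr)^p$ and verify vertex-by-vertex that it covers $P\cup fP$. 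Likewise, in your ``main obstacle'' abelian case the paper does not switch to another normal line; it applies \cref{FGL(notnormal)} to \emph{non-normal} cyclic subgroups of composite order such as $\langle t^{-1}f\rangle$ of order~$2p$, and in other branches it uses \cref{FGL(skewgen)} followed by \cref{MultiDouble} on a quotient multigraph — techniques outside the normal-$\integer_p$ framework your plan commits to. Until those explicit constructions are supplied, the proposal is a correct roadmap of the easy reductions but not a proof.
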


\begin{proof}
Let $S$ be a minimal generating set of~$G$.

 We may assume $p \ge 3$. (Otherwise, $|G| = 2^4$ is a prime power, so \cref{pk} applies.)  Let $P$ be a Sylow $p$-subgroup of~$G$, so $G =
\integer_2 \ltimes P$, and let $f$ be a generator of~$\integer_2$.

We may assume $S \not\subset fP$ (for otherwise \cref{pkSubgrp}
applies). Thus, there exists $s \in S \cap P$.

\setcounter{case}{0}

\begin{case} \label{2p3-sNormal}
Assume $\langle s \rangle \normal G$. 
\end{case}
Note that if $|s| = p$, then \cref{NormalEasy} applies. Also, if $|s| = p^3$, then $\langle s \rangle = P \supset [G,G]$, so $[G,G]$ is cyclic of $p$-power order, so \cref{KeatingWitte} applies.
Thus, we may assume $|s| = p^2$. 

Also, we may assume $\langle s \rangle \cap Z(G)$ is nontrivial (else \cref{NormalEasy} applies), so it is clear that $f$ does not invert $\langle s \rangle$. Since $|f| = 2$, we conclude that $f$ centralizes~$s$. Since we may assume that $|[G,G]| \neq p$ (else \cref{KeatingWitte} applies), this implies that we may assume $P$ is nonabelian.

Now, for any $x \in P$, we have $\langle [x,s] \rangle \subset \langle s \rangle$ (because $\langle s \rangle \normal G$), so $f$ centralizes $[s,x]$. Therefore
	$$[x^f, s] = [x^f, s^f] = [x,s]^f = [x,s] ,$$
so $f$ centralizes~$x$, modulo $C_P(s) = \langle s \rangle$. Thus, $f$ centralizes both $P/\langle s \rangle$ and $\langle s \rangle$. Since $|f| = 2$ is relatively prime to~$|P|$, this implies that $f$ centralizes~$P$ (see \cite[Thm.~5.3.2]{Gorenstein-FinGrps}). Therefore $G = \integer_2 \times P$, so $[G,G] = [P,P]$ is cyclic of order~$p$, so \cref{KeatingWitte} applies.

\begin{case}
Assume $\langle s \rangle \notnormal G$. 
\end{case}
 There is an element~$a$ of~$S$ with $|a|$ even. 

\begin{subcase}
 Assume ${G = \integer_2 \times P}$.
 \end{subcase}
 Then $[G,G]$ is cyclic of order $1$ or~$p$, so \cref{KeatingWitte}
applies.

\begin{subcase}
 Assume ${P}$ is abelian {\upshape(}but ${G}$ is nonabelian{\upshape)}.
 \end{subcase}
 We may assume $G$ is not of dihedral type (else \cref{DihType}
applies). So $|[G,G]| \le p^2$. We may also assume $[G,G]$ is not cyclic (for
otherwise \cref{KeatingWitte} applies). Therefore $P =
(\integer_p)^3$, and 
	$$G = \bigl( \integer_2 \ltimes (\integer_p \times
\integer_p) \bigr) \times \integer_p
= \left\langle f, x, y, z 
\mathrel{\Bigg|}
\begin{matrix}
f^2 = x^p = y^p = z^p = e, \\
 x^f = x^{-1}, \ y^f = y^{-1}, \ z^f = z, \\
  \text{$\langle x, y, z\rangle$ is abelian}
 \end{matrix} \right\rangle
$$
is the direct product of a group of
dihedral type with a cyclic group of order~$p$.
Also note that, because $a^2$
is in the elementary abelian group~$P$, we have $|a^2| \in \{1,p\}$.

Since any two elements of order~$2$ always generate a dihedral group, it is easy to see that $G/Z(G)$ has no $2$-element generating set. Therefore $|S| \ge 3$.

\begin{subsubcase} \label{2p3-a2note}
Assume $a^2 \neq e$. 
\end{subsubcase}
We know $a^2$ is in $Z(G)$ (because it is centralized by both~$a$ and the abelian group~$P$), so we conclude that $\langle a^2 \rangle = Z(G)$ is normal in~$G$. 
Also, since $\langle a \rangle$ has index $p^2$ in~$G$, we know that $|S| \le 3$.
Then, since $G/Z(G)$ has no $2$-element generating set, we conclude that $S$ is a minimal generating set of $G/\langle a^2\rangle$. Thus, \cref{DoubleEdge} (with $N = \langle a^2 \rangle$ and $s = a = t^{-1}$) provides a hamiltonian cycle in $\Cay(G;S)$.

\begin{subsubcase}
Assume $a^2 = e$. 
\end{subsubcase}
We may assume $f \in S$. Since $\langle s \rangle \notnormal G$, we must have $|\langle f , s \rangle| > 2p$. Therefore, the minimality of~$S$ implies $|S| \le 3$. Since we already have the opposite inequality, we conclude that $|S| = 3$; write $S = \{f, s, t\}$.
 \begin{itemize}
 \item Suppose $t \notin P$. Then we may assume $t^2 = e$ (otherwise \cref{2p3-a2note} applies), so $ft$ is inverted by~$f$,
so it generates a normal subgroup of~$G$. 
Since $f \equiv t \pmod{\langle ft \rangle}$, the multigraph $\Cay \bigl( G/ \langle ft \rangle ; S \bigr)$ has double edges, and it is clear that all of its hamiltonian cycles use at least one of these double edges (since $\langle ft, s \rangle \neq G$). Therefore \cref{MultiDouble} applies.
 
  \item Suppose $t \in P$ (and $\langle t \rangle \notnormal G$, so \cref{2p3-sNormal} does not apply). 
  We may assume $s = xz$ and $t = y z^k$ for some $k \not\equiv 0 \pmod{p}$ (because $\langle s \rangle$ and $\langle t \rangle$ are not normal).
  We have $(t^{-1}f)^2 = z^{-2k} \in Z(G)$. Since $\langle s, t \rangle \cap Z(G) = \{e\}$, it is therefore clear that $\langle s,t \rangle \cap \langle t^{-1}f \rangle = \{e\}$, so all of the elements of $\langle s, t \rangle$ are in different right cosets of $\langle t^{-1}f \rangle$. Since $(s^{p-1},t)^p\#$ is a hamiltonian path in $\Cay \bigl( \langle s,t \rangle; \{s,t\} \bigr)$, this implies that all of the vertices in this path are in different right cosets of $\langle t^{-1}f \rangle$.

Then, since $|t^{-1} f| = 2p = |G|/| \langle s,t \rangle|$, \cref{FGL(notnormal)} tells us that 
	$ \bigl( (s^{p-1},t)^p\#, f \bigr)^{2p} $
is a hamiltonian cycle in $\Cay(G;S)$.
 \end{itemize}

\begin{subcase}
 Assume ${P}$ is nonabelian of
exponent~${p^2}$.
 \end{subcase}
 We have
 $$ P = \bigl\langle x,y \mid x^{p^2} = y^p = e, \text{$[x,y] = x^p$ is central}
\bigr\rangle . $$
Since $\langle [P,P], y \rangle$ is the unique elementary abelian subgroup of order~$p^2$ in~$G$, it must be normalized by~$f$. Thus, $\langle y \rangle$ must be in an eigenspace of the action of~$f$ on $P/[P,P]$, so we may assume $y^f \in \{y^{\pm 1}\}$. Also, by choosing $\langle x \rangle$ to also be in an eigenspace, we may assume $x^f \in \langle x \rangle$.

Since $\Aut \bigl( \langle x \rangle \bigr)$ is abelian, and $y$ acts nontrivially on~$\langle x \rangle$, we know that $y$ is not in the commutator subgroup of $\langle f, y \rangle$. So $f$ cannot invert~$y$. Therefore $f$ centralizes~$y$. Thus, $[G,G] \subset \langle x \rangle$, so $[G,G]$ is cyclic of prime-power order, so \cref{KeatingWitte} applies.

\begin{subcase}
 Assume ${P}$ is nonabelian of exponent~${p}$.
 \end{subcase}
  We have
 $$ P = \bigl\langle x,y, z \mid x^p = y^p = z^p = e, \text{$z = [x,y]$ is central} \bigr\rangle .$$
 We may assume $S \cap \langle z \rangle = \emptyset$, for otherwise \cref{NormalEasy} applies.

 \begin{subsubcase}
 Assume $|S| = 2$.
 \end{subsubcase}
 We have $S = \{a,s\}$.

 \begin{subsubsubcase}
 Assume $a^2 \neq e$.
 \end{subsubsubcase}
 We may assume $a^2 \notin [P,P]$ (otherwise \cref{DoubleEdge} applies), so there is no harm in assuming $a^2 = y^2$ (and $a$ obviously centralizes $\langle a^2 \rangle = \langle y\rangle$).
 
Note that, since $a \in fP$, the elements $a$ and~$f$ have the same action on $P/[P,P]$, and they have the same action on $Z(P)$. 
 
 Since $f$ acts as an automorphism of order~$2$ on $P/[P,P]$, and does not centralize $P$, it must act nontrivially on $P/[P,P]$, so $-1$ must be an eigenvalue of this action. Thus, we may assume $x^f \in x^{-1} [P,P]$. 
 Then, since $[x^{-1} , y] = [x,y]^{-1}$ (because $[x,y] \in Z(P)$), we have
 	\begin{align} \label{2p3pf-aInvertsP'}
	 z^f = [x,y]^f = [x^f, y^f] = [x^{-1} , y] = [x,y]^{-1} = z^{-1} ,
	 \end{align}
so $f$ inverts $\langle z \rangle$. Since $x^f \in x^{-1} \langle z \rangle$ (and $\langle x, z \rangle$ is abelian), this implies that 
	$$ \text{$f$ inverts $\langle x, z \rangle$.} $$
(So $x^f = x^{-1}$, $y^f = y$, and $z^f = z^{-1}$.)
Therefore, replacing $a$ by an appropriate conjugate, we may assume $a = fy$.

 \begin{itemize}
 \item If $s^a \in s^{-1}[P,P]$, then we may assume $s = x$. 
Also, since $[x,y] = z \in Z(P)$, we see that
	$$ x x^a = x x^{fy} = x (x^{-1})^y = [x^{-1},y] = [x,y]^{-1} = z^{-1} $$
generates $\langle z \rangle$. Hence, the path
	$$ \bigl( x^{-(p-1)}, a^{-1}, x^{-(p-1)},a \bigr)^{p}\# $$
visits all of the elements of $\langle x, z \rangle \cup a^{-1} \langle x, z \rangle$, so all of the vertices in this path are in different right cosets of $\langle y \rangle$. Then, since $a^{-2} = y^{-1}$ generates $\langle y \rangle$, \cref{FGL(notnormal)} tells us that 
	$$  \Bigl( \bigl( x^{-(p-1)}, a^{-1}, x^{-(p-1)},a \bigr)^{p}\#, a^{-1} \Bigr)^p $$
is a hamiltonian cycle in $\Cay(G;S)$.

 \item If $s^a \notin s^{-1}[P,P]$, then we may write $s = xy^\ell$ with $\ell \not\equiv 0 \pmod{p}$, and we may assume $\ell \not\equiv 1 \pmod{p}$, by replacing $a$ and~$y$ with their inverses if necessary.

Since $s \equiv x \pmod{\langle y, z \rangle}$, and $a$ inverts~$x \pmod{\langle y, z \rangle}$, it is clear that $(s^{p-1}, a)^2$ is a hamiltonian cycle in $\Cay \bigl( G/\langle y, z \rangle; S \bigr)$. Then, since the product $(s^{p-1}a)^2 = y^{2-2\ell}z^{\ell-1}$ generates $\langle y, z \rangle / \langle y \rangle$, \cref{FGL(skewgen)} tells us that $(s^{p-1}, a)^{2p}$ is a hamiltonian cycle in the quotient $\langle y \rangle \backslash \Cay( G; S)$. 
But 
	$$a^{-1} = a^{-2} a = y^{-2} a \in \langle y \rangle a, $$
so the final edge of this hamiltonian cycle is a multiple edge in the quotient. Thus, \cref{MultiDouble} provides a hamiltonian cycle in $\Cay(G;S)$.
\end{itemize}

 \begin{subsubsubcase}
 Assume $a^2 = e$.
 \end{subsubsubcase}
 We may assume $a = f$.
 Since $\langle s, s^f, [P,P] \rangle \normal G$, we have 
 	$$G = \langle f, s \rangle = \langle f \rangle \langle s, s^f, [P,P] \rangle ,$$
so $\{s,s^f\}$ must generate $P/[P,P]$. So $s^f \notin \langle s \rangle [P,P]$, which implies that the action of~$f$ on $P/[P,P]$ has two distinct eigenvalues (both $1$ and~$-1$), and that $s$ is not in either of these eigenspaces. Thus, we may assume $s = xy$ with $x^f = x^{-1}$ and $y^f = y$. Note that, from the calculation of \pref{2p3pf-aInvertsP'}, we also know $z^f = z^{-1}$. 

We claim that
	$$ \bigl( (s^{p-1}, f)^{2p-2}, (s^{-(p-1)}, f)^2 \bigr)^p $$
is a hamiltonian cycle in $\Cay(G;S)$. This walk is obviously of the correct length, and is closed (because $P$ has exponent~$p$), so we need only show that it visits all of the elements of~$G$.

We have
	\begin{align*}
	(s^{p-1}f)^{2p-2} (s^{-(p-1)}f)^2 &= (s^{-1}f)^{-2}(sf)^2 = y^4 , \\
	(s^{p-1}f)^2 &= (s^{-1}f)^2 = y^{-2} z , \\
	s &= xy 
	, \end{align*}
so the walk visits all vertices of the form
	$ (y^4)^i (y^{-2}z)^j(xy)^k$.
That is, it visits all of the vertices in~$P$.

Also, note that the first vertex of $fP$ visited is $s^{p-1} f$, and we have
	$$(s^{p-1}f)^{2p-3}(s^{-(p-1)}f)^2(s^{p-1}f) = (s^{-1}f)^{-3}(sf)^2(s^{-1}f) = y^4z^{-4} , $$
so the walk visits all vertices of the form
	\begin{align} \label{2p3pf-PcupfP}
	 (s^{p-1} f) \cdot (y^4z^{-4})^i (y^{-2}z)^j (xy)^k
	 \end{align}
 with $0 \le j \le p-2$.
In addition, since 
	\begin{align*}
	 (s^{p-1} f)^{2p-3}(s^{-(p-10}f) 
	 &= (s^{-1}f)^{-3}(sf) 
	 \\&= y^4 x^{-2} z^{-4} 
	\\& = (y^4 z^{-4})(y^{-2}z)^{-1}(xy)^{-2} 
	 \\&\in \langle y^4 z^{-4} \rangle (y^{-2}z)^{p-1} \langle xy \rangle
	 , \end{align*}
it also visits the vertices of the form \pref{2p3pf-PcupfP} with $j = p-1$. Thus, the walk visits all of the vertices in $fP$.

So the walk visits all of the vertices in $P \cup fP = G$, as claimed.

 \begin{subsubcase} \label{2p3-expp-S=3}
 Assume $|S| = 3$.
 \end{subsubcase}
Because $S$ is minimal, we must have $a^2 \in [P,P]$, so $\langle a^2 \rangle \normal G$.
So we may assume $a^2 = e$ (otherwise \cref{DoubleEdge} applies). Thus, we may assume $a = f$. We may also assume $s = x$, so we write $S = \{f,x,t\}$.

\begin{subsubsubcase}
Assume $t \in P$.
\end{subsubsubcase}
We may assume $S = \{f, x, y\}$.

\begin{itemize}
\item Suppose $x^f \in \{x^{\pm1}\}$.
From \cref{pk}, we know there is a hamiltonian cycle $(s_i)_{i=1}^{p^3}$ in $\Cay \bigl(P; \{x,y\} \bigr)$. We may assume $s_{p^3} = x^{-1}$. There is also a hamiltonian cycle $(t_i)_{i=1}^{p^3}$ in $\Cay \bigl( P; \{x,y\} \bigr)$, such that $t_{p^3} = x^f$. Then
	$$ \bigl( (s_i)_{i=1}^{p^3}\#, f, (t_i)_{i=1}^{p^3}\#, f \bigr) $$
is a hamiltonian cycle in $\Cay(G;S)$, because it traverses all the vertices in $P$, then all of the vertices in $fP$, and the final vertex is
	$$ (s_1s_2\cdots s_{p^3})(s_{p^3}^{-1} f) (t_1t_2\cdots t_{p^3})(t_{p^3}^{-1} f)
	= (e)(x^{-1} f)(e)(x^f f) 
	= e .$$

\item Suppose $x^f \notin \{x^{\pm1}\}$ and $y^f \notin \{y^{\pm1}\}$.
Because $S$ is minimal, we know that $x^f \in \langle x, [P,P] \rangle$ and $y^f \in \langle y, [P,P] \rangle$. Since $1$ cannot be the only eigenvalue of~$f$ on $P/[P,P]$, this means we may assume $x^f \in x^{-1}[P,P]$ (by interchanging $x$ and~$y$ if necessary).

We claim that $f$ inverts $P/[P,P]$. If not, then $f$ centralizes~$y \pmod{[P,P]}$, so, from the calculation of \pref{2p3pf-aInvertsP'}, we see that $f$ inverts $[P,P]$. This implies that $f$ does not centralize any element of $\langle x, z \rangle$, so it must invert all of these elements. This contradicts the assumption that $x^f \notin \{x^{\pm1}\}$.

The above claim implies that
	$\langle y x^{-1}, z \rangle \normal G$,
and that $(x^{p-1},f)^{2}$ is a hamiltonian cycle in 
	$ \Cay \bigl(G/ \langle y x^{-1}, z \rangle ; \{f,x\} \bigr) $.
Also note that $(x^{p-1}f)^2 \in [P,P]$, and we have
	$$ xyx^{p-3}fx^{p-1}f
	= \bigl( xyx^{-2}  \bigr)(x^{p-1}f)^2
	= \bigl( (yx^{-1}) [yx^{-1},x^{-1}]  \bigr)(x^{p-1}f)^2
	. $$
Thus, either $xyx^{p-3}fx^{p-1}f$ or $(x^{p-1}f)^2$ generates $\langle y x^{-1}, [P,P] \rangle / \langle yx^{-1} \rangle$, depending on whether $(x^{p-1}f)^2$ is trivial or not. Hence, \cref{FGL(skewgen)} tells us that either
	$$ \text{$\bigl( x,y,x^{p-3},f,x^{p-1},f \bigr)^p$ or $(x^{p-1},f)^{2p}$} $$
is a hamiltonian cycle in $\langle y x^{-1} \rangle \backslash \Cay(G;S)$. Then, since $y \in \langle yx^{-1} \rangle x$, the first edge of the hamiltonian cycle is doubled in the quotient multigraph, so \cref{MultiDouble} provides a hamiltonian cycle in $\Cay(G;S)$.
\end{itemize}

\begin{subsubsubcase}
Assume $t \notin P$.
\end{subsubsubcase}
This is very similar to the preceding argument.
Write $t = fy$, so $S = \{f, fy, x\}$. 

From the argument at the start of \cref{2p3-expp-S=3}, we see that we may assume $(fy)^2 = e$. Therefore $f$ inverts~$y$, so $\langle y, [P,P] \rangle \normal G$. 

Also, since the minimality of $S$ tells us $\langle f, x \rangle \neq G$, we know $x^f \in \langle x, [P,P] \rangle$, so there exists $\epsilon \in \{\pm 1\}$ such that $x^f \in x^\epsilon [P,P]$. Then it is easy to see that
$(f, x^{-(p-1)}, f, x^{\epsilon(p-1)})$ is a hamiltonian cycle in 
	$ \Cay \bigl( G / \langle y, z \rangle ; \{f,x\} \bigr) $.
For 
	$$z_1 = (f)(x^{-(p-1)})(f) (x^{\epsilon(p-1)}) = fxfx^{-\epsilon} \in [P,P] ,$$
we have
	$$ f(x^{-(p-1)})(fy) (x^{\epsilon(p-1)})
	= fx(fy) x^{-\epsilon}
	= fxfx^{-\epsilon} y [y,x^{-\epsilon}]
	= z_1 y [y,x^{-\epsilon}] 
	. $$
Thus, either $(f)(x^{-(p-1)})(fy) (x^{\epsilon(p-1)})$ or $(f)(x^{-(p-1)})(f) (x^{\epsilon(p-1)})$ generates $\langle y, [P,P] \rangle / \langle y \rangle$, depending on whether $z_1$ is trivial or not. Hence, either
	$$ \text{$\bigl( f, x^{-(p-1)}, fy, x^{\epsilon(p-1)} \bigr)^p$ or $\bigl( f, x^{-(p-1)}, f,  x^{\epsilon(p-1)} \bigr)^p$} $$
is a hamiltonian cycle in $\langle y \rangle \backslash \Cay \bigl( G; \{f, fy, x\} \bigr)$. Then, since $fy \in \langle y \rangle f$ (recall that $f$ inverts~$y$), the first edge of the hamiltonian cycle is doubled in the quotient multigraph, so \cref{MultiDouble} provides a hamiltonian cycle in $\Cay(G;S)$.
 \end{proof}


\section{Groups of order $18p$} \label{18pSect}

\begin{prop} \label{18p}
  If\/ $|G| = 18p$, where $p$ is prime, then every connected Cayley graph on~$G$ has
  a hamiltonian cycle.
\end{prop}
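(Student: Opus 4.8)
The plan is to argue by induction on $|G|$, so that \cref{2p2} may be applied to the quotient by any normal subgroup isomorphic to~$\integer_p$. First I dispose of the small primes: if $p=2$ then $|G|=36=4\cdot 3^2$, so \cref{4p2} applies, and if $p=3$ then $|G|=54=2\cdot 3^3$, so \cref{2p3} applies. Hence I may assume $p\ge 5$; I let $S$ be a minimal generating set of~$G$ and let $P\iso\integer_p$ be a Sylow $p$-subgroup.

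The first key step is to prove that $P\normal G$. For $p\notin\{5,17\}$ this is immediate from Sylow's Theorem \pref{NoMod1->Normal}, since no divisor $k>1$ of~$18$ satisfies $k\equiv 1\pmod p$. For $p\in\{5,17\}$ the only other possibility forces $N_G(P)$ to be abelian (of order~$15$ when $p=5$, and of order~$17$ when $p=17$), so $P\le Z\bigl(N_G(P)\bigr)$; then Burnside's normal $p$-complement theorem produces a normal subgroup~$H$ of order~$18$ with $G=H\rtimes P$. Since $P\not\normal G$, the group~$P$ would act nontrivially on~$H$, forcing $p\mid|\Aut H|$; but the automorphism group of every group of order~$18$ has order of the form $2^a3^b$, so this is impossible. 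Therefore $P=\integer_p\normal G$ in all cases. The Sylow $3$-subgroup~$P_3$ then normalizes~$\integer_p$, so $M=P_3\integer_p$ is a normal subgroup of order~$9p$ and index~$2$, and (as $|M|$ is odd) there is an involution $f\notin M$ with $G=\langle f\rangle\ltimes M$.

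Now $G/\integer_p$ has order $18=2\cdot 3^2$, so $\Cay(G/\integer_p;S)$ has a hamiltonian cycle by \cref{2p2}. If some element of~$S$ lies in~$\integer_p$, then \fullcref{NormalEasy}{p} finishes the proof at once (its hypotheses hold because $|\integer_p|$ is prime); so I may assume $S\cap\integer_p=\emptyset$. The plan is then to lift a suitable hamiltonian cycle from the quotient, i.e.\ to find one whose product of labels generates~$\integer_p$, and feed it to the Factor Group Lemma \pref{FGL}. Two large families are handled immediately: if $[G,G]$ is a cyclic prime power then \cref{KeatingWitte} applies, and if $G$ is of dihedral type then, since the abelian subgroup has order $9p$ (a product of three primes), \cref{DihType} applies, with \cref{dihedral} covering the cyclic case $G\iso D_{18p}$. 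When $M$ is abelian, a coprime-action decomposition of~$f$ on~$M$ (writing $M$ as the direct product of its fixed part and its inverted part $[G,G]$) would show that the only groups escaping these two families are $\integer_p\times\bigl(\integer_2\ltimes(\integer_3\times\integer_3)\bigr)$ and $\integer_3\times D_{6p}$, together with the analogous groups in which $M=\integer_p\rtimes P_3$ is nonabelian.

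For these residual groups I would build the lift explicitly. When two elements of $S\cup S^{-1}$ are congruent modulo~$\integer_p$, \cref{DoubleEdge} applies; otherwise I would exhibit a hamiltonian cycle of the quotient multigraph $\integer_p\backslash\Cay(G;S)$ that traverses a double edge and invoke \cref{MultiDouble}, or recognize $\Cay(G/\integer_p;S)$ as a M\"obius ladder or prism and invoke \cref{VoltageCor}. I expect this residual analysis to be the main obstacle: it is exactly the kind of delicate, case-by-case construction of hamiltonian cycles with prescribed voltage that appears in the treatments of groups of order $3p^2$ and $4pq$. The groups with $[G,G]\iso\integer_3\times\integer_3$ (non-cyclic) and those with $[G,G]$ cyclic but not a prime power (such as $\integer_{3p}$), as well as the nonabelian-$M$ variants where $[G,G]$ mixes the prime~$p$ with a $3$-group, will each require their own explicit cycles, and verifying that the chosen quotient cycle can be arranged to have a label-product generating~$\integer_p$ is the delicate point.
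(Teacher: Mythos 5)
Your structural setup is sound and essentially parallels the paper's: you reduce to $p\ge 5$, establish $\integer_p\normal G$ (the paper does this via solvability and Hall subgroups rather than your Sylow-plus-Burnside count, but both work), write $G=H_{18}\ltimes\integer_p$, and dispose of the cases where $[G,G]$ is a cyclic prime power (\cref{KeatingWitte}) or $G$ is of dihedral type (\cref{DihType}, \cref{dihedral}). Up to that point the argument is correct.

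However, there is a genuine gap: everything after that point is a plan, not a proof. You write that for the residual groups you ``would build the lift explicitly,'' that you ``expect this residual analysis to be the main obstacle,'' and that verifying the label-product generates $\integer_p$ ``is the delicate point'' --- but that residual analysis \emph{is} the proof. Knowing that $\Cay(G/\integer_p;S)$ is hamiltonian (via \cref{2p2}) does not give you a hamiltonian cycle whose label-product generates $\integer_p$, and no general lemma in the paper supplies one. In the paper, the residual case $H_{18}\iso D_6\times\integer_3$ alone splits into three groups ($D_{6p}\times\integer_3$, $D_6\times(\integer_3\ltimes\integer_p)$, and $(D_6\times\integer_3)\ltimes\integer_p$ with both factors acting nontrivially), and each of these requires an exhaustive enumeration of the minimal generating sets of sizes $2$, $3$, and $4$ together with an explicitly exhibited cycle for each, verified through \cref{FGL}, \cref{FGL(notnormal)}, \cref{Stud71}, \cref{DoubleEdge}, or \cref{MultiDouble}; the same is true (more briefly) for the case where $H_{18}$ is of generalized dihedral type and centralizes $\integer_p$. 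None of that work appears in your proposal, so the statement is not actually proved.
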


\begin{proof}
Let $S$ be a minimal generating set of~$G$.
  We may assume $p \ge 5$. (Otherwise either $|G| = 36 = 4 \cdot 3^2$, so \cref{4p2} applies, or  $|G| = 54 = 2 \cdot 3^3$, so \cref{2p3} applies.)

Note that $G$ is solvable (for example, this follows from the fact that
$|G| = 2 \times \mbox{odd}$, but can be proved quite easily), so  \cref{Hall-solv} tells us $G$ has
a Hall subgroup~$H_{18}$ of order~$18$, and also
has a Hall subgroup $H_{9p}$ of order~$9p$. Now
$H_{9p}$, being of index two, is normal in~$G$. Also, from Sylow's
Theorem \pref{NoMod1->Normal}, we see that the Sylow $p$-subgroup~$\integer_p$ is normal
(hence, characteristic) in $H_{9p}$. So $\integer_p$ is normal in~$G$.
Therefore $G = H_{18} \ltimes \integer_p$.

We may assume $H_{18}$ is nonabelian (otherwise \cref{KeatingWitte}
applies), so $H_{18}$ is either $D_{18}$, $\integer_2 \ltimes
(\integer_3
\times \integer_3)$ (dihedral type), or $D_6 \times Z_3$.

\setcounter{case}{0}

\begin{case}
  Assume ${H_{18} \iso D_{18}}$.
  \end{case}
  Then either $[G,G] = \integer_9$ (so \cref{KeatingWitte} applies) or $G
\iso D_{18p}$ (so \cref{dihedral} applies).

\begin{case}
  Assume ${H_{18} = \integer_2 \ltimes (\integer_3 \times \integer_3)}$ is
of dihedral type.
  \end{case}
  We may assume $G$ is not of dihedral type (otherwise
\cref{DihType}
applies), so $G = H_{18} \times \integer_p$. Let $s$ be an element
of~$S$ whose order is divisible by~$p$.

\begin{subcase}
Assume $|s| = p$.
\end{subcase}
Then $s \in \integer_p = Z(G)$, so \cref{NormalEasy} applies.

\begin{subcase}
Assume $|s| = 2p$.
\end{subcase}
Since $|G|$ is the product of only four primes, and $|s|$ is divisible by two of them, it is clear that $|S| \le 3$. On the other hand, it is clear that $G/ \integer_p \iso H_{18}$ has no $2$-element generating set (because two elements of order~$2$ always generate a dihedral group). Therefore $S$ is a minimal generating set of $G/ \integer_p$. Then, since $s$ and~$s^{-1}$ give a double edge in $\Cay(G/\integer_p;S)$, \cref{DoubleEdge} applies.

\begin{subcase}
Assume $|s| = 3p$.
\end{subcase}
  Let $f$ be a generator of $\integer_2$, $x$ and $y$ be generators
of $\integer_3 \times \integer_3$
  and $z$ be a generator of $\integer_p$. We may assume $S$
does not contain any elements of order~$p$ or~$2p$ (otherwise, a
preceding
case applies). We may also assume $S$ does not contain any elements of
order~$3$ (else \cref{NormalEasy} applies with $N = \integer_p$). Thus, each element
of~$S$ has order $2$ or~$3p$, so there are only two cases to
consider:

\begin{subsubcase}
Assume $S = \left\{ f, fx, yz \right\}$.
\end{subsubcase}
 Since $e$, $yz$ and $(yz)^2$ are in different right cosets of 
 $\langle (yz)^2 f \rangle = \langle fy, z \rangle$, \cref{FGL(notnormal)} tells us that
 	$$ \bigl( (yz)^{2}, f \bigr)^{2p} \#$$
 is a hamiltonian path in the subgraph induced by $\langle fy, z \rangle$. Therefore, all of the vertices of this path are in different right cosets of $\langle x \rangle$. So \cref{FGL(notnormal)} tells us that
 	$$ \Bigl( \bigl( (yz)^{2}, f \bigr)^{2p} \#, fx \Bigr)^3 $$
is a hamiltonian cycle in $\Cay(G;S)$.
 
  \begin{subsubcase}
Assume $S = \left\{ f, yz, xz^k \right\}$, with $k \not\equiv 0 \pmod{p}$.
\end{subsubcase}
  We may assume $k \not\equiv 3 \pmod{p}$ (by replacing $xz^k$ with its inverse, if necessary).
  Since $G/\langle xy^{-1}, z \rangle \iso D_6$, it is easy to see that
  	$$ \bigl( xz^k, yz, f, (yz)^2, f \bigr) $$
is a hamiltonian cycle in $\Cay \bigl( G/ \langle xy^{-1}, z \rangle; S \bigr)$.
Then, since 
	$$ (xz^k)(yz)(f)(yz)^2(f) = x y^{-1} z^{k + 3} $$
generates  $\langle xy^{-1}, z \rangle$, \cref{FGL} tells us that
  $$ \bigl( xz^k, yz, f, (yz)^2, f \bigr)^{3p}$$
  is a hamiltonian cycle in $\Cay(G ; S)$.

\begin{case}
  Assume ${H_{18} = D_6 \times \integer_3}$.
  \end{case}
We let $f$ and $x$ generate $D_6$, where $f^2 = x^3 = e$ and $x^f
= x^{-1}$. We let
$y$ generate $\integer_3$ and we let $z$ generate $\integer_p$.
Note that $y^f = y^x = y$, and that $z^x = z$ (since $x$ is in the commutator subgroup of~$H$).
  We may assume $H_{18}$ does not centralize $\integer_p$ (otherwise
\cref{KeatingWitte} applies). This implies that
$C_G(\integer_p)/[G,G]$ is a proper subgroup of $G/[G,G] \iso \integer_2
\times \integer_3$, so there are three possibilities for~$G$:
  \begin{itemize}
  \item $G = D_{6p} \times \integer_3$,
  or
  \item $G = D_6 \times (\integer_3 \ltimes \integer_p)$,
  or
  \item $G = (D_6 \times \integer_3) \ltimes \integer_p)$, where $D_6$
and $\integer_3$ both act nontrivially on~$\integer_p$.
  \end{itemize}
  In each case, since $Z(G) \cap \langle x , z \rangle = \{e\}$, we may assume
	$$S \cap \langle x , z \rangle = \emptyset$$
(else \cref{NormalEasy} applies).

\begin{subcase}
  Assume ${G = D_{6p} \times \integer_3}$.
  \end{subcase}
This implies $z^f = z^{-1}$ and $z^y = z$.
There exists an element of the form $fx^iy^jz^k$ in $S$.

\begin{subsubcase}
Assume there exists an element of the form $fx^iyz^k$ in $S$.
\end{subsubcase}
Conjugating by powers of $x$ and~$z$, we can assume $fy \in S$.
Now $(fy)^2 = y^2$
and thus, since $\langle y \rangle \normal G$, either
\cref{DoubleEdge} applies or
$S - \{fy\}$ generates $G / \langle y \rangle$. Assume
the latter. This clearly implies
that another element of the form $fx^iy^jz^k$ is in $S$ and that $|S| \ge
3$. 
Since the index of $\langle f y \rangle$ is $3p$, which has only two prime factors,
we conclude that $|S| = 3$. Thus, the minimality of~$S$ implies that precisely one of
$i$ and $k$ must be zero. 

Since the minimality of~$S$ implies no element of the form
$xy^{j'}z$ is in $S$, and since neither $\{fxy^{j'},
y^{j''}z\}$ nor $\{fy^{j'}z, xy^{j''}\}$ generates $G /
\langle y \rangle$, it follows
that $S = \{fy, fxy^{j}, fy^{j'}z\}$. But since $S$ is
minimal, we must have $j = j' = 0$. Thus
$S = \{fy, fx, fz\}$.
Now
	\begin{itemize}
	\item $(fz,fx)^{3p}\#$ is a hamiltonian path in $\Cay \bigl( \langle f,x,z \rangle ; \{fz, fx\} \bigr)$, so all the vertices in this path are obviously in different right cosets of $\langle x^{-1} y \rangle$,
	and
	\item $\bigl( (fz) (fx) \bigr)^{3p} (fx)^{-1} (fy) = x^{-1} y$ obviously generates $\langle x^{-1} y \rangle$,
	\end{itemize}
so \cref{FGL(notnormal)} implies that 
	$$ \bigl( (fz, fx)^{3p} \#, fy \bigr) ^3 $$
is a hamiltonian cycle in $\Cay(G;S)$. 

\begin{subsubcase}
Assume that $S$ does not contain any element of the form $fx^iy^jz^k$ with $j \not\equiv 0 \pmod{3}$.
\end{subsubcase}
Then we can assume
$f \in S$. There must be an element of  the form $x^iyz^k$ in $S$. Note
that we can assume
that at least one of $i$ and~$k$ is nonzero for otherwise
\cref{NormalEasy} applies.

\begin{subsubsubcase}
Assume $i$ and $k$ are both nonzero. 
\end{subsubsubcase}
Then we can assume $S = \{f, xyz\}$.
Since $(xyz)^{3p-1}$ is a hamiltonian path in $\Cay \bigl( \langle xy, z \rangle ; \{xyz\} \bigr)$, it is clear that all of the vertices in this path are in different right cosets of $\langle fxz , y \rangle = \langle (xyz)^{3p-1} f \rangle$. So \cref{FGL} tells us that
$ \bigl( (xyz)^{3p-1} , f \bigr)^6 $ is a hamiltonian cycle in $\Cay(G;S)$.

\begin{subsubsubcase}
Assume $i \neq 0$ and $k = 0$.
\end{subsubsubcase}
 We can assume $xy
\in S$. Then, since $S \cap \langle x,z \rangle = \emptyset$,  the only candidates
for the third element of~$S$ are $yz$ and $fx^iz$.
	\begin{itemize}
	\item Suppose $yz \in S$. Note that every element of the abelian group $\langle x, y, z \rangle$ can be written uniquely in the form $(yz)^i (xy)^{-j}$, where $0 \le i < 3p$ and $0 \le j < 3$, so it is easy to see that
	$$ \bigl( (yz)^{3p-1}, (xy)^{-1}, (yz)^{-(3p-1)}, (xy)^{-1}, (yz)^{3p-1} \bigr) $$
is a hamiltonian path in $\Cay \bigl( \langle x, y, z \rangle; \{xy, yz\} \bigr)$. Thus, letting
	$$  g = (yz)^{3p-1} (xy)^{-1} (yz)^{-(3p-1)} (xy)^{-1} (yz)^{3p-1} f
	= x z^{-1} f ,$$
it is clear that all of the vertices of this path are in different right cosets of $\langle g \rangle$ (since $|g| = 2$). Therefore \cref{FGL(notnormal)} tells us that
	$$ \bigl( (yz)^{3p-1}, (xy)^{-1}, (yz)^{-(3p-1)}, (xy)^{-1}, (yz)^{3p-1}, f \bigr)^2 $$
is a hamiltonian cycle in $\Cay(G;S)$.

	\item Suppose $fx^iz \in S$. We may assume $i \not\equiv 0 \pmod{3}$, for otherwise $f \equiv fx^iz \pmod{\langle z \rangle}$, so \cref{DoubleEdge} applies with $N = \langle z \rangle$. Then we may assume $i = 1$ (by replacing $x$ and~$xy$ by their inverses, if necessary). So $S = \{ f, fxz, xy \}$. Now, $(xy)^2 f (xy)^{-2}
fxz = x^2 z$, which generates
the normal cyclic subgroup $\langle x,z \rangle = \langle xz\rangle$. Then, since
	$$1, xy, x^2y^2, fxy^2, fy, fx^2$$
lie in different cosets of the subgroup $\langle xz \rangle$, \cref{FGL} implies that
$((xy)^2,f,(xy)^{-2},fxz)^{3p}$ is a hamiltonian cycle in $\Cay(G;S)$.
	\end{itemize}

\begin{subsubsubcase} \label{SubSubSubi=0andknot0}
Assume $i = 0$ and $k \neq 0$. 
\end{subsubsubcase}
This means $yz \in S$.
We may assume the third element of~$S$ does not belong to $\langle x, y, z \rangle$ (otherwise a previous \lcnamecref{SubSubSubi=0andknot0} applies, since $S \cap \langle x, z \rangle = \emptyset$).
Then the third element of~$S$ must be of the form $fxz^k$.
It is easy to see that 
$$ \text{$\bigl( (yz)^2, f, (yz)^{-2}, fxz^k \bigr)$ 
\quad and \quad
$\bigl( (yz)^{-2}, f, (yz)^2, fxz^k\bigr)$} $$
are hamiltonian cycles in $\Cay \bigl( G/ \langle x,z \rangle; S \bigr)$. Since one or the other of
	$$ \text{$(yz)^2 f (yz)^{-2} (fxz^k) = xz^{k+4}$
	\quad and \quad
	$(yz)^{-2} f (yz)^{2} (fxz^k) = xz^{k-4}$} $$
generates $\langle x,z \rangle$ , we see from \cref{FGL} that either
$$ \text{$\bigl( (yz)^2, f, (yz)^{-2}, fxz^k \bigr)^{3p}$ 
\quad or \quad
$\bigl( (yz)^{-2}, f, (yz)^2, fxz^k\bigr)^{3p}$} $$
is a hamiltonian cycle in $\Cay(G;S)$.

\begin{subcase}
  Assume ${G = D_6 \times (\integer_3 \ltimes \integer_p)}$.
  \end{subcase}
Note that this implies $z^f = z$ and $z^y =
z^r$, where
$r^3 \equiv 1\pmod{p}$ and $r \neq 1$.
  (We must have $p \equiv 1 \pmod{3}$.)

Suppose there exists $s \in S$ whose projection to the second factor is
 a
nontrivial element of~$\integer_p$. We may assume the first component is
a reflection (otherwise it generates a normal subgroup $\langle x^i z
\rangle$
which clearly has a trivial intersection with the center of $G$, so
\cref{NormalEasy}
applies). That is, $s = fx^iz$. Clearly we can assume $s = fz \in
S$ (conjugate by a power of~$x$).
Then $s$ yields a double edge in $G/\integer_p$, so, by \cref{DoubleEdge}, we may
assume $S - \{s\}$ generates $G/\integer_p$. From the minimality of~$S$, we know $\langle S - \{s\} \rangle \neq G$, so we conclude that $\langle S - \{s\} \rangle = D_6 \times \integer_3$ (or a conjugate). Furthermore, since $|G|$ is the product of only four primes, and $|s|$ is divisible by two of them, we know $|S| \le 3$. Therefore, some element~$t$ of $S - \{s_1\}$ must project nontrivially to both $D_6$ and $\langle y\rangle$. Since $\langle s , t \rangle \neq G$, the projection of $t$ to~$D_6$ must be~$f$, so we may assume $t = fy$. Then the final element of $S$ must be of the form $f x^i$, with $i \not\equiv 0 \pmod{3}$. Therefore, we may assume $S = \{fz, fy, fx\}$.
In this case, \cref{Stud71} applies with $s_1 = fy$ and $s_2
= fz$, because $s_1 s_2 = yz$ has order~$3$, and $\langle S - \{s_1\} \rangle = \langle f, x, z \rangle$ has order~$6p$.

We may now assume that
	\begin{align} \label{ProjNotZp}
	\text{the projection of a generator to the second
factor is never a nontrivial element of~$\integer_p$.} 
	\end{align}

\begin{subsubcase}
  Assume ${|S| = 2}$.
  \end{subsubcase}
The generating set of $\integer_3 \ltimes \integer_p$ must be of the form $\{y, yz\}$, and the 
generating set of~$D_6$ is either two reflections or a rotation and a reflection. We now discuss each of the possibilities individually:

\begin{subsubsubcase}
  Assume $S = \{fy, fxyz\}$. 
  \end{subsubsubcase}
  Since $fy$ generates the cyclic group $G/ \langle x z \rangle$, it is obvious that $ \bigl( (fy)^5, fxyz \bigr)$ is a hamiltonian cycle in $\Cay \bigl( G/ \langle xz \rangle ; S \bigr)$. Then, since
  	$ (fy)^5 (fxyz) = xz $,
\cref{FGL} tells us that $\bigl( (fy)^5, fxyz \bigr)^{3p}$ is a hamiltonian cycle in $\Cay(G;S)$.

\begin{subsubsubcase}
  Assume $S = \{fy, xyz\}$. 
    \end{subsubsubcase}
Much as in the previous paragraph, it is easy to see that
	 $$ \bigl( (xyz)^2, (fy)^{-1}, (xyz)^{-2}, fy \bigr) $$
is a hamiltonian cycle in $\Cay \bigl( G/ \langle xz \rangle ; S \bigr)$. Therefore, since
	$$ (xyz)^2 (fy)^{-1} (xyz)^{-2}(fy)
	= x^4 z^{r-1} $$
generates $\langle xz \rangle$, \cref{FGL} tells us that
	$$ \bigl( (xyz)^2, (fy)^{-1}, (xyz)^{-2}, fy\bigr)^{3p }$$
is a hamiltonian cycle in $\Cay(G;S)$.

\begin{subsubcase}
  Assume ${|S| = 3}$.
  \end{subsubcase}

\begin{subsubsubcase}
  Assume ${S \cap \bigl( D_6 \times \integer_p \bigr) \neq \emptyset}$.
  \end{subsubsubcase}
  Then (from \pref{ProjNotZp} and the fact that $S \cap \langle x ,z \rangle = \emptyset$) we may assume $f \in S$. There must be an element
whose projection to both $D_6$ and $\integer_3 \ltimes \integer_p$ is
nontrivial. Since by assumption $S$ does not contain any element of the form
$f^\ell x^iz^k$ with $k \neq 0$, we are left with two possibilities.
  \begin{itemize}

  \item Assume $fxy \in S$. Because $S$ generates~$G$, the third
element of~$S$ must be of the form $f^\ell x^iyz$ (or its inverse).
Since $S$ is
minimal, this element must either be $yz$ or $fxyz$. Thus,
$S$ is either $\{f, fxy, yz \}$ or $\{f, fxy, fxyz \}$.
In either case, taking
$s_1 = f$ and $s_2 = fxy$ we get that $s_1s_2 = xy$ is of order
$3$ and $\langle S - \{s_1\}\rangle = \langle fx, y, z \rangle$ 
 is of order $6p$, so
clearly \cref{Stud71} applies.

  \item Assume $xy \in S$.  Since $S$ generates~$G$, the third
element
of~$S$ must be of the form $f^\ell x^iyz$ (or its inverse).
Since $S$ is minimal, we must have $\ell = 0$. There
are three Cayley graphs to consider:
  \begin{itemize}
  \item Suppose $S = \{f, xy, yz\}$. Taking
$s_1  = (xy)^{-1} = x^2y^2$ and $s_2 = yz$, we see that
  $s_1s_2 = x^2z$ is of order $3p$ and clearly $|\langle S
- \{s_1^{-1}\}\rangle| = 6$. So \cref{Stud71} applies.
  \item Suppose $S =  \{f, xy, xyz \}$.
Taking $s_1 = (xy)^{-1} = x^2y^2$ and $s_2 = xyz$, we see that $s_1s_2 =
z$ is of order $p$, and  $\langle S - \{s_1^{-1}\}\rangle  = \langle f, x, yz \rangle$ has order~$18$. So
\cref{Stud71} applies.
  \item Suppose $S = \{f, xy, x^2yz\}$. 
 Since $G/ \langle x,z \rangle$ is abelian, it is easy to see that
 	$$ \bigl(f, (xy)^{-2}, f, xy, x^2yz \bigr) $$
is a hamiltonian cycle in $\Cay \bigl( G/ \langle x,z \rangle ; S \bigr)$.
Therefore, since
	$$ (f) (xy)^{-2} (f) (xy) (x^2yz) = x^2 z $$
generates $\langle xz \rangle$, \cref{FGL} tells us that
	$$\bigl(f, (xy)^{-2}, f, xy, x^2yz \bigr)^{3p} $$
is a hamiltonian cycle in $\Cay(G;S)$.
  \end{itemize}
  \end{itemize}

\begin{subsubsubcase}
  Assume ${S \cap \bigl( D_6 \times \integer_p \bigr) = \emptyset}$.
  \end{subsubsubcase}
  We must
have an element of the form $fx^iyz^k$ in $S$, so we can assume 
	$$fy \in S .$$
In order to generate $D_6$, the set
$S$ must also contain an element of the form $fxy^jz^k$ or
$xy^jz^k$.  Furthermore, the assumption of this paragraph implies $j \neq 0$, so we may assume $j = 1$, by passing to the inverse if necessary.

  \begin{itemize}

  \item Suppose $fxyz^k \in S$. Since $S$ is minimal and $|S| =
3$, we must have $k=0$; that is, $fxy
\in S$.
Then, because $S$ generates~$G$, the third element
of~$S$ must be of the form $f^\ell x^iyz$. If $i \neq 0$, then
$\langle fy, f^\ell x^iyz \rangle = G$, while if $i = 0$ and $\ell = 1$,
then $\langle fxy, fyz\rangle = G$. These conclusions contradict the minimality of~$S$,
 so there is only one Cayley graph to consider: we have $S = \{fy, fxy, yz\}$. 
 Taking $s_1 = fxy$ and $s_2 = (fy)^{-1} = fy^2$, we get
that $s_1s_2 = x^2$ is of order $3$, and $\langle S -
\{s_1\}\rangle = \langle f, y, z \rangle$ is clearly of order $6p$. So \cref{Stud71}
applies.

  \item Suppose $xyz^k \in S$. Since $S$ is minimal and $|S| = 3$
we must have $k = 0$, so $xy \in S$. Then, because $S$ generates~$G$, the third
element
of~$S$ must be of the form $f^\ell x^iyz$. If $i \neq 0$, then
$\langle fy, f^\ell x^iyz \rangle = G$, while if $i = 0$ and $\ell = 1$,
then $\langle xy, fyz\rangle = G$. So there is only
one Cayley graph to consider:
  we have $S = \{fy, xy, yz\}$.
Taking $s_1 = fy$ and $s_2 = (xy)^{-1} = x^2y^2$, we get that
$s_1s_2 = fx^2$ is of order $2$ and $\langle S
- \{s_1\}\rangle = \langle x, y, z \rangle$ is of order~$9p$. So \cref{Stud71} applies.
   \end{itemize}

\begin{subsubcase}
  Assume ${|S| = 4}$.
  \end{subsubcase}
 Since $|G| = 18p$ is the product of only four prime factors, the order of the subgroup generated by any two elements of~$S$ must be the product of only two prime factors. It is easy to see that this implies every element of~$S$ belongs to either $D_6 \times \{e\}$ or $\{e\} \times (\integer_3 \ltimes \integer_p)$. Therefore, $\Cay(G;S)$ is isomorphic to
  $$\Cay(D_6;S_1) \times \Cay(\integer_3 \ltimes \integer_p;S_2) .$$
  Since the Cartesian product of hamiltonian graphs is hamiltonian, we conclude that $\Cay(G;S)$ has a hamiltonian cycle.

\begin{subcase}
  Assume ${G = (D_6 \times \integer_3) \ltimes \integer_p}$, where $D_6$ and $\integer_3$ both act nontrivially on~$\integer_p$.
  \end{subcase}
  (Note that we must have $p \equiv 1 \pmod{3}$.)
  This implies $z^f = z^{-1}$ and $z^y =
z^{r}$ where $r^3 \equiv 1 \pmod{p}$ (but $r \not\equiv 1 \pmod{p}$).

\begin{subsubcase} \label{D6xZ3xZpPf-S=2}
  Assume ${|S| = 2}$.
  \end{subsubcase}

\begin{subsubsubcase}
    Assume $S \cap \langle f, x, z \rangle = \emptyset$.
  \end{subsubsubcase}
The generating set~$S$ must contain an element of the form $f x^i y^j z^k$. By assumption, we must have $j \neq 0$, so we may assume $j = 1$. Then, conjugating by an element of $\langle x, z \rangle$, we may assume $fy \in S$.

To generate $G$, the second element of~$S$ must be of the form $f^\ell x y^{j'} z$. By assumption, we must have $j' \neq 0$, so we may assume $j' = 1$.  Therefore,  there are only two possibilities, and we discuss each of them individually:

  \begin{itemize}
  \item Suppose $S = \{fy, xyz\}$. Since
  $\bigl( (fy)^{-1}, (xyz)^{-2}, fy, (xyz)^2\bigr)$ is a hamiltonian cycle in $\Cay \bigl( G/\langle x, z \rangle ; S \bigr)$, and
  	$$ \text{$(fy)^{-1}(xyz)^{-2}(fy)(xyz)^2 = x z^{(r+1)^2}$ generates $\langle x, z \rangle$} , $$
\cref{FGL} tells us $\bigl( (fy)^{-1}, (xyz)^{-2}, fy, (xyz)^2\bigr)^{3p}$ is a
hamiltonian cycle in $\Cay(G;S)$.
 
   \item Suppose $S = \{fy, fxyz\}$. 
   Since $fxyz \equiv fy \pmod{\langle x,z \rangle}$, it is obvious that $\bigl( (fy)^5, fxyz \bigr)$ is a hamiltonian cycle in $\Cay \bigl( G/\langle x, z \rangle ; S \bigr)$. Then, since $(fy)^5(fxyz) = xz$ generates $\langle x, z \rangle$, \cref{FGL} tells us that
   $\bigl( (fy)^5, fxyz \bigr)^{3p}$ is a hamiltonian cycle in $\Cay(G;S)$.
  \end{itemize}

\begin{subsubsubcase}
  Assume $S \cap \langle f, x, z \rangle \neq \emptyset$.
  \end{subsubsubcase}
  Since $S \cap \langle x, z \rangle = \emptyset$, we must have $S \cap f \langle x, z \rangle \neq \emptyset$. Then, conjugating by an element of $\langle x, z \rangle$, we may assume $f \in S$.
To generate $G$, the second element of~$S$ must be of the form $f^\ell x y z$.

  \begin{itemize}
  \item Suppose $S =  \{f, xyz\}$. Since
  $\bigl( f, (xyz)^{-2}, f, (xyz)^2 \bigr)$ is a hamiltonian cycle in $\Cay \bigl( G/\langle x, z \rangle ; S \bigr)$, and
  	$$ \text{$f (xyz)^{-2} f (xyz)^2 = x z^{2(r+1)}$ generates $\langle x, z \rangle$} , $$
\cref{FGL} tells us that $\bigl( f, (xyz)^{-2}, f, (xyz)^2 \bigr)^{3p}$ is a
hamiltonian cycle in $\Cay(G;S)$.

  \item Suppose $S =  \{f, fxyz\}$.
  We may assume $4r \not\equiv -5 \pmod{p}$ (by replacing $y$ with its inverse, if necessary). 
  Let 
 	$$(s_i)_{i=1}^{18} 
	= \bigl( fxyz, f, (fxyz)^{-2}, f, (fxyz)^{-3}, f, (fxyz)^3, f, (fxyz)^2, f, (fxyz)^{-1}, f \bigr) .$$
Using the fact that $r^2 + r + 1 \equiv 0 \pmod{p}$, we calculate that the vertices of this walk are:
	\begin{align*}
	 e, \ 
	 &fxy z, \ 
	  x^2y z^{-1}, \ 
	  f x^2 z^{-2r-2}, \ 
	  x^2 y^2 z^{-3r-1}, \ 
	  fxy^2 z^{3r+1}, \ 
	  y z^{-3}, \ 
	  \\& fx z^{-4r-4}, \ 
	  y^2 z^{-5r-1}, \ 
	  fy^2 z^{5r+1}, \ 
	 x z^{4r+6}, \ 
	  fy z^{5-2r}, \ 
	  xy^2 z^{-7r-1}, \ 
	  \\& fx^2y^2 z^{7r+1}, \ 
	  x^2 z^{6r+8}, \ 
	  fx^2 y z^{7-2r}, \ 
	  xy z^{2r-7}, \ 
	  f z^{-8r-10}, \ 
	   z^{8r+10} 
	 . \end{align*}
Then, by modding out $\langle z \rangle$, we see that this walk visits the vertices of $G/\integer_p \iso D_6 \times \integer_3$ in the order
	$$ e, fxy, x^2y, f x^2 , x^2 y^2, fxy^2, y, fx, y^2, fy^2, x, fy, xy^2, fx^2y^2, x^2, fx^2 y, xy, f, e ,$$
so it is a hamiltonian cycle in 
  $\Cay( G/\integer_p ; S)$. 
  Furthermore, from our assumption that $4r \not\equiv -5 \pmod{p}$, we see that the final vertex $z^{8r+10}$ is not trivial in~$G$, so it generates $\langle z \rangle$.
Therefore \cref{FGL} provides a hamiltonian cycle in $\Cay(G;S)$.
  \end{itemize}

\begin{subsubcase}
  Assume ${|S| = 3}$.
  \end{subsubcase}

\begin{subsubsubcase}
  Assume some element of~${S}$ has order~${6}$.
  \end{subsubsubcase}
  Then $S$ contains $fy$ (or a conjugate). The only proper subgroups
of~$G$ that properly contain $fy$ are $\langle f \rangle \times (\integer_3
\ltimes \integer_p)$ and $D_6 \times \langle y \rangle$. Thus, recalling
the assumption that $S \cap \langle x, z \rangle = \emptyset$:
  \begin{itemize}
  \item the second generator can be assumed to be $yz$,
$fz$, or $fyz$,
  and
  \item the third generator can be assumed to be $fx$, $fxy$, or
$xy$.
  \end{itemize}

  We consider each possible choice of the second generator.

  \begin{enumerate} \renewcommand{\theenumi}{\alph{enumi}}
  \item If $yz \in S$, then any of the possible third generators can
be used:
  \begin{itemize}
  \item Suppose $S = \{fy, yz, fx\}$.
Taking $s_1 = fy$ and $s_2 = fx$ we get that $s_1s_2 = xy$ is
of order $3$ and since $\langle S - \{s_1\}\rangle = \langle fx, y, z \rangle$ has order $6p$, it is easy to see that \cref{Stud71} applies.
  \item Suppose $S = \{fy, yz, fxy\} $. 
  Taking $s_1 = fy$ and $s_2 = fxy$ we get that $s_1s_2 =
xy^2$ is of order $3$ and since $\langle S -
\{s_1\}\rangle = \langle f x, y, z \rangle$ has order~$6p$, it is easy to see that \cref{Stud71} applies.
  \item Suppose $S = \{fy, yz, xy\}$.
Taking $s_1 = fy$ and $s_2 = (xy)^{-1}$ we get that $s_1s_2 =
fx^2$ is of order $2$ and since $\langle S -
\{s_1\}\rangle = \langle x, y, z \rangle$ has order~$9p$, it is easy to see that  \cref{Stud71} applies.
  \end{itemize}

  \item If $fz \in S$, then, because $\langle fz, fxy \rangle =
G = \langle fz, xy \rangle$, there is only one possibility for the third
generator: we have $S = \{fy, fz, fx\}$.
Taking $s_1 = fx$ and $s_2 = fy$ we get that $s_1s_2 = x^2y$ is
of order $3$ and since $\langle S -
\{s_1\}\rangle = \langle f, y, z \rangle$ has order~$6p$, it is easy to see that \cref{Stud71} applies.

  \item If $fyz \in S$, then, because
   $\langle f y z, f x \rangle$, $\langle f y z, f x y \rangle$, and $\langle f y z, x y \rangle$ are all equal to~$G$, 
   none of the possible third generators yield a minimal generating set of~$G$. So there are no Cayley graphs to consider in this case.
  \end{enumerate}

\begin{subsubsubcase}
  Assume no element of~${S}$ has order~${6}$.
  \end{subsubsubcase}
  Then $S$ contains $f$ (or a conjugate). There must be an element
of~$S$ that does not belong to $D_6 \times \integer_p$ (that is, an
element of the form $f^\ell x^i y z^k$). Because there is no
element of order~$6$, we must have $\ell = 0$, so
the possibilities are:
  $y$, $yz$, $xy$, and $xyz$. However, we eliminate the
last option, because $\langle f, xyz \rangle = G$.

We consider each of the remaining possibilities:
  \begin{enumerate} \renewcommand{\theenumi}{\alph{enumi}}

  \item Suppose $y \in S$. The third generator must involve both $x$
and~$z$. Since $\langle f, xyz \rangle = G$ and since we
assumed $S \cap \langle x, z \rangle = \emptyset$, there is only one
possibility, namely, $S = \{f, y, fxz\}$. 
Taking $s_1 = fxz$ and $s_2 = f$ we get that $s_1s_2 =
x^2z^{-1}$ is of order $3p$ and since $\langle S
- \{s_1\}\rangle = \langle f, y \rangle$ has order~$6$, it is easy to see that \cref{Stud71} applies.

  \item Suppose $yz \in S$. The third generator must involve~$x$.
Since $S$ does not contain an element of order~$6$, or any element of $\langle x, z \rangle$, and $\langle f, xyz^k \rangle = G$ for $k \neq 0$,
 the only possibilities are $f x z^k$ and $x y$.
 
  \begin{itemize}
  \item Suppose $S =  \{f, yz, fxz^k\}$. Note that, because
  	$$ 2(r^2 + r) + 2(r+1) = 2(r+1)^2 \not\equiv 0 \pmod{p} ,$$
it cannot be the case that $k + 2(r^2 + r)$ and $k - 2(r+1)$ are both~$0$ modulo~$p$.
Therefore, $\langle x, z \rangle$ is generated by either 
  	$$\text{$(yz)^2(f) (yz)^{-2} (fxz^k) = x z^{k + 2(r^2 + r)}$
	or $(yz)^{-2} (f) (yz)^2 (fxz^k)  = x z^{k - 2(r +1)}$}  , $$
so \cref{FGL} tells us that either
 $$ \text{$\bigl( (yz)^2, f, (yz)^{-2}, fxz^k \bigr)^{3p}$ or
  $ \bigl( (yz)^{-2}, f, (yz)^2, fxz^k \bigr)^{3p}$} $$
 is a hamiltonian cycle in $\Cay(G;S)$.

  \item Suppose $S = \{f, yz, xy \}$. Since 
  	$ (xy)^{-2} f (yz)^2 f = x z^{-(r+1)} $
generates $\langle x, z \rangle$, \cref{FGL} tells us that
  $$ \bigl( (xy)^{-2}, f, (yz)^2, f \bigr)^{3p}$$ is a
hamiltonian cycle in $\Cay(G;S)$.
  \end{itemize}

  \item Suppose $xy \in S$. The third generator must involve~$z$.
  However, $\langle xy, fx^i y^j z \rangle = G$, and $\langle f, x^iyz \rangle$ is also equal to~$G$ if $i \neq 0$. 
  Since there is no element of the
form $x^iz^k$ in $S$, this implies that the
only possibility for the third generator is $yz$, so $S = \{f,
xy, yz\}$, but this generating set was already considered in the 
preceding paragraph. 
\end{enumerate}

\begin{subsubcase}
  Assume ${|S| = 4}$.
  \end{subsubcase}
  Some 3-element subset~$S'$ of~$S$ must generate $G/\integer_p$. Then,
because $S$ is minimal, we must have $\langle S' \rangle = D_6 \times
\integer_3$ (or a conjugate). Since $S'$ must be minimal, and $S' \cap
\langle x \rangle = \emptyset$, we must
have $S' = \{ f, fx, y \}$. 

Now the final element of~$S$ must be of the form $f^\ell x^i y^j z$. Since $S \cap \langle x,z \rangle = \emptyset$, we know that $\ell$ and~$j$ cannot both be~$0$.
	\begin{itemize}
	\item If $\ell \neq 0$, then either $\langle fx, y,  f^\ell x^i y^j z \rangle = G$, 
	or  $\langle f, y,  f^\ell x^i y^j z \rangle = G$, depending on whether $i$~is~$0$ or not.
	\item If $j \neq 0$, then $\langle f, fx, f^\ell x^i y^j z \rangle = G$.
	\end{itemize}
These conclusions contradict the minimality of~$S$, so there are no Cayley graphs to consider in this case.
\end{proof}

\begin{ack}
We are grateful to D.\,Jungreis, E.\,Friedman, and J.\,A.\,Gallian for making the unpublished manuscript \cite{JungreisFriedman} available to us. We also thank two anonymous referees for their helpful comments.

D.\,W.\,M.\ was partially supported by a research grant from the Natural Sciences and Engineering Research Council of Canada.
K.\,K., D.\,M., and P.\,\v S.\ were partially supported by Agencija za raziskovalno dejavnost Republike Slovenije, research program P1-0285.
\end{ack}



\begin{thebibliography}{00}


\bibitem{AlspachGP} 
B.~Alspach:
 The classification of hamiltonian generalized Petersen
graphs, \emph{J. Combin. Theory B} \textbf{34} (1983), 293-312.

\bibitem{Alspach-lifting}
  B.~Alspach:
 Lifting Hamilton cycles of quotient graphs,
\emph{Discrete Math.} 78  (1989),  
 25--36.

\bibitem{AlspachChenDean}
B.~Alspach, C.~C.~Chen, and M.~Dean:
Hamilton paths in Cayley graphs on generalized dihedral groups,
\emph{Ars Math. Contemp.} 3 (2010), no.~1, 29--47. 


\bibitem{ChenQuimpo-hamconn}
C.~C.~Chen and N.~Quimpo:
On strongly hamiltonian abelian group graphs, 
in K.~L.~McAvaney, ed.:
\emph{Combinatorial Mathematics VIII (Proceedings, Geelong,
Australia 1980),} Springer-Verlag, Berlin, 1981, pp.~23--24.

\bibitem{ChenQuimpo-pq}
C.~C.~Chen and N.~Quimpo:
Hamiltonian Cayley graphs of order $pq$,
in:
\emph{Combinatorial mathematics, X (Adelaide, 1982)}, 
Springer, Berlin, 1983, pp.~1--5.

\bibitem{CurranGallian-survey}
S.~J.~Curran and J.~A.~Gallian:
Hamiltonian cycles and paths in Cayley graphs and digraphs---a survey, 
\emph{Discrete Math.} 156 (1996) 1--18.

\bibitem{CurranMorris2-16p}
S.~J.~Curran, J.~Morris, and D.~W.~Morris:
Cayley graphs of order $16p$ are hamiltonian
(preprint).%
\hfil\ \penalty1000\hfilneg\ 
\href{http://arxiv.org/abs/1104.0081}{http://arxiv.org/abs/1104.0081}

\bibitem{Gallian-AlgText}
J.~A.~Gallian:
\emph{Contemporary Abstract Algebra, 6th edition,}
Houghton Mifflin, Boston, 2006.

\bibitem{GhaderpourMorris-27p}
E.~Ghaderpour and D.~W.~Morris:
Cayley graphs of order $27p$ are hamiltonian 
(preprint).%
\hfil\ \penalty1000\hfilneg\ 
\href{http://arxiv.org/abs/1101.4322}{http://arxiv.org/abs/1101.4322}

\bibitem{GhaderpourMorris-30p}
E.~Ghaderpour and D.~W.~Morris:
Cayley graphs of order $30p$ are hamiltonian 
(preprint).%
\hfil\ \penalty1000\hfilneg\ 
\href{http://arxiv.org/abs/1102.5156}{http://arxiv.org/abs/1102.5156}

\bibitem{Gorenstein-FinGrps}
D.~Gorenstein:
\emph{Finite Groups,}
Chelsea, New York, 1980.

\bibitem{GrossTucker}
J.~L.~Gross and T.~W.~Tucker:
\emph{Topological Graph Theory,}
Wiley, New York, 1987.

\bibitem{Hall-ThyGrps}
M.~Hall:
\emph{The Theory of Groups,}
Macmillan, New York, 1959.

\bibitem{JungreisFriedman} D.~Jungreis and E.~Friedman:
 Cayley graphs on groups of low order are hamiltonian
 (unpublished).

\bibitem{KeatingWitte} K.~Keating and D.~Witte: On Hamilton cycles
in Cayley graphs with cyclic commutator subgroup. 
\emph{Ann. Discrete Math.} 27
(1985) 89--102.

\bibitem{M2Slovenian-A5}
K.~Kutnar, D.~Maru\v si\v c, J.~Morris, D.~W.~Morris, and P.~\v Sparl: 
Cayley graphs on $A_5$ are hamiltonian
(unpublished).%
\hfil\penalty1000\hfilneg\ 
\href{http://arxiv.org/abs/1009.5795/}{http://arxiv.org/abs/1009.5795/anc/A5.pdf}

\bibitem{KutnarSparl} 
K.~Kutnar and P.~\v Sparl:
Hamilton paths and cycles in vertex-transitive graphs of order $6p$, 
\emph{Discrete Math.} 309  (2009),  no.~17, 5444--5460.

\bibitem{Li-pqr}
D.~Li:
Cayley graphs of order $pqr$ are Hamiltonian (Chinese),
\emph{Acta Math. Sinica} 44 (2001), no.~2, 351--358.

\bibitem{LockeWitte}
 S.~C.~Locke and D.~Witte:
 Flows in circulant graphs of odd order are sums of Hamilton cycles,
 \emph{Discrete Math.} 78 (1989) 105--114.
 
 \bibitem{Morris-2genNilp}
 D.~W.~Morris:
 $2$-generated Cayley digraphs on nilpotent groups have hamiltonian paths
 (preprint).%
\hfil\penalty1000\hfilneg\ 
\href{http://arxiv.org/abs/1103.5293}{http://arxiv.org/abs/1103.5293}

\bibitem{PakRadoicic-survey}
I.~Pak and R.~Radoi\v ci\'c:
Hamiltonian paths in Cayley graphs,
\emph{Discrete Math.} 309 (2009) 5501--5508.

\bibitem{Savage-GraySurvey}
C.~Savage:
A survey of combinatorial Gray codes,
\emph{SIAM Review} 39 (1997), no.~4, 605--629. 

\bibitem{Wielandt-FinitePermGrps}
H.~Wielandt:
\emph{Finite Permutation Groups,}
Academic Press, New York,1964.

\bibitem{Witte-CayDiags} D.~Witte: On Hamiltonian circuits in Cayley
diagrams. 
\emph{Discrete Math.} 38 (1982) 99--108.

\bibitem{Witte-pn} D.~Witte:  Cayley digraphs of prime-power order are
hamiltonian. 
\emph{J. Comb. Th. B} 40 (1986) 107--112.

\bibitem{WitteGallian-survey}
D.~Witte and J.~A.~Gallian:
 A survey: Hamiltonian cycles in Cayley graphs, 
 \emph{Discrete Math.} 51 (1984) 293--304.
 
\makeatletter
\immediate\write\@auxout{\string\setcounter{saveBibCtr}{\the\value{\@listctr}}}
\makeatother

\end{thebibliography}
\end{document}